\documentclass[11pt]{article}
\usepackage{amsthm}
%%%%%%%%%%%
\makeatletter
\def\th@plain{%
  \thm@notefont{}% same as heading font
  \itshape % body font
}
\def\th@definition{%
  \thm@notefont{}% same as heading font
  \normalfont % body font
}
\makeatother
%%%%%%%%%%%
\usepackage{amsmath}
\usepackage{amsfonts}
\usepackage{latexsym}
\usepackage{amssymb}
\usepackage{graphicx}
\usepackage{multirow}
\usepackage{soul}
\usepackage[normalem]{ulem}
\usepackage{tabularx,ragged2e}
\newcolumntype{C}{>{\Centering\arraybackslash}X} % centered "X" column
\usepackage{caption}
\usepackage{subcaption}
\usepackage{float}
\usepackage{xcolor}
\usepackage{pdfpages}

\newtheorem{theorem}{Theorem}[section]
\newtheorem{assumption}[theorem]{Assumption}
\newtheorem{corollary}[theorem]{Corollary}
\newtheorem{definition}[theorem]{Definition}
\newtheorem{example}[theorem]{Example}
\newtheorem{lemma}[theorem]{Lemma}

\newtheorem{remark}[theorem]{Remark}
\newtheorem*{remarknonumber}{Remark}

\numberwithin{equation}{section}
\usepackage[a4paper]{geometry}
\geometry{left={2cm}, right={2cm}, top={2cm}, bottom={2.4cm}}
%\geometry{left={3cm}, right={3cm}, top={3cm}, bottom={3cm}}

%
%\parskip   1ex
%\parsep    .5ex

%\usepackage{txfonts}

% %%%%%%%%%%%%%%%% added by Ivan %%%%%%%%%%%%%%%%%%%%%%%%%%%

%\usepackage[show]{ed}
%\usepackage[hide]{ed}

\newcommand{\beq}{\begin{equation}}
\newcommand{\eeq}{\end{equation}}
\newcommand{\cClocal}{\mathcal{C}_{\mathrm{local}}}
\newcommand{\hmaxell}{h_{\ell}}

\newcommand{\Ccont}{C_{\mathrm{cont}}}
\newcommand{\Ccontell}{C_{\mathrm{cont}, \ell}}

\newcommand{\tVhl}{{\mathcal{V}}^h_\ell}

\newcommand{\cI}{{\mathcal I}}

\newcommand{\cS}{{\mathcal S}}
\newcommand{\cT}{{\mathcal T}}

\newcommand{\cV}{{\mathcal V}}

\newcommand{\bx}{\boldsymbol{x}}

\newcommand{\bV}{\mathbf{V}}
\newcommand{\bR}{\mathbf{R}}

\newcommand{\bW}{\mathbf{W}}

\newcommand{\bu}{\mathbf{u}}

\newcommand{\bn}{\mathbf{n}}

\newcommand{\bff}{\mathbf{f}}

%\mathbf{\alpha}}
\newcommand{\supp}{\mathrm{supp}}

\newcommand{\bd}{\mathbf{d}}

\newcommand{\Rea}{\mathbb{R}}
\newcommand{\Com}{\mathbb{C}}

\newcommand{\bbC}{\mathbb{C}}
\newcommand{\abs}{\varepsilon}
\newcommand{\eps}{\varepsilon}

\newcommand{\ri}{{\rm i}}
\newcommand{\rd}{{\rm d}}

% %%%%%%%%%%%%%%%% end of added by Ivan %%%%%%%%%%%%%%%%%%%%%%%%%%%
%%%%%%%%%%%%%%%%%%%%%%%%%%%%%%%%%%%%%%%%%%%%%%%%%%%%%%%%%%%%%%%%
%%%%%%%%%%%%%%%%added by Euan%%%%%%%%%%%%%%%%%%%

%%%%%%%%%%%%%%
\usepackage{color}
\usepackage{hyperref}
\definecolor{myblue}{rgb}{0,0,0.6}
\definecolor{darkgreen}{rgb}{0,0.5,0}
\hypersetup{colorlinks=true,
linkcolor=myblue,citecolor=myblue,filecolor=myblue,urlcolor=myblue}

\definecolor{escol}{rgb}{0,0,0.8}
\definecolor{estcol}{rgb}{0,0.5,0}
\definecolor{esnewcol}{rgb}{0,0.5,0}

%%%%%% Change both of these to black to remove colour %%%%%%
\newcommand{\igg}[1]{{\color{black}{#1}}}
\newcommand{\es}[1]{{\color{black}{#1}}}

\newcommand{\beqs}{\begin{equation*}}
\newcommand{\eeqs}{\end{equation*}}
\newcommand{\bit}{\begin{itemize}}
\newcommand{\eit}{\end{itemize}}
\newcommand{\ben}{\begin{enumerate}}
\newcommand{\een}{\end{enumerate}}
\newcommand{\bal}{\begin{align}}
\newcommand{\eal}{\end{align}}
\newcommand{\bals}{\begin{align*}}
\newcommand{\eals}{\end{align*}}
\newcommand{\bre}{\begin{remark}}
\newcommand{\ere}{\end{remark}}
\newcommand{\bpf}{\begin{proof}}
\newcommand{\epf}{\end{proof}}
\newcommand{\ble}{\begin{lemma}}
\newcommand{\ele}{\end{lemma}}
\newcommand{\bco}{\begin{corollary}}
\newcommand{\eco}{\end{corollary}}
\newcommand{\bex}{\begin{example}}
\newcommand{\eex}{\end{example}}
\newcommand{\bth}{\begin{theorem}}
\newcommand{\enth}{\end{theorem}}

\newcommand{\mymatrix}[1]{\mathsf{#1}}
\newcommand{\MA}{{\mymatrix{A}}}
\newcommand{\MX}{{\mymatrix{X}}}
\newcommand{\MB}{{\mymatrix{B}}}
\newcommand{\MD}{{\mymatrix{D}}}
\newcommand{\MR}{\widetilde{\mymatrix{R}}}
\newcommand{\ntMR}{\mymatrix{R}}

\newcommand{\MS}{{\mymatrix{S}}}
\newcommand{\MM}{{\mymatrix{M}}}
\newcommand{\MN}{{\mymatrix{N}}}
\newcommand{\SPD}{{\mathsf{SPD}}}

\newcommand{\domain}{\Omega}

\newcommand{\tfa}{\text{ for all }}

\newcommand{\ton}{\text{ on }}
\newcommand{\tas}{\text{ as }}
\newcommand{\tand}{\text{ and }}

\newcommand{\gu}{\nabla u}

\newcommand{\bnu}{\boldsymbol{\nu}}
\newcommand{\Ctr}{C_{\rm tr}}
\newcommand{\Cchi}{{C_{\chi}}}
\newcommand{\Cchiell}{C_{\chi, \ell}}
\newcommand{\CPi}{{C_{\Pi}}}
\newcommand{\Cint}{{C_{\rm int}}}
\newcommand{\Cintell}{C_{\rm int, \ell}}
\newcommand{\Csol}{C_{\rm sol}}
\newcommand{\Csolell}{C_{\rm sol, \ell}}
\newcommand*{\N}[1]{\left\|#1\right\|}

\newcommand{\sign}{{\rm sign}}
\newcommand{\bxi}{\boldsymbol{\xi}}
\newcommand{\Om}{\Omega}
\newcommand{\noi}{\noindent}
\newcommand{\tendi}{\rightarrow\infty}
\newcommand{\rI}{\mathrm{I}}

%%%%%%%%%%%%%%%%%%%%%%%%%%%%%%%%
%%%%%%%%%%%%%%%%end of added by Euan
%%%%%%%%%%%%%%%%%%%%%%%%%%%%%%%%
\newtheorem{experiment}[theorem]{Experiment}
\usepackage[outdir=./]{epstopdf}
\usepackage{verbatim}

%\graphicspath{{./pic/}}
%\usepackage{refcheck}

\title{Domain decomposition preconditioners  for high-order discretisations of  the  heterogeneous Helmholtz  equation }
\author{Shihua Gong, Ivan G.~Graham and Euan A.~Spence, 
  \\[2ex]
  {\tt sg2328@bath.ac.uk, I.G.Graham@bath.ac.uk, eas25@bath.ac.uk}
\\[2ex]
Department of Mathematical Sciences, University of Bath, Bath BA2
7AY, UK.}
\date{\today}

\begin{document}
\maketitle

\begin{abstract}
 We consider  one-level additive Schwarz domain decomposition preconditioners for the   Helmholtz equation with  variable coefficients (modelling wave propagation in heterogeneous media), 
 subject to  boundary conditions that include wave scattering problems.
   Absorption is included as a parameter in the problem.  This  problem is  discretised using $H^1$-conforming nodal  finite elements of fixed  local degree $p$ on meshes with diameter $h = h(k)$,  chosen so that the error remains bounded with increasing  $k$.   The action of the one-level preconditioner
  consists of the parallel solution of problems on subdomains (which can be of 
  general geometry), each equipped with an  impedance  boundary condition. We prove rigorous estimates
  on the norm and field of values of the left- or right-preconditioned matrix that  show explicitly how the absorption, the heterogeneity in the coefficients and the dependence on the degree enter the estimates.
These estimates  prove rigorously that,   with enough absorption and for $k$ large enough, 
  GMRES is guaranteed to converge in a number of iterations that is independent of  $k,p,$ and the coefficients.    The theoretical threshold for $k$  to be large enough depends on $p$  and on the \emph{local} variation of coefficients in subdomains
  (and not globally).
  Extensive numerical experiments are given for both the absorptive and the propagative cases;
  in the latter case we investigate examples both when the coefficients are nontrapping and when they are trapping.
  These experiments (i) support our  theory  in terms of dependence on polynomial degree and  the coefficients; 
  (ii) support the sharpness of our  field of values estimates in terms of the level of absorption required. \\
  
\noi{\bf MSC2010 classification}: 65N22, 65N55, 65F08, 65F10, 35J05\\

\noi{\bf Keywords}: Preconditioning, Helmholtz equation, High Frequency, Variable Coefficients, High Order  Elements, Domain Decomposition

\end{abstract}

%\tableofcontents
\begin{center}
\textbf{Dedicated to the memory of John W.~Barrett}
\end{center}

\section{Introduction}  \label{sec:Intro} 

\subsection{
Preconditioning    the Helmholtz equation}

Motivated by  the large range of applications,  there is currently great interest in designing and analysing preconditioners for finite element discretisations of the Helmholtz equation
\begin{align} \label{eq:HetHelm}
 \nabla \cdot (A\gu) + k^2 n\, u =-f,
  \end{align}  
 on a $d-$dimensional domain  ($d = 2,3$),  with $A$ and $n$ describing (possibly varying)
material properties,  and $k$ the (possibly large) angular frequency.
The discrete  systems 
are large (because  at least $\mathcal{O}(k^d)$ degrees of freedom  are  needed to resolve the oscillatory  solution), non self-adjoint
(because of the radiation condition present in scattering problems), and indefinite. They
are therefore notoriously difficult to solve and  many ``standard'' preconditioning techniques that are motivated by positive-definite problems are unusable in practice.

While there are a large  number of groups actively working on preconditioning Helmholtz problems (see the discussion in \S \ref{subsec:brief}),
enjoying  substantial algorithmic success backed up by physically-based insight,  there remains
limited rigorous analysis on this topic.

The main theoretical aim of our work is to analyse a preconditioner (based on a Helmholtz-related modification of a classical method for elliptic  problems)  that  is additive (and can be applied in massively parallel computing environments), and for which (under appropriate assumptions) the number of GMRES iterations is provably  independent of both $k$ and  the  degree of the underlying finite element  method; the proof requires that a certain amount of absorption is introduced into \eqref{eq:HetHelm}.
%\ednote{Euan}

The action of our preconditioner involves parallel solution of subproblems
that, although   substantially smaller than the global problem,  can still be costly to solve by sparse direct methods.
The fast resolution of these subproblems is not a topic of this paper, but
multilevel approaches for these have been   discussed in \cite{GrSpZo:18},
although rigorous analysis of these  remains an open question.

We consider linear systems that arise from discretisation of \eqref{eq:HetHelm} by $H^1$-conforming nodal finite element methods of polynomial order
$p\geq 1$ on shape-regular meshes of diameter $h = h(k)$ chosen to avoid the ``pollution effect'', i.e.,  to  ensure that  the finite element error remains bounded as $k \rightarrow \infty$ (explained later in Remark \ref{rem:pollution}).    
We are  concerned with  one-level additive-Schwarz domain decomposition  preconditioners, where the local problems have impedance boundary conditions on the subdomain boundaries and the action of the preconditioner combines the local problems additively using a partition of unity; see \S\ref{sec:preconditioner}  for a precise definition. 
\igg{The  preconditioner we  analyse is the one-level part of a two level preconditioner (called  ``OBDD-H'')  first proposed (and studied empirically for  moderate $k$) by Kimn and Sarkis in \cite{KiSa:07}.}
%\cite[\S7.7.1]{DoJoNa:15} \cite{HaJoNa:17} 

The domain decomposition consists of a family of shape-regular overlapping subdomains of {characteristic length scale}
$H$ and overlap determined by parameter $\delta \leq H$. The overlap is assumed to be \emph{finite}, i.e.
any point in the domain $\Omega$ belongs to no more than a fixed number $\Lambda$ of subdomains for all $h,H$. 
 For our main theorems in \S \ref{sec:main_results}, our assumptions are
%\begin{align}\label{eq:basic} 
\beqs
  \text{(a)} \ \  kh \rightarrow 0  \quad \text{and} \quad \text{(b)} \ \  k\delta  \rightarrow \infty, \quad \text{as} \quad k \rightarrow \infty,   
  \eeqs
%\end{align}
The condition (a) is naturally satisfied when we choose the mesh fine enough to avoid the   pollution effect, 
whereas condition (b) requires that the overlaps of the subdomains have to contain a (possibly slowly) increasing number of wavelengths as $k$ increases. 
% \ednote{Removed: ``Many of our preliminary estimates are true when \eqref{eq:basic} is replaced with the weaker
% assumption  $kh \leq 1$.''}

In the following two sections we explain the background and the novel contributions of the current  paper. Since an
up-to-date survey of related work is contained in \cite{GrSpZo:18}, we restrict here to  a  brief literature
survey in \S \ref{subsec:brief}. 

\subsection{One-level additive-Schwarz  preconditioners: overview and previous work}\label{sec:1.2}

In the  paper \cite{GrSpZo:18},  the same preconditioner as considered here was analysed 
for  discretisations of the homogeneous  Helmholtz equation with absorption: 
\beq\label{eq:homo}
\Delta u +(k^ 2 + \ri \eps) u =-f,  
\eeq
where $0\leq \vert \eps \vert \leq k^2 $ is the absorption parameter and
the case  $\eps = 0$ is   referred to as the \emph{propagative} case.
% \esdel{and  $\eps = k^2$ ensures  the problem is  coercive independent of $k$
%   (see  Lemma {lem:coer} below).}
%\ednote{Euan says: I don't think we need to say this here, but let's discuss.}
       Alternatively, one can  perturb the wavenumber
       $k \mapsto k+ \ri \rho$, in which case, asymptotically as $k \rightarrow \infty$, $\eps \sim k^2$ corresponds to $\rho \sim k$ and $\eps \sim  k$ corresponds to $\rho \sim 1$.
        Absorptive (or ``lossy'') Helmholtz problems are of physical interest in their own right (see, e.g., the MEDIMAX problem
   \cite{MEDIMAX}, \cite[\S6.1]{BoDoGrSpTo:17c},
 and the references in   \cite{MeSaTo:19}). 
   % \ednote{Euan says: I've cut the referece to {MeSaTo:19}, since when I read  MeSaTo:19 I only find 2 references to
   %   physically-interesting occurrences of the Helmholtz problem with absorption.
   %   Ivan says OK but  it would be nice to have have more references than just MEDIMAX, and I thought the  two in MeSaTo:19 iare  better than none. Do you know some better references?}.
  Here our theory covers the absorptive case but we show experimentally that   our preconditioners can also be effective in the propagative case. 

In \cite{GrSpZo:18}, 
   a rigorous bound on the number of GMRES iterations was given 
under conditions relating the wavenumber $k$, the absorption $\eps$, and the size of the  overlap. 
To motivate the results of the present paper, we briefly summarise here the main theoretical results of \cite{GrSpZo:18} for  the PDE \eqref{eq:homo}. 

We write the finite element  discretisation of  \eqref{eq:hetero} as $\MA_\eps \bu_\eps =\bff$ and the one-level additive-Schwarz preconditioner as $\MB^{-1}_\eps$ (see \S\ref{sec:preconditioner} below for precise definitions).
One of the results in 
\cite{GrSpZo:18} showed, under the conditions outlined  above, and provided the subdomains are star-shaped with respect to a ball,  that 
there exist constants $C_1, \, C_2$ (which may depend on the polynomial degree $p$, but not    on other parameters) such that, 
for $k$ sufficently large,
\beq
\quad \Vert \MB^{-1}_\eps \MA_\eps\Vert_{\MD_k} \ \leq \ \ \Lambda \left(C_1 + C_2\min\left\{\frac{H}{\delta},  \frac{k}{\vert \eps\vert \delta}\right\}\right),  \label{eq:norm}
\eeq
where $\|\cdot\|_{\MD_k}$  denotes the Euclidean norm weighted with $\MD_k$, the stiffness matrix induced on the finite element space by the Helmholtz energy inner product
\begin{align} \label{eq:Helmen}
  (v, w)_{1,k} = (\nabla v, \nabla w) + k^2 (v,w),\end{align}
where $(\cdot, \cdot)$ is  the usual $L^2(\Omega)$ inner product

Since the right-hand side of  \eqref{eq:norm} is  bounded above independently of $k$,  one can  
obtain a bound on the number of iterations of GMRES 
applied to the preconditioned system $\MB^{-1}_{\eps}\MA_{\eps}$ by proving, in addition,
a lower bound for the distance of the field of values of $\MB^{-1}_{\eps}\MA_{\eps}$ from the origin and then using the so-called ``Elman estimate" for GMRES \cite{El:82, EiElSc:83}, \cite{BeGoTy:06}.
In \cite{GrSpZo:18} it was shown  that there is another constant $C_3>0 $ (which again may depend on $p$ but not on any other parameter) such that
\beq\label{eq:fov}
 \min_{\bV \not = \boldsymbol{0}} \frac{\vert \langle \bV, \MB_\eps^{-1}\MA_\eps  \bV\rangle_{\MD_k}\vert  }{\Vert \bV \Vert_{\MD_k} ^2 } \ \geq \  \left(\frac{1}{2 \Lambda}  -  C_3 \, \Lambda  \min\left\{\frac{H}{\delta},  \frac{k}{\vert \eps\vert \delta}\right\} % \frac{k}{\vert \eps\vert  \delta }
 \right).  
\eeq
Therefore, by choosing  parameters so that $k/(\vert \eps\vert  \delta)$ is sufficiently small, 
% \es{and $\delta\sim H$}
% \ednote{Needed since $k/(\eps H)\leq k/(\eps\delta)$. In the next subsection we talk about making $k/(\vert \eps\vert  \delta) $ small, so how about changing the $H$ to a $\delta$ here?}
 one can prove that the field of values of $ \MB_\eps^{-1}\MA_\eps$ (in the $\MD_k$ inner product) is bounded away from the origin. 
The Elman estimate then implies that GMRES applied to $\MB_\eps^{-1}\MA_\eps$ converges with the number of iterations independent of $k$, $\eps$, $h$, and $H$.

Although the argument in \cite{GrSpZo:18} is not rigorous for the pure Helmholtz case $\eps = 0$, the method  still empirically
provides a very effective preconditioner for the $\eps = 0$ case, provided  $H$ and $\delta$ do not decay too quickly as  $k$ increases -- see both the numerical experiments in \cite[\S4]{GrSpZo:18} and \S\ref{sec:numerical}. A heuristic explaining this is given in \cite[Appendix 1]{GrSpZo:18}, using the fact that  the problem with absorption is a good preconditioner for the pure Helmholtz problem (i.e.~$\MA_\eps^{-1}\approx \MA^{-1}$) when $\eps$ is not too big \cite{GaGrSp:15}.

Important features of the results in \cite{GrSpZo:18}   are that (a) they hold for
Lipschitz polyhedral domains and  cover sound-soft scattering problems, truncated 
using  first order absorbing boundary conditions (see Definition \ref{def:TEDP} below);
(b) the theory covers  finite element methods of any fixed order on shape regular meshes,
and  general  shape-regular subdomains (but the dependence of the estimates on the order is not explicit); (c) via a duality argument,  the theory covers both left- and right-preconditioning simultaneously; and 
(d) the  proof constitutes
a substantial extension of classical Schwarz theory to the non-self-adjoint case.

\subsection{The main results of this paper}
\label{subsec:novel}

\paragraph{Theory.}

On the theoretical side, the main achievements of this paper are 
\ben
\item extending the analysis in \cite{GrSpZo:18} to keep track of the dependence on the  polynomial degree $p$, and 
\item extending the analysis in \cite{GrSpZo:18} to the variable-coefficient Helmholtz problem
\beq\label{eq:hetero}
\nabla\cdot(A\gu) + (k^2+ \ri \eps) n u =-f
\eeq
(modelling wave propagation in heterogeneous media).
\een

Regarding 1:
the significance of analysing the $p$-dependence is that high-order methods suffer less from the pollution effect than low-order methods (see Remark \ref{rem:pollution} below), and are therefore preferable for high-frequency Helmholtz problems.
We show that for sufficiently large $k$,  
the constants $C_1, C_2, C_3$  
in \eqref{eq:norm} and \eqref{eq:fov} are
independent of the degree $p$. 
The Elman estimate then implies that, for sufficiently large $k$, GMRES applied to $\MB_\eps^{-1}\MA_\eps$ converges with the number of iterations independent of $p$.
The threshold  on  $k$ for achieving these results is, in principle, $p$-dependent, but in all our numerical experiments the iteration counts are independent of $p$ for all $k$.

  Regarding 2:  the significance of the analysis allowing variable coefficients is that a main motivation for the development of preconditioners for FEM discretisations of \eqref{eq:HetHelm} comes from practical applications involving waves travelling in  heterogeneous media (e.g.~in seismic imaging).
We show that the constants $C_2, C_3$  
in \eqref{eq:norm} and \eqref{eq:fov} have to be replaced by coefficient-dependent  quantities, but these only
depend on  the ``local''  variation of $A, n$
on each of the  subdomains $\Omega_\ell$; consequentially we obtain conditions for $k$-independent GMRES iterations, with these conditions depending on $A$ and $n$ only through this ``local" variation.

More precisely, our main theoretical results (stated for left-preconditioning, but holding also for right-preconditioning -- see Remark \ref{rem:rightpc} below) prove that there are absolute constants $c_2, c_3$ \emph{independent of all parameters} such that 
\beqs
\quad \Vert \MB^{-1}_\eps \MA_\eps\Vert_{\MD_k} \ \leq \ \ \Lambda \left(8 + c_2\cClocal(A,n) \,
\min\left\{\frac{H}{\delta},  \frac{k}{\vert \eps\vert \delta}\right\} 
 % \frac{k}{\vert \eps\vert \delta}
\right), 
%  \label{eq:norm1}
\eeqs
where $\MD_k$ is the same stiffness matrix introduced above 
and,
\beq
\min_{\bV \not = \boldsymbol{0}} \frac{\vert \langle \bV, \MB_\eps^{-1}\MA_\eps  \bV\rangle_{\MD_k}\vert  }{\Vert \bV \Vert_{\MD_k} ^2 } \ \geq \ \left(\frac{1}{2 \Lambda}  -  c_3 \, \Lambda \cClocal(A,n)
%  \frac{k}{\vert \eps\vert  \delta }
\min\left\{\frac{H}{\delta},  \frac{k}{\vert \eps\vert \delta}\right\}\right),  
\label{eq:fov1}
\eeq
where the ``local contrast'' $\cClocal(A,n)$ depends only on the variation of $A,n$ in each subdomain
  and not globally (see \eqref{eq:contrast}). 
% the ``local contrast'' is defined to be:   
% \beqs
% \igg{\cClocal(A,n) :=  \ \max_\ell \frac{\max\{A_{\max,\ell}, n_{\max,\ell} \}} { \min\big\{ A_{\min,\ell}, n_{\min,\ell}\big\}}, }
% \eeqs
% \ednote{Need to double check that the results in \S 5 really do prove this! Euan says: looking at \eqref{eq:local-approx2}, I think that the ``local contrast" should be   
% \beqs
%  \max_\ell \frac{A_{\max,\ell}\max\{1, n_{\max,\ell} \}} { \min\big\{ A_{\min,\ell}, n_{\min,\ell}\big\}}, 
% \eeqs
% \igg{Ivan says I'll come back to check  this on the final technical reading.} }
Hence we can still guarantee that GMRES will converge in a $k$-independent number of iterations % can still be guaranteed 
provided we now also make $k/(\vert \eps \vert \delta) $ sufficiently small relative to  the local contrast. This  is true  if, for example,  the overlapping subdomains are fixed and $\eps$ is a sufficiently large constant multiple of
$k$ (equivalently the wave number is $k + \ri \rho$ with $\rho$  a sufficiently large constant).    
% In this case
%\ednote{I decided to remove the formula for $\cClocal$ here}
% \igg{In \S \ref{sec:sigma} we prove that in order to make the last term in \eqref{eq:fov1} small, it is necessary to increase  $\eps$. }
% \beqs
% \igg{\cClocal(A,n) :=  \ \max_\ell \frac{A_{\max,\ell}} { \min\big\{ A_{\min,\ell}, n_{\min,\ell}\big\}}. }
% \eeqs

% where $A_{\min,\ell},  A_{\max,\ell} $ are minimum  and maximum  eigenvalues of $A$ and   
% $n_{\min,\ell}, n_{\max,\ell}$ the minimum and maximum values of $n$    on the $\ell$th subdomain.

Another important feature of our  theory  is that we prove convergence results for  implementations  of GMRES \emph{both} in the weighted inner product  $\langle \cdot, \cdot \rangle _{\MD_k}$ \emph{and} in the standard Euclidean inner product;
this is in contrast to \cite{GrSpVa:17}, \cite{BoDoGrSpTo:17c}, and \cite{GrSpZo:18}, which only prove convergence results
in the weighted inner product.
Corollary \ref{cor:GMRES} below shows that  standard GMRES requires at most 
$(\log k)/(pC)$ more iterations than weighted  GMRES, where $C$ is a constant independent of all parameters. 

Our theory relies on error estimates for the finite-element method that are explicit in both $k$ and the  variable coefficients. Obtaining such error estimates is 
the subject of current research, see \cite{Ch:16},\cite{BaChGo:17},\cite{SaTo:18},\cite{ChNi:19},\cite{GaMo:19},\cite{GrSa:20},\cite{GaSpWu:18},\cite{LaSpWu:19}, and we make contact with these results in Remarks \ref{rem:pollution}, \ref{rem:meshthresh1}, and \ref{rem:meshthresh2} below.

\paragraph{Computation.}
%\ednote{Ivan says we may add a bit more here} 
In our numerical experiments, we investigate the dependence of the preconditioner on both the polynomial degree $p$ and the coefficients $A$ and $n$ for a variety of 2D variable coefficient  Helmholtz problems (both with and without absorption). In agreement with the theoretical results,  we find that the number of GMRES iterations for the preconditioned system  is independent of $p$ for $p=1,2,3,4$, and depends only on the local variation of $A$ and $n$.

We also investigate the limits of the theory by computing the field of values of the preconditioned system $\MB^{-1}_\eps\MA_\eps$ for various choices of absorption $\eps$. We give results showing that  the estimates \eqref{eq:fov}, \eqref{eq:fov1} appear to be sharp; i.e.~we give an example where $k/\vert \eps\vert  \delta$ does not approach $0$ and for which  the origin lies inside the field of values of $\MB^{-1}_\eps\MA_\eps$.
% \esdel{this complements theoretical justification of this point given  in \S{sec:sigma}}. 
This example indicates that the dependence of the bound \eqref{eq:fov1} on $k/\vert \eps\vert  \delta$ cannot be removed. However for this example, GMRES often performs  well, once again highlighting the important point that
having both the norm bounded above and the distance of the field of values bounded away from the origin are sufficient but certainly not necessary for good convergence of GMRES.

\bre[How trapping affects our results]\label{rem:trapping}
The behaviour of the solution of  \eqref{eq:hetero} when $\eps=0$ depends crucially on whether or not the coefficients $A$ and $n$ and any impenetrable obstacles in the domain are such that the problem is \emph{trapping}. A  precise definition of the converse of trapping (known as \emph{nontrapping}) is given in Definition \ref{def:nontrapping} below, but roughly speaking a problem is trapping if there exist arbitrarily-long rays in the domain (corresponding to high-frequency waves being ``trapped").

Under the strongest form of trapping, the Helmholtz solution operator grows exponentially through a discrete sequence of frequencies $0<k_1<k_2<\ldots<k_j \tendi$; see \cite[\S2.5]{BeChGrLaLi:11} for this behaviour caused by an obstacle, \cite{Ra:71}, \cite[Theorem 7.7]{GrPeSp:19} for this behaviour caused by a continuous coefficient, and \cite{PoVo:99}, \cite[Section 6]{MoSp:17} for this behaviour caused by a discontinuous coefficient.

The vast majority of numerical analysis of the Helmholtz equation when $\eps=0$ therefore takes place under  a nontrapping assumption (see the references in \cite[\S1.3.1]{LaSpWu:19}) and we make a similar assumption in this paper for our theory when $\eps=0$; see Assumption \ref{ass:nontrapping} below.

However, it is known empirically that this exponential growth in the solution operator is ``rare", in the sense that the solution operator under trapping is well behaved for ``most" frequencies, and a first rigorous result establishing this was recently obtained in \cite[Theorem 1.1]{LaSpWu:19}.
We see the rareness of ``bad behaviour" under trapping in the numerical experiments in \S\ref{sec:numerical}; indeed, several of the problems with variable $A$ and $n$ considered in this section are provably trapping, and yet we never see the extreme ill-conditioning associated with trapping for any of the frequencies at which we compute. 
\ere

\subsection{Brief survey of other work on Helmholtz preconditioners}
\label{subsec:brief}

We mention here some recent papers on iterative solution of the Helmholtz equation.
  A complete and up-to-date survey with more detail is available in \cite{GrSpZo:18}. 

  Two important classes of algorithms for high frequency Helmholtz problems are those based on multigrid with ``shifted Laplace'' preconditioning and those based on inexact factorizations via ``sweeping-type methods''.
  Foundational papers for these methods
  are   \cite{ErVuOo:04,ErOoVu:06} and \cite{EnYi:11c,EnYi:11b} respectively.
  Recent work on shifted Laplace has focused on improving robustness using deflation e.g.
  \cite{ShLaRaNaVu:16,ErRaNa:17,LaTaVu:17}.
  Following earlier work on parallel sweeping methods  (e.g.~\cite{PoEnLiYi:13}), 
efficient  parallel implementations of sweeping-type methods 
via off/online strategies are a focus of the polarized-trace algorithms  \cite{ZeDe:16,ZeDe:17,TaZeHeDe:19},

   While much of this work is empirical, often based on considerable physical insight, there has been a
   parallel development of theoretical underpinning of these algorithms. A theoretical basis for sweeping algorithms on the continuous level is given in \cite{ChXi:13}. A modern survey linking sweeping-type algorithms with optimized Schwarz methods is given in  \cite{GaZh:19}. The current authors have been involved in the development of a theory for additive Schwarz
   preconditioners for  Helmholtz problems (e.g., \cite{GrSpVa:17, GrSpZo:18}). So far this theory  has  not examined the dependence of the theory on heterogeneity or finite-element  degree. These topics are the focus of the present paper.     
   
 In the final stages of writing this paper, we learned of the paper \cite{BoClNaTo:20}.
     Both \cite{BoClNaTo:20} and the present paper start from the analysis of the homogeneous Helmholtz equation
     in \cite{GrSpZo:18}  and extend it in various ways.
     Whilst the present paper focuses on extensions to  the heterogeneous Helmholtz equation, 
     \cite{BoClNaTo:20} instead obtains general conditions under which results analogous to those in \cite{GrSpZo:18}
     hold, and then applies these to obtain results about preconditioners for  a heterogeneous
     reaction-convection-diffusion equation. Both papers share the goal of determining the
     constants in the field-of-value estimates in terms of more fundamental constants. 
     In our case these are $k,h,H,\delta, \eps, p $ and $\cClocal(A,n)$ -- see \cite[Section 4.2]{BoClNaTo:20} for more details of their analysis.

\subsection{Organisation of the paper}

In \S \ref{sec:weak} we define precisely the variable-coefficient Helmholtz boundary-value problem, its finite-element discretisation, and the one-level additive Schwarz DD preconditioner. In \S \ref{sec:solution_operators} we study the solution operator of the global and local problems, at both continuous and discrete levels. In \S \ref{sec:dd_results}, we give some preliminary estimates on the interpolations and the local projections. The main theoretical results are presented in \S \ref{sec:main_results} and the numerical results are presented in \S \ref{sec:numerical}.

  \section{The PDE problem, its discretisation    and the preconditioner}
\label{sec:weak}

\subsection{The PDE problem and its finite-element discretisation}

We consider the PDE problem \eqref{eq:HetHelm}, posed in a certain  domain $\domain$ that is a subset of the domain exterior to a bounded
obstacle $\domain_{-}$.

  \begin{assumption}[Assumptions on $ \domain_-, \domain$ and on the coefficients $A,n$]
  \label{ass:1}

\

(i) 
  $\domain_-$ is assumed to be a   bounded Lipschitz polygon ($d=2$) or polyhedron ($d=3$),
  with connected  open complement $\domain_+:= \Rea^d\setminus \overline{\domain_-}$.
  Also,  $\overline{\domain_-}\subset\subset \domain^\prime$, for some  
  connected Lipschitz polygon/polyhedron  $\domain^\prime$.   Then  $\domain:=\domain^\prime\setminus\overline{\domain_-}$ with boundary  $\partial \domain= \Gamma_D \cup \Gamma_I$, where 
 $\Gamma_D:= \partial \domain_-$ (the `scattering' boundary), and $\Gamma_I :=\partial \domain^\prime$ (the `far field' boundary). Note that $\Gamma_D\cap \Gamma_I = \emptyset$.

(ii) $A \in L^\infty(\domain, \SPD)$ (where $\SPD$ denotes the set of $d\times d$ real, symmetric, positive-definite matrices)
and
there exist $0<A_{\min}\leq A_{\max}<\infty$ such that, in the sense of quadratic forms,
\beq\label{eq:Alimits}
 A_{\min}\leq A(\bx)\leq A_{\max}\quad\text{ for almost every }\bx \in \domain.
\eeq

(iii) $n\in L^\infty(\domain,\Rea)$,  and there exist $0<n_{\min}\leq n_{\max}<\infty$ such that
\beq\label{eq:nlimits}
0<n_{\min} \leq n(\bx)\leq n_{\max}<\infty\,\, \text{ for almost every } \bx \in \domain.
\eeq
\end{assumption}
The PDE is posed  in the space 
\beq\label{eq:spaceEDP}
H_{0,\Gamma_D}^1(\Omega):= \big\{ v\in H^1(\Omega) :  v=0 \ton \Gamma_D\big\},
\eeq
where we do not use any specific notation for the trace operator on $\Gamma_D$ (or on any other boundary).
The complex $L^2$ inner products on the domain $\Omega$ and the boundary $\Gamma_I$ are denoted by $(u, v):=\int_{\Omega} u\bar{v}~\rd x$ and $\langle u, v \rangle: = \int_{\Gamma_I} u\bar{v}~\rd s$, respectively. The $L^2$ norm on $\Omega$ is denoted by $\Vert \cdot \Vert$. Subscripts will be used to denote norms and inner products over other domains or surfaces, e.g., $(u, v)_{\tilde{\Omega}}$ and $\langle u, v \rangle_{\tilde{\Gamma}}$.

\begin{definition}[Truncated Exterior Dirichlet Problem (TEDP)]\label{def:TEDP}
Given $\Omega$, $A$, and $n$ satisfying Assumption \ref{ass:1}, $f\in L^2(\domain)$, $g_I\in L^2(\Gamma_I)$, $k>0$, and $\eta>0$,  
we say $u\in H_{0,\Gamma_D}^1(\Omega)$ satisfies the truncated exterior Dirichlet problem if 
\begin{equation}\label{eq:contvp}
a_{\eps}(u,v) = F(v), \quad \tfa v\in H^1_{0,\Gamma_D}(\Omega),
\end{equation}
where 
\begin{equation}\label{eq:a_eps}
a_{\eps}(u,v): = (A \nabla u, \nabla {v}) - (k^2+\ri \eps) (n u,{v}) - \ri \eta \langle \sqrt{n} u,{v}\rangle
 \quad {\text{and}} \quad F(v): = (f,{v}) + \langle g_I , v\rangle.
\end{equation}
\end{definition}

That is, $u$ satisfies the PDE \eqref{eq:hetero}, a zero Dirichlet boundary condition on $\Gamma_D$, and the impedance boundary condition 
\beq\label{eq:impedance}
\partial_{\bn, A} u - \ri \eta  \sqrt{n} u = g_I \ton \Gamma_I,
\eeq
% outward-pointing unit normal vector to $\Omega_\ell$
where  $\partial_{\bn, A}u$ is the \emph{conormal derivative} of $u$, which is such that, if $A$ is Lipschitz and $u\in H^2$, then $\partial_{\bn, A}u= (A \nabla u)\cdot \bn$,  with   $\bn$ denoting  the
  unit normal on $\Gamma_I$, pointing outward from $\Omega$
 (see, e.g., \cite[Lemma 4.3]{Mc:00}).

\begin{assumption}[The choice of $\eta$]\label{ass:eta}
In the following, we set \emph{either} $\eta =\mathrm{sign}(\eps) k$ \emph{or} $\eta =\sqrt{k^2+\ri \eps}$ (with both choices understood as $\eta=k$ when $\eps=0$), where the square root is defined with the branch cut on the positive real axis. 
\end{assumption}

We record for later the facts that, under either of the choices of $\eta$ in Assumption \ref{ass:eta},
\begin{equation}\label{eq:eta}
\Im(\eta) \ge 0, \quad \mathrm{sign}(\eps)\Re(\eta)>0.
\end{equation}

\ble[Well-posedness of the TEDP]\label{lem:wp}

With $\eta$ chosen as in Assumption \ref{ass:eta},
the solution of the TEDP of Definition \ref{def:TEDP} exists, is unique, and depends continuously on the data in the following situations.

(i) $|\eps|\neq 0$, $d=2,3$,

(ii) $\eps=0$, $d=2$, 

(iv) $\eps=0$, $d=3$, 
and $A$ is piecewise Lipschitz.
\ele

\bpf[References for proof]
When $|\eps|\neq 0$, the result follows from the Lax--Milgram theorem, since $a_\eps$ is continuous and coercive on $H^1_{0,\Gamma_D}(\Omega)$ by Lemmas \ref{lem:cont} and \ref{lem:coer} below.

For the case $\eps=0$, first observe that the first inequality in \eqref{eq:eta} implies that $a_0$ satisfies the G\aa rding inequality
\beqs
\Re a_0(v,v) \geq A_{\min} \N{\nabla v}^2 - k^2 n_{\max} \N{v}^2,
\eeqs
and therefore well-posedness follows from uniqueness by Fredholm theory (see, e.g., \cite[Theorem 2.33]{Mc:00}). Uniqueness follows from the unique continuation principle (UCP), with the condition that $A$ is piecewise Lipschitz when $d=3$ ensuring that this principle holds. In the case when $A$ is scalar-valued (and, when $d=3$, Lipschitz), these UCP results are recapped and then applied to the TEDP for the Helmholtz equation in  \cite[\S2]{GrSa:20}. When $A$ is matrix-valued, the relevant UCP results are summarised in \cite[\S1]{GrPeSp:19}. The result that, if a UCP holds for Lipschitz $A$, then a UCP holds for piecewise Lipschitz $A$ is proved in \cite{BaCaTs:12}; see the discussion in \cite[\S2.4]{GrPeSp:19}.
\epf

\bre[The behaviour of $A$ and $n$ in a neighbourhood of $\Gamma_I$.]\label{rem:Sommerfeld}
It would be natural to add the extra conditions to Assumption \ref{ass:1} that $\supp(I-A)$ is compact in $\domain^\prime$ and $\supp(1-n)$ is compact in $\domain^\prime$,  implying that $A=I$ and $n=1$ in a neighbourhood of $\Gamma_I$. Indeed, 
the rationale for imposing the impedance boundary condition \eqref{eq:impedance} is that (with $A=I$, $n=1$, $\eta=k$ and $g_I=0$) it is the simplest-possible approximation of the Sommerfeld radiation condition 
\beqs%\label{eq:src}
\frac{\partial u}{\partial r}(\bx) - \ri k u(\bx) = o \left( \frac{1}{r^{(d-1)/2}}\right)
\tas r:= |\bx|\rightarrow \infty, \text{ uniformly in } \widehat{\bx}:= \bx/r,
\eeqs
which appears in exterior Helmholtz problems. When solving a scattering problem by truncating the domain, one would naturally place the truncation boundary $\Gamma_I$ so that it encloses the entire scatterer $\supp(I-A)\cup \supp (1-n)\cup \overline{\Omega_-}$, ensuring  that $A=I$ and $n=1$ in a neighbourhood of $\Gamma_I$. 
All our numerical experiments in \S\ref{sec:numerical} are for this  situation,
but we allow more general $A$ and $n$ in our analysis. 
\es{This is because the local sesquilinear form $a_{\eps,\ell}$, defined by \eqref{eq:local-vp} below, is very similar to $a_\eps$, except that the region of integration is the $\ell$th subdomain of the domain decomposition (denoted $\Omega_\ell$) instead of $\Omega$.
% If the subdomain diameters are smaller than the diameters of $\supp(I-A)$ and $\supp(1-n)$, then at least one $\Omega_\ell$ will have $A\neq I$ and $n\neq 1$ on $\partial\Omega_\ell$. 
In \S\ref{sec:solution_operators}, we prove results for both $a_\eps$ and $a_{\eps,\ell}$ simultaneously, and in doing so  we do not make any assumptions about $A$ and $n$ on the boundary of the domain in Assumption \ref{ass:1}
}
% \igg{\st{ because then there is a closer link between $a_\eps$ and the local sesquilinear forms $a_{\eps,\ell}$ appearing in our domain decomposition method (see } \eqref{eq:local-vp} \st{ below).}
  % because the domain decomposition method partition the truncated domain in to subdomains, of which the boundaries may pass through the scatter where $A$ and $n$ vary.
  % because, even if the coefficients $A,n$ are constant on $\Gamma_I$,
% While it is somewhat natural for the outer impedance boundary to be outside the region of heretogeneity for the global problem, 
% \esnote{Have replaced previous explanation for following two reasons. 1) It didn't refer to the local sesquilinear form. 2) I think that talking about mesh partitioning is not relevant. Indeed, our subdomains are shrinking with $k$, whereas it's natural to assume (although not explicitly stated) that the $A\neq I$ and $n\neq 1$ on a region of diameter independent of $k$. Therefore even if one chose the subdomains ``by hand'', one couldn't ensure that $A=I$ and $n=1$ on all the boundaries.}
\igg{This permits the domain decomposition to  have subdomains that pass through regions with  variable coefficient, thus  maximising  generality (e.g., to allow the application of  automatic mesh partitioners).
  % Thus for the theory of local (or global solves) we also allow heterogeneity on the boundary of the relevant domain.
}
% We allow this general situation since we allow very general domain decompositions, the local sequlilinear forms $a_{\epsilon,\ell}$  will very likely have coefficient variation on their boundaries $\Gamma_{I,\ell}$.
\ere

\begin{remark} [Interior impedance problem] \label{rem:IIP}
  Note that the case  $\domain_- = \emptyset$ (i.e., there is no impentrable scatterer) is included in this set-up, and in this case the space \eqref{eq:spaceEDP} is simply $H^1(\Omega)$. The  resulting PDE problem is often called the
  Interior Impedance Problem.
\end{remark}

\paragraph{Finite-element discretisation.} We approximate \eqref{eq:contvp}
in a  conforming nodal finite element space $\mathcal{V}^h \subset H^1_{0,\Gamma_D}$ that consists of continuous piecewise  polynomials of total degree no more than $p$ on a shape-regular mesh $\mathcal{T}^h$ with mesh size $h$. This
yields the linear system 
\begin{equation}\label{eq:discrete}
\MA_\eps \bu = \bff, \quad \text{where} \quad \MA_\eps = \MS - (k^2+\ri \eps) \MM - \ri \eta \MN,
\end{equation}
and 
\beq\label{eq:defmatrices} 
  (\MS)_{i,j} = (A \nabla \phi_j,\nabla \phi_i), \quad (\MM)_{i,j} = ( n   \phi_j,  \phi_i),
  \quad (\MN)_{i,j} =\langle  \sqrt{n}\phi_j , \phi_i  \rangle, \quad   \text{and} \quad (\bff)_i = F(\phi_i).
\eeq
Here $\{\phi_j:j\in \mathcal{I}^h\}$ is the nodal basis of the finite element space $\mathcal{V}^h$ and $\mathcal{I}^h$ is a suitable index set for  the nodes $\{\bx_j:j\in \mathcal{I}^h\}$. 
We  use  the $h$-version of the finite-element method where, in the context of solving the high-frequency Helmholtz equation, $p$ is fixed and then $h$ is chosen as a function of $k$ and $p$ to maintain accuracy as $k$ increases.

\bre[Under what conditions on $h$, $p$, and $k$ does the finite-element solution exist?]
\label{rem:pollution}
When $|\eps|>0$, $a_\eps$ is continuous and coercive (see Lemmas \ref{lem:cont} and \ref{lem:coer} below) and so, by the Lax--Milgram theorem and C\'ea's lemma, the finite-element solution exists and is unique for any $\cV^h$.
The finite element error is then bounded in terms of the best approximation error, but with a constant that in general depends on $k$ and $\eps$.  

When $\eps=0$, $A=I$, and $n=1$ (i.e., ~for the \emph{constant-coefficient} Helmholtz equation), the question of how $h$ and $p$ must depend on $k$ for the finite-element solution to exist has been studied since the work of Ihlenburg and Babu\v{s}ka in the 90's \cite{IhBa:95, IhBa:97}.
For the $h$-version of the FEM (where accuracy is increased by decreasing $h$ with $p$ fixed), provided that the problem is nontrapping (i.e.~$\Csol$ defined in Definition \ref{def:Csol} below is bounded independently of $k$), then the Galerkin method is quasioptimal when  $k^{p+1}h^p$ is sufficiently small \cite{Me:95, Sa:06, MeSa:10, MeSa:11}.  However  the Galerkin solution exists and is bounded by the data  under the weaker condition that $k^{2p+1}h^{2p}$ is sufficiently small   \cite{Wu:14, ZhWu:13, DuWu:15}.
For the $hp$-version (where accuracy is increased by decreasing $h$ and increasing $p$), the Galerkin method is quasioptimal if $hk/p$ is sufficiently small and $p$ grows logarithmically with $k$ by \cite{MeSa:10, MeSa:11}, with this property holding for scattering problems even under the strongest-possible trapping by \cite[Corollary 1.3]{LaSpWu:19}.

Obtaining the analogues of these results when $A\neq I$ and $n\neq 1$ (i.e.~for \emph{variable-coefficient} Helmholtz problems) is the subject of current research, with the property ``$k^{p+1}h^p$ sufficiently small for quasioptimality" proved for the TEDP in \cite[Proposition 2.5, Theorem 2.15]{ChNi:19} (see also \cite[\S4]{GrSa:20} and \cite[\S6]{GaSpWu:18} for the case $p=1$), 
the property ``$k^{2p+1}h^{2p}$ sufficiently small for error bounded by the data" proved in \cite[Theorem 2.35]{Pe:19}, and the property 
``$k^{3}h^{2}$ sufficiently small for relative error bounded when $p=1$" proved for scattering problems in \cite{LaSpWu:19a} (and  all these  results  assume  that the problem is nontrapping).
We highlight that, when $p>1$, these results require additional smoothness conditions on $\Gamma_D$, $\Gamma_I$, $A$, and $n$
in addition to  Assumption \ref{ass:1}; see \cite[\S2.1]{ChNi:19}, \cite[Assumption 2.31]{Pe:19}.
\ere

\subsection{The  preconditioner}
\label{sec:preconditioner}
To precondition the linear system \eqref{eq:discrete}, we   use a variant of the simple  one-level additive 
Schwarz method,   based on a set of Lipschitz polyhedral    
subdomains   $\{\Omega_\ell\}_{\ell = 1}^N$,  
forming  an overlapping cover of $\Omega$.
We assume that each $\overline{\Omega_\ell}$  
is non-empty and is  a union of  elements of the mesh $\cT^h$. We also  assume that if $\partial \Omega_\ell 
  \cap \Gamma_D  \not = \emptyset$,  then it has positive surface measure. Because $\Omega_\ell$ consists
  of a union of fine-grid elements,  $\partial \Omega_\ell 
  \cap \Gamma_D $ 
  then contains at least one face of a fine-grid element.

Recall that 
a domain is said to have  \emph{characteristic length scale} $L$ if its diameter $\sim L$, its surface area $\sim L^{d-1}$, and its volume $\sim L^d$. We assume that each $\Omega_\ell$ has characteristic length scale $H_\ell$,  and we set $H = \max_\ell H_\ell$.
 In our analysis  we allow $H$ to depend on $k$ including the possibility that $H$ could approach $0$ as $k\rightarrow \infty$.

The key component of the  preconditioner for \eqref{eq:discrete} is the solution of discrete ``local''  
impedance boundary-value problems:
\beqs%\begin{equation}\label{eq:PDElocal} 
\nabla \cdot (A \nabla u)  + (k^2+ \ri \abs)nu = -f  \quad \text{on} \quad \Omega_\ell ,   %\end{equation} 
\eeqs
with
\beqs%\begin{equation} \label{eq:impBClocal} 
   \partial_{\bnu, A}u - \ri \eta \sqrt{n}  u = 0  \,\,\ton \,\,\Gamma_{I,\ell}:=\partial \Omega_\ell 
\backslash \Gamma_D \quad\text{and} \quad  u = 0  \,\,\ton\,\, \Gamma_{D,\ell}:=\partial \Omega_\ell
\cap \Gamma_D;
\eeqs %\end{equation} 
where $\bnu$ is the outward-pointing unit normal vector to $\Omega_\ell$ and $\partial_{\bnu, A}u$ is the \emph{conormal derivative} of $u$.
Observe that the geometric set-up in Assumption \ref{ass:1} implies that $ \partial \Omega_\ell 
\backslash \Gamma_D$ has positive measure, and so each of these local problems is well-posed.
The local impedance sesquilinear form on $\Omega_\ell$ is, for $u,v\in H^1(\Omega_\ell)$,
\begin{equation}\label{eq:local-vp}
a_{\eps,\ell}(u,v) := (A \nabla u, \nabla {v})_{\Omega_\ell} - (k^2+\ri \eps) (n u,{v})_{\Omega_\ell} - \ri \eta \langle \sqrt{n} u,{v}\rangle_{\Gamma_{I,\ell}}.
\end{equation}
We denote by $\MA_{\abs,\ell}$ the matrix obtained by 
approximating \eqref{eq:local-vp} in the local finite element space 
\beq\label{eq:localV}
 \cV_{\ell}^h: =\{v_{h}|_{\Omega_\ell}~:~v_{h}\in \mathcal{V}^h\};
\eeq
this matrix is a local analogue of the matrix  $\MA_\abs$ in \eqref{eq:discrete}. Recalling  that functions in $\cV^h$ vanish on the Dirichlet boundary $\Gamma_D$,    
we observe that functions in $\cV^h_\ell$ also vanish on $\partial \Omega_\ell\cap \Gamma_D$ (which contains at least one face of one fine-grid element if it is non-empty,  but are otherwise
unconstrained).
To connect these local problems, we use a partition of unity $\{\chi_\ell\}_{\ell = 1}^N$ with the properties that, for each $\ell$,
  \beq
  \supp \chi_\ell \subset \overline{\Omega_\ell}, \quad
  0 \leq  \chi_\ell(\bx) \leq 1\,\,  \text{when } \bx \in \overline{\Omega_\ell},\quad\tand\quad
  \sum_\ell  \chi_\ell(\bx) = 1 \,\tfa \bx \in \overline{\Omega}, 
  \label{POUstar}
  \eeq
where we define $\supp \chi_\ell = \{ \bx \in \overline{\Omega} : \chi_\ell(\bx) \not= 0\}$. 
Additional properties of these functions are needed later -- see \eqref{eq:derivpou}, \eqref{eq:Cchidef}.
% \ednote{Note that in the following I have interchanged the roles of $\MR_\ell$ and $\ntMR_\ell$. I think the referee was confused by this and remember that we have agreed to change this notation anyway in our latest work with Martin. It seems that there is only minor effect of this change. I have done it as a macro, in red} 
Note that each  $v_{h} \in \cV^h$  
is uniquely determined by its values $\{V_{q} : = v_{h} (\bx_{q}), \ q \in \cI^h\}$ on all the nodes.  Nodes on the subdomain  $\overline{\Omega_\ell}$ 
are denoted by $ \{ \bx_{q}: q \in \cI^h(\overline{\Omega_\ell})\} $.    Using this notation, we define a restriction matrix  $\MR_\ell$ 
that uses $\chi_\ell$ to map
a nodal vector defined on $\overline{\Omega}$ to a nodal vector on $\overline{\Omega_\ell}$:     
\beq 
\label{eq:explicit}  
(\MR_\ell \bV)_{q} \ = \ \chi_\ell(\bx_{q}) V_{q} , \quad q \in \cI^h(\overline{\Omega_\ell}) . 
\eeq
The preconditioner for $\MA_\abs$ that we analyse in this paper is then simply: 
\begin{equation} \label{eq:ASpc} 
\MB_\abs^{-1}  :=  \sum_{\ell = 1}^N 
\MR_\ell^\top (\MA_{\abs,\ell})^{-1} \MR_\ell  \ ,
\end{equation}
where  $\MR_\ell^\top $ is the   transpose of $\MR_\ell$. 
The action of $\MB_{\eps}^{-1}$ therefore consists of $N$ parallel ``local impedance solves''  added up with the aid of  
appropriate restrictions/prolongations.   

% \esnote{Have combined this next sentence with a sentence at the end of the section (commented out) which also said that the theory is for \eqref{eq:ASpc} but we give numerical experiments for both \eqref{eq:ASpc} and \eqref{eq:ASpc1}.}
%Although the \esdel{present} theory \es{in this paper} applies only to \eqref{eq:ASpc}, we perform numerical experiments on both \eqref{eq:ASpc} and \eqref{eq:ASpc1} \es{in \S\ref{sec:numerical}}.
\es{We now describe}
\igg{a related preconditioner,
  % \esdel{(which we also illustrate in our later computations)},
  \es{for which we give no theory, but which we consider in our numerical experiments (alongside \eqref{eq:ASpc}).}
\es{This second preconditioner} involves the simpler  ``restriction by  chopping''  operator \igg{${\ntMR}_\ell$} that maps a nodal vector on $\overline{\Omega}$ to a nodal vector on $\overline{\Omega}_\ell$,  according to the rule:
\beq 
\igg{({\ntMR}_\ell \bV)_{q} \ = \  V_{q} , \quad q \in \cI^h(\overline{\Omega_\ell}) .} 
\eeq
Then, we  replace  the occurence of $\MR_\ell$ on the  right-hand side of   
\eqref{eq:ASpc} by  ${\ntMR}_\ell$,  to obtain 
\begin{equation} \label{eq:ASpc1} 
{\widehat{\MB}}_\abs^{-1}  :=  \sum_{\ell = 1}^N 
\MR_\ell^\top (\MA_{\abs,\ell})^{-1} {\ntMR}_\ell  \ .
\end{equation}
Preconditioners \eqref{eq:ASpc}  \es{and} \eqref{eq:ASpc1} were  both originally introduced  in \cite{KiSa:07}, where they comprise the one-level
components of the preconditioners \igg{there named, respectively,}  {OBDD-H} and {WRAS-H}.
%(However  \cite{KiSa:07} uses different notation to that used here.)

As the name suggests, \eqref{eq:ASpc1} also bears some resemblance to the Optimized Restricted Additive Schwarz (ORAS) method, discussed, for example in \cite{st2007optimized}, \cite{DoJoNa:15}. \igg{In the simplest purely algebraic case (as described in \cite{st2007optimized}) one  can  start  with a non-overlapping  decomposition of the unknowns into subsets,   which is then  extended to an overlapping one.  Then, ORAS can be written in the form \eqref{eq:ASpc1}, with 
  $\ntMR_\ell$ denoting restriction by chopping onto the $\ell$th overlapping subdomain, and  $\MR_\ell^\top$ denoting  extension by zero with respect to the non-overlapping cover.

  The  term `optimized' usually refers to the case when the parameter(s) in a Robin boundary condition 
  are  chosen to  enhance the convergence rate of an iterative solver.
In the   Helmholtz case here,  the classical impedance condition is used on subdomain boundaries, without optimization.  Nevertheless  the name ORAS is sometimes used for \eqref{eq:ASpc1}, and we use that name here. Also we  call \eqref{eq:ASpc} the SORAS preconditioner for Helmholtz, since the restriction $\MR_\ell$ and  prolongation $\MR_\ell^\top$ are here   applied in a  `symmetric' way. This terminology is used elsewhere in the literature\es{; see,} e.g., \cite{BoDoGrSpTo:17a, BoClNaTo:20}. }
% \esnote{Adding some ``linking words" since these next sentence was just hanging there.}

\igg{Finally, we mention that a one level RAS-type  method with Impedance boundary conditions for Helmholtz was discussed in  \cite{GrSpVa:17,GrSpVa:17a}, where it was  called IMPHRAS1.   }
%Although the present theory applies only to \eqref{eq:ASpc}, we perform numerical experiments on both \eqref{eq:ASpc} and \eqref{eq:ASpc1}.
}

 \section{The solution operators for the continuous and discrete global and local problems}
 \label{sec:solution_operators}
 
A key step in the analysis of \eqref{eq:ASpc} is to obtain estimates for the local inverse matrices $\MA_{\eps,\ell}^{-1}$. We obtain these in \S \ref{subsec:disclocal} below. The preceding subsections are devoted the analogous results for the corresponding continuous problems.    
The following assumption is sufficient for many of our technical estimates. The results in \S \ref{sec:main_results} require the addition of stronger assumption (Assumption \ref{ass:3} below).
\begin{assumption}\label{ass:2}
\beq\label{eq:eps}
k \geq 1, \quad |\eps|\leq k^2 \quad\tand\quad hk\leq 1.
\eeq
\end{assumption}

We perform the analysis of this paper in the following weighted $H^1$ norm (also called the ``Helmholtz energy norm") defined by 
 \begin{align} \label{eq:HelmE} 
   \Vert v \Vert_{1,k} := (v,v)_{1,k}^{1/2},
 \end{align}
 where $(v,w)_{1,k}$ is defined in \eqref{eq:Helmen}. 
 When $\widetilde{\Omega}$ is any subdomain of $\Omega$ we write $(\cdot, \cdot)_{1,k,\widetilde{\Omega}}$ and $\Vert \cdot \Vert_{1,k,\widetilde{\Omega}}$ for the corresponding inner product and norm on $\widetilde{\Omega}$.
The inner product $(\cdot,\cdot)_{1,k}$ and norm $\|\cdot\|_{1,k}$ in \eqref{eq:HelmE} induce an inner-product and norm on finite-element functions. 
Suppose  $v_h, w_h \in \cV^h$ are  represented by the vectors $\bV,\bW$ with respect to the nodal basis 
$\{\phi_j:j\in \mathcal{I}^h\}$. Then 
\beqs
(v_h, w_h)_{1,k} = \langle \MD_k\bV, \bW\rangle_2= \bW^*\MD_k \bV
\eeqs
where $\langle\cdot,\cdot\rangle_2$ denotes the standard Euclidean inner product  on $\Com^n$,
% \ednote{This notation is needed in \S 5} 
and where $\MD_k= \MS + k^2 \MM$, with  $\MS$ and $\MM$ as  defined in \eqref{eq:defmatrices}
with $A=I$ and $n=1$.  % i.e.~$\MD_k$ is the stiffness matrix arising from approximating $\|\cdot\|_{1,k}$ via the Galerkin method in $\cV^h$.
We therefore define on $\Com^n$
%\begin{align}
\beqs%\label{eq:wip}
  \langle \bV ,\bW \rangle_{\MD_k}:=  \bW^*\MD_k \bV, \quad \Vert \bV \Vert_{\MD_k} :=  \langle \bV ,\bV \rangle_{\MD_k}^{1/2}.
\eeqs
%\end{align} 
With  $A_{\max}, A_{\min}, n_{\max},$ and $n_{\min}$ as  defined in Assumption \ref{ass:1}, we  define $A_{\max,\ell}, A_{\min, \ell}, n_{\max, \ell},$ and $n_{\min,\ell}$ in an analogous way, i.e., for almost every $\bx \in \domain_\ell$, 
\beq\label{eq:Alimitsell}
 A_{\min,\ell}\leq A(\bx)\leq A_{\max,\ell}\quad\text{and} \quad 
% \beqs%\label{eq:nlimitsell}
0<n_{\min, \ell} \leq n(\bx)\leq n_{\max, \ell}<\infty,
\eeq
with the first inequality holding in the sense of quadratic forms.  

 \subsection{Continuity and coercivity of $a_\eps$ and $a_{\eps,\ell}$}
 
 Since 
   $a_\eps$  and $a_{\eps,\ell}$ given by \eqref{eq:a_eps},\eqref{eq:local-vp} % have the same structure, and
   differ only in their
   domains of integration,  
 we state and prove the results for $a_\eps$, with the results for $a_{\eps,\ell}$ following in an analogous way.

Recall that, for any Lipschitz domain $D$ with characteristic length scale $L$, there exists a dimensionless quantity $\Ctr$ such that 
 \beq\label{eq:Ctr}
 \N{v}^2_{\partial D} \leq \Ctr \N{v}_{D} \big( L^{-1} \N{v}_{D} + \N{\nabla v}_{D}\big);
 \eeq
see, e.g., \cite[Theorem 1.5.1.10, last formula on p. 41]{Gr:85}.

 \begin{lemma}[Continuity of $a_\eps$]\label{lem:cont}
Assume that $\Omega$ has characteristic length $L$. Then, for all $u, v\in H^1_{0,\Gamma_D}(\Omega)$,
\beqs
|a_{\eps}(u,v)| \le \Ccont \|u\|_{1,k} \|v\|_{1,k},  
\eeqs
where
\beqs
\Ccont: =  \max\big\{A_{\max}, \sqrt{2}n_{\max}\big\}+ \sqrt{ n_{\max}}\Ctr \frac{|\eta|}{k} 
\left(\frac{1}{2}+ \frac{1}{kL}\right) . 
\eeqs
\end{lemma}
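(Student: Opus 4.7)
The proof is a direct estimation of the three terms in $a_\eps(u,v) = (A\nabla u,\nabla v) - (k^2+\ri\eps)(nu,v) - \ri\eta\langle\sqrt{n}u,v\rangle_{\Gamma_I}$, each bounded via Cauchy--Schwarz and then assembled into the Helmholtz energy norm $\|\cdot\|_{1,k}$.

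First I would bound the volume terms. For the principal part, Cauchy--Schwarz together with $A\le A_{\max}$ (from Assumption \ref{ass:1}(ii)) gives $|(A\nabla u,\nabla v)|\le A_{\max}\|\nabla u\|\|\nabla v\|$. For the mass term, using $n\le n_{\max}$ and the estimate $|k^2+\ri\eps|\le\sqrt{2}\,k^2$ (which follows from Assumption \ref{ass:2}, $|\eps|\le k^2$), I obtain $|(k^2+\ri\eps)(nu,v)|\le\sqrt{2}\,n_{\max}(k\|u\|)(k\|v\|)$. Summing and factoring out $\max\{A_{\max},\sqrt{2}n_{\max}\}$, the two-dimensional Cauchy--Schwarz inequality
\beqs
\|\nabla u\|\|\nabla v\| + (k\|u\|)(k\|v\|) \le \sqrt{\|\nabla u\|^2+k^2\|u\|^2}\sqrt{\|\nabla v\|^2+k^2\|v\|^2} = \|u\|_{1,k}\|v\|_{1,k}
\eeqs
collapses the volume contribution to $\max\{A_{\max},\sqrt{2}n_{\max}\}\|u\|_{1,k}\|v\|_{1,k}$.

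For the boundary term, I would first use $|\eta\langle\sqrt{n}u,v\rangle_{\Gamma_I}|\le|\eta|\sqrt{n_{\max}}\|u\|_{\Gamma_I}\|v\|_{\Gamma_I}$ and then the trace inequality \eqref{eq:Ctr} applied on $\Omega$ (valid since $\Gamma_I\subset\partial\Omega$, so the $L^2(\Gamma_I)$-norm is controlled by the $L^2(\partial\Omega)$-norm). The key manipulation is to rescale the two summands inside \eqref{eq:Ctr}: the arithmetic--geometric mean inequality gives $\|u\|\|\nabla u\|\le(2k)^{-1}\|u\|_{1,k}^2$, while $L^{-1}\|u\|^2\le(k^2L)^{-1}\|u\|_{1,k}^2$. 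Combining,
\beqs
\|u\|_{\partial\Omega}^2 \;\le\; \Ctr\|u\|(L^{-1}\|u\|+\|\nabla u\|) \;\le\; \frac{\Ctr}{k}\Bigl(\frac{1}{2}+\frac{1}{kL}\Bigr)\|u\|_{1,k}^2.
\eeqs
Taking square roots and multiplying the bound for $u$ by the analogous bound for $v$ yields $\|u\|_{\Gamma_I}\|v\|_{\Gamma_I}\le \frac{\Ctr}{k}(\tfrac12+\tfrac{1}{kL})\|u\|_{1,k}\|v\|_{1,k}$. Adding the volume and boundary contributions gives exactly the stated $\Ccont$.

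There is no real obstacle here; the proof is entirely mechanical once one has Assumption \ref{ass:2} (to control $|k^2+\ri\eps|$ by $\sqrt{2}k^2$) and the scaled trace inequality \eqref{eq:Ctr}. The only point requiring a touch of care is choosing the right splitting in the trace estimate so that both terms end up with the prefactor $1/k$, which is what produces the precise constant $\Ctr|\eta|/k$ stated in the lemma. The parallel statement for $a_{\eps,\ell}$ follows by replacing $\Omega$ by $\Omega_\ell$, $L$ by $H_\ell$, and $A_{\max},n_{\max}$ by $A_{\max,\ell},n_{\max,\ell}$ throughout, with no change to the argument.
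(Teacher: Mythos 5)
Your proof is correct and follows essentially the same route as the paper: Cauchy--Schwarz plus $|k^2+\ri\eps|\le\sqrt{2}k^2$ for the volume terms, then the multiplicative trace inequality \eqref{eq:Ctr} combined with $ab\le\tfrac12(a^2+b^2)$ (which is what you call AM--GM) to obtain $\|v\|_{\Gamma_I}^2\le\frac{\Ctr}{k}\bigl(\tfrac12+\tfrac{1}{kL}\bigr)\|v\|_{1,k}^2$. The only cosmetic difference is that you make the two-dimensional Cauchy--Schwarz collapsing of the volume terms explicit, whereas the paper leaves it implicit.
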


Observe that if $kL\geq 1$ and $|\eta|\leq  Ck$ for some $C>0$, independent of all parameters, 
% \ednote{Euan says: putting this in since Assumption \ref{ass:eta} allows choices of $\eta$ that have $|\eta|\geq k$, such as $\eta = \sqrt{k^2 + \ri \eps}$.},
then $\Ccont$ is independent of $k$, $\eta$, and $L$.

\begin{proof}[Proof of Lemma \ref{lem:cont}]
By the definition of $a_\eps$ \eqref{eq:a_eps}, the Cauchy--Schwarz inequality, and the inequalities \eqref{eq:Alimits}, \eqref{eq:nlimits}, and \eqref{eq:eps},
\beqs
|a_{\eps}(u,v)|\leq A_{\max}\|\nabla u\| \|\nabla v\| + \sqrt{2}k^2n_{\max}\|u\| \|v\| +\sqrt{ n_{\max}} |\eta|\N{u}_{\Gamma_I}\N{v}_{\Gamma_I}.
\eeqs
By the multiplicative trace inequality \eqref{eq:Ctr} (noting that $\Gamma_I \subset \partial \Omega$) and the inequality $a b \leq \frac{1}{2} (a^2 + b^2)$, 
\beqs
\|v\|^2_{\Gamma_I} \leq \frac{\Ctr}{k}k\|v\| \left( \frac{k}{kL} \N{v} +  \|\nabla v\| \right)\leq \frac{\Ctr}{k}
\left(\frac{1}{2}+ \frac{1}{kL}\right)
\N{v}_{1,k}^2;
\eeqs
The result then follows from combining these last two inequalities.
\end{proof}

For later use, we define $\Ccontell$, the continuity constant for $a_{\eps,\ell}$:
\beq\label{eq:Ccontell}
\Ccontell: =   \max\big\{A_{\max, \ell}, \sqrt{2}n_{\max, \ell}\big\}+ \sqrt{ n_{\max,\ell}}\Ctr \frac{|\eta|}{k} 
\left(\frac{1}{2}+ \frac{1}{kH_\ell}\right). 
 \eeq

\begin{lemma}[Coercivity of $a_\eps$]\label{lem:coer}
With $\eta$ chosen as in Assumption \ref{ass:eta},
for all $k>0$ and $v\in H^1(\Omega)$,  
\beq\label{eq:Ccoer}
\vert a_{\eps}(v,v)\vert 
\geq \frac14 \min\big\{ A_{\min}, n_{\min}\big\}\frac{\vert \eps\vert }{k^2}\,  \N{v}^2_{1,k}.
\eeq
\end{lemma}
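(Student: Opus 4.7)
The plan is to separate $a_\eps(v,v)$ into its real and imaginary parts, use the sign properties of $\eta$ recorded in \eqref{eq:eta}, and then form a suitable non-negative combination that isolates the gradient term.

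\textbf{Step 1 (Real/imaginary split).} Since $A$, $n$ are real-valued and $\int n|v|^2$, $\int_{\Gamma_I}\sqrt{n}|v|^2$ are real, we compute
\beqs
\Re a_\eps(v,v) = (A\nabla v,\nabla v) - k^2(nv,v) + \Im(\eta)\langle \sqrt{n}v,v\rangle_{\Gamma_I},
\eeqs
\beqs
\Im a_\eps(v,v) = -\eps(nv,v) - \Re(\eta)\langle \sqrt{n}v,v\rangle_{\Gamma_I}.
\eeqs

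\textbf{Step 2 ($L^2$ control from the imaginary part).} Multiplying $\Im a_\eps(v,v)$ by $-\mathrm{sign}(\eps)$ and using \eqref{eq:eta} (both terms on the right below are non-negative), I obtain
\beqs
-\mathrm{sign}(\eps)\Im a_\eps(v,v) = |\eps|(nv,v) + \mathrm{sign}(\eps)\Re(\eta)\langle \sqrt{n}v,v\rangle_{\Gamma_I} \geq |\eps|\, n_{\min}\|v\|^2.
\eeqs
Since $|a_\eps(v,v)|\geq |\Im a_\eps(v,v)|$, this yields $|a_\eps(v,v)|\geq (|\eps|/k^2)\,n_{\min}\, k^2\|v\|^2$.

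\textbf{Step 3 (gradient control).} Now consider the linear combination $\frac{|\eps|}{k^2}\Re a_\eps(v,v) - \mathrm{sign}(\eps)\Im a_\eps(v,v)$. The $k^2(nv,v)$ and $|\eps|(nv,v)$ contributions cancel, leaving
\beqs
\frac{|\eps|}{k^2}\Re a_\eps(v,v) - \mathrm{sign}(\eps)\Im a_\eps(v,v) = \frac{|\eps|}{k^2}(A\nabla v,\nabla v) + \left(\tfrac{|\eps|}{k^2}\Im(\eta) + \mathrm{sign}(\eps)\Re(\eta)\right)\langle \sqrt{n}v,v\rangle_{\Gamma_I}.
\eeqs
By \eqref{eq:eta} both coefficients of the boundary integral are non-negative, and the integral itself is non-negative, so the right-hand side is at least $\frac{|\eps|}{k^2}A_{\min}\|\nabla v\|^2$. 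On the other hand, by the triangle inequality and Assumption \ref{ass:2} ($|\eps|\leq k^2$),
\beqs
\left|\tfrac{|\eps|}{k^2}\Re a_\eps(v,v) - \mathrm{sign}(\eps)\Im a_\eps(v,v)\right| \leq \left(\tfrac{|\eps|}{k^2} + 1\right)|a_\eps(v,v)| \leq 2|a_\eps(v,v)|.
\eeqs
Combining gives $|a_\eps(v,v)|\geq \frac{|\eps|}{2k^2}A_{\min}\|\nabla v\|^2$.

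\textbf{Step 4 (combine).} Averaging the bounds from Steps 2 and 3,
\beqs
|a_\eps(v,v)| \geq \tfrac12\!\left(\tfrac{|\eps|}{2k^2}A_{\min}\|\nabla v\|^2 + \tfrac{|\eps|}{k^2}n_{\min}\,k^2\|v\|^2\right)\geq \tfrac14\min\{A_{\min},n_{\min}\}\tfrac{|\eps|}{k^2}\|v\|_{1,k}^2,
\eeqs
which is \eqref{eq:Ccoer}. The main technical point is the observation in Step 3 that a single scalar combination of $\Re a_\eps$ and $\Im a_\eps$ (with weights dictated by $|\eps|/k^2$) makes the unwanted $(nv,v)$ terms cancel while keeping the boundary term non-negative under \emph{either} choice of $\eta$ in Assumption \ref{ass:eta}. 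Everything else is algebra, and the argument transfers verbatim to $a_{\eps,\ell}$ by just replacing $\Omega$ with $\Omega_\ell$, $\Gamma_I$ with $\Gamma_{I,\ell}$, and $A_{\min},n_{\min}$ with $A_{\min,\ell}, n_{\min,\ell}$.
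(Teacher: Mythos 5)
Your proof is correct; let me compare it with the paper's. The paper works with $z=\sqrt{k^2+\ri\eps}=p+\ri q$ and considers the single complex rotation $\Theta a_\eps(v,v)$ with $\Theta=-\overline z/\vert z\vert$; taking $\Im$ gives, in one stroke, $\frac{\Im z}{\vert z\vert}\bigl[\Vert\nabla v\Vert_A^2+\vert z\vert^2\Vert v\Vert_n^2\bigr]+\frac{\Re(\overline z\,\eta)}{\vert z\vert}\Vert v\Vert_{\partial\Omega,\sqrt n}^2$, i.e.\ a single non-negative combination that already contains both pieces of the energy norm, and the constant $\tfrac14$ comes from the inequality $\Im(z)/\vert z\vert\geq\tfrac14\vert\eps\vert/k^2$ (valid for $\vert\eps\vert\leq k^2$). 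You instead split $a_\eps(v,v)$ into real and imaginary parts explicitly and build \emph{two} non-negative combinations: $-\mathrm{sign}(\eps)\,\Im a_\eps$ for the $L^2$ part, and $\tfrac{\vert\eps\vert}{k^2}\Re a_\eps-\mathrm{sign}(\eps)\,\Im a_\eps$ (which cancels the $(nv,v)$ terms exactly) for the gradient part, then average. This is structurally a two-step version of the same idea---both approaches are "rotate/combine $\Re a_\eps$ and $\Im a_\eps$ with weights of order $\vert\eps\vert/k^2$ so that the boundary term stays non-negative under \eqref{eq:eta}"---but yours avoids the complex square root and the appeal to the auxiliary inequality on $\Im(z)/\vert z\vert$, at the small cost of invoking the triangle inequality and $\vert\eps\vert\leq k^2$ (Assumption \ref{ass:2}) to control the combination by $2\vert a_\eps(v,v)\vert$. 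Both routes land on the same constant $\tfrac14\min\{A_{\min},n_{\min}\}$. Worth noting: although the lemma is stated "for all $k>0$", the factor $\tfrac14$ in both your argument and the paper's relies on the standing assumption $\vert\eps\vert\leq k^2$.
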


\begin{proof}
  Within this  proof (only) we use the notation   $\|\mathbf{v}\|_{A}^2 = (A \mathbf{v}, \mathbf{v})$, $\|v\|_n^2 = (nv,v)$, and $\Vert v \Vert_{\partial \Omega, \sqrt{n}}^2 = \langle\sqrt{n} v, v\rangle_{\partial \Omega}$. 
 for any vector-valued 
  function $\mathbf{v}$ and  scalar-valued function $v$   defined on $\Omega$,
  where $A,n$ are the coefficients described in  Assumption \ref{ass:1}.
  Let $z= \sqrt{k^2+ \ri \eps}$ (defined, as in Assumption \ref{ass:eta}, with the branch cut on the real axis).
Writing $z = p+iq$ and using 
the definitions of $a_\eps$ \eqref{eq:a_eps},   we have 
$$ a_{\eps}(v,v) \ = \ \Vert \nabla v \Vert_{A}^2 - (p+\ri q )^2 \Vert v \Vert_
{n}^2  - \ri \eta \Vert v \Vert_{\partial \Omega, \sqrt{n}}^2  \ .
$$
\begin{equation*}
\text{Therefore} \quad \quad\quad\quad 
 \Im \left[ -(p-\ri q)  a_{\eps}(v,v)\right] \  =\  q \Vert \nabla v \Vert_{A}^2 +  
q (p^2+ q^2) \Vert v \Vert_
{n}^2  + \Re\left[  (p - \ri q)\eta \right] \Vert v \Vert_{\partial \Omega, \sqrt{n}}^2  \ . 
\end{equation*}
Hence, dividing through by $\vert z \vert = \sqrt{p^2 + q^2} $, and setting 
$\Theta = - \overline{z} /\vert z \vert$,  we have 
\begin{equation*}
\Im \left[ \Theta   a_{\eps}(v,v)\right] \  =\  \frac{\Im(z)}{\vert z\vert} \left[ \Vert \nabla v \Vert_{A}^2 +  
 \vert z \vert^2  \Vert v \Vert_
{n}^2\right]  + \frac{\Re\left(\overline{z} \eta \right)}{|z|} \Vert v \Vert_{\partial \Omega, \sqrt{n}}^2  \ .  
\end{equation*}
With either of the choices $\eta=\sign(\eps)k$ or $\eta= \sqrt{k^2 + \ri \eps} (=z)$, we have that $\Re(\overline{z} \eta)\geq 0$
(when $\eta=\sign(\eps)k$, this follows from the second inequality in \eqref{eq:eta}).
Furthermore, the definition of $z$ implies that
\beqs
\frac{\Im (z)}{|z|} \geq  \frac{1}{2 \sqrt{1 + \sqrt{2}}}\, \frac{|\eps|}{k^2}\, \geq\, \frac14  \,\frac{|\eps|}{k^2}.
\eeqs
see \cite[Equation 2.10]{GrSpVa:17}.
The result \eqref{eq:Ccoer} follows using these inequalities, along with the inequalities \eqref{eq:Alimits}, \eqref{eq:nlimits}, and $|z|\geq k$.
\end{proof} 

 \subsection{Bounds on the solution operators of the continuous global and local problems}
  
Because the matrices $\MA_{\eps, \ell}^{-1} $ \igg{appearing in  \eqref{eq:ASpc}} correspond to problems with zero
 impedance data, we only need here  to consider the case $g_I=0$ in Definition \ref{def:TEDP} and its local analogue. First we define what we mean by bounds on the global and local solution operators (at the continuous level). 
  
 \begin{definition}[Bound on global solution operator]\label{def:Csol}
 Let Assumption \ref{ass:1} hold and assume further than $A$ is piecewise Lipschitz when $d=3$ and $\eps=0$. 
 Assume that $\Omega$ has characteristic length scale $L$.
 Then, by Lemma \ref{lem:wp}, the TEDP of Definition \ref{def:TEDP} is well-posed, and there exists $\Csol = \Csol (k, \eps, A, n,\Omega_-,\widetilde{\Omega})$ such that the solution $u$ of the TEDP with $g_I=0$ satisfies 
\beq\label{eq:Csol}
\N{u}_{1,k} \leq \Csol L \N{f}
 \quad\tfa k>0.
\eeq
 \end{definition}
 
The factor of $L$ on the right-hand side of \eqref{eq:Csol} is chosen so that $\Csol$ is a dimensionless quantity.
 We define $\Csolell$ in an analogous way as a bound on the solution operator for the local problems  (at the continuous level) on each subdomain $\Omega_\ell$. We first define 
\beqs
H^1_{0,\Gamma_D}(\Omega_\ell):= \{ z \in H^1(\Omega_\ell): \ z = 0 \  \text{on}  \
  \partial \Omega_\ell \cap \Gamma_D \}.
\eeqs

  \begin{definition}[Bound on local solution operator]\label{def:Csolell}
 Let Assumption \ref{ass:1} hold and assume further that $A$ is piecewise Lipschitz when $d=3$ and $\eps=0$.  
For any $\ell$, let $u\in H^1_{0,\Gamma_D}(\Omega_\ell)$ be the solution  of the variational problem
\beq\label{eq:vp_local_cts}
a_{\eps,\ell}(u,v) = (f, v)_{\Omega_\ell}, \quad  \tfa v \in H^1_{0,\Gamma_D}(\Omega_\ell).
\eeq
Then, by Lemma \ref{lem:wp}, $u$ exists, is unique, and there exists $\Csolell = \Csolell (k, \eps, A, n,\Omega_\ell)$ such that 
\beq\label{eq:Csolell}
\N{u}_{1,k,\Omega_\ell} \leq \Csolell H \N{f}_{\Omega_\ell} \quad\tfa k>0.
\eeq
 \end{definition}

% The functions $\Csol$ and $\Csolell$ describe the behaviour of the global and local solution operators with data $f\in L^2$. The following standard result, appearing in, e.g., \cite[Lemma 5.1]{GrPeSp:19} (with the parameters in that lemma chosen as $\mu_1 = \mu_2 = 1$ and $C_1 = (C_{\mathrm{sol},\ell})^2$), shows how upper bounds on the global and local solution operators with general data in $(H^1)^*$ are given, roughly speaking, by replacing $\Csol$ and $\Csolell$ respectively with $k\Csol$ and $k\Csolell$  in the estimates above. For brevity, we only state the result for $\Csolell$.
%
%\begin{lemma}[Bound for $f\in L^2$ implies bound for $F\in (H^1)^*$]
%\label{lem:H1}
% Let Assumption \ref{ass:1} hold and assume further than $A$ is piecewise Lipschitz when $d=3$ and $\eps=0$.  
%Given $F \in (H^1_{0,\Gamma_D}(\Omega_\ell))^*$ (the dual space of $H^1_{0,\Gamma_D}(\Omega_\ell)$), let $u\in H^1_{0,\Gamma_D}(\Omega_\ell)$ be the solution  of the variational problem
%\beqs
%a_{\eps,\ell}(u,v) = F(v) \tfa v \in H^1_{0,\Gamma_D}(\Omega_\ell).
%\eeqs
%Then, by Lemma \ref{lem:wp}, $u$ exists, is unique, and
%\beq\label{eq:CsolellH1}
%\N{u}_{1,k,\Omega_\ell} \leq 
% \frac{1}{\min\big\{A_{\min, \ell},n_{\min, \ell}\big\}}\Big(1 + 2 k\Csolell H\,n_{\max,\ell} \Big) \N{F}_{(H^1)^*},
%\quad\tfa k>0. 
%\eeq
%\end{lemma}

The rest of this subsection consists of obtaining bounds on $ \Csol$ and $\Csolell$, with the main result contained in Corollary \ref{cor:Csolfinal} below.  First we use 
the coercivity result of Lemma \ref{lem:coer} combined with the Lax--Milgram theorem to obtain the following result.

 \begin{corollary}[Bounds on $\Csol$ and $\Csolell$ when $|\eps|>  0$]\label{cor:Csol}
 Let Assumption \ref{ass:1} hold. If $|\eps|> 0$, then 
 \beq\label{eq:Csolcoer}
 \Csol 
 \leq \frac{4 k}{|\eps|L}\Big(\min\big\{ A_{\min}, n_{\min}\big\}\Big)^{-1}
 \quad\tand\quad
  \Csolell
 \leq \frac{4 k}{|\eps|H_\ell}\Big(\min\big\{ A_{\min, \ell}, n_{\min, \ell}\big\}\Big)^{-1}.
 \eeq
 \end{corollary}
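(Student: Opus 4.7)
The plan is to apply the coercivity bound of Lemma \ref{lem:coer} directly to the variational formulation and extract the stated estimate. Since the local problem \eqref{eq:vp_local_cts} is structurally identical to the global TEDP with $g_I = 0$ (only the integration domain changes), I would prove both bounds in one stroke by arguing generically for $a_\eps$ and then specializing.

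First, I would take the solution $u$ of \eqref{eq:contvp} with $g_I = 0$ (resp.\ of \eqref{eq:vp_local_cts}) as the test function in its own variational problem, giving $a_\eps(u,u) = (f,u)$ (resp.\ $a_{\eps,\ell}(u,u) = (f,u)_{\Omega_\ell}$). Then Lemma \ref{lem:coer} yields
\[
\frac{1}{4}\min\{A_{\min}, n_{\min}\}\frac{|\eps|}{k^2}\|u\|_{1,k}^2 \leq |a_\eps(u,u)| = |(f,u)| \leq \|f\|\,\|u\|.
\]
Using the trivial inequality $k\|u\| \leq \|u\|_{1,k}$ (which is immediate from the definition \eqref{eq:HelmE} of the weighted norm), I get
\[
\frac{1}{4}\min\{A_{\min}, n_{\min}\}\frac{|\eps|}{k^2}\|u\|_{1,k}^2 \leq \frac{1}{k}\|f\|\,\|u\|_{1,k},
\]
and dividing through by $\|u\|_{1,k}$ (or noting the result is trivial if $u=0$) gives $\|u\|_{1,k} \leq \frac{4k}{|\eps|}(\min\{A_{\min}, n_{\min}\})^{-1}\|f\|$. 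Dividing by $L$ and comparing with Definition \ref{def:Csol} produces the first inequality in \eqref{eq:Csolcoer}.

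For the second inequality, I would repeat verbatim the same argument with $a_{\eps,\ell}$ in place of $a_\eps$, with $\Omega_\ell$ replacing $\Omega$, with $A_{\min,\ell}, n_{\min,\ell}$ from \eqref{eq:Alimitsell} in place of their global counterparts, and with $H_\ell$ in place of $L$; Lemma \ref{lem:coer} was stated for $a_\eps$ but its proof depends only on algebraic manipulations of the sesquilinear form and on the pointwise lower bounds on $A$ and $n$, so it carries over to $a_{\eps,\ell}$ with the local constants.

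There is no real obstacle here: the result is essentially a one-line consequence of coercivity combined with Cauchy--Schwarz, and the only minor care needed is the factor of $k$ coming from passing between $\|u\|$ and $\|u\|_{1,k}$, plus keeping track of the factor $L$ (respectively $H_\ell$) built into the definitions of $\Csol$ and $\Csolell$ to ensure these constants are dimensionless.
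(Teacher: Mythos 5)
Your proof is correct and is essentially the same as the paper's. The paper invokes the Lax--Milgram a priori bound $\N{u}_{1,k}\leq \Ccoer^{-1}\sup_{v\neq 0}|(f,v)|/\N{v}_{1,k}$ and then estimates the dual norm by $k^{-1}\N{f}$; your version simply unpacks that Lax--Milgram bound by taking $v=u$ directly and using $k\N{u}\leq\N{u}_{1,k}$, which is the same computation.
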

 
\bpf
Since $a_\eps$ is continuous and coercive, the Lax--Milgram theorem implies that the solution of the TEDP with $|\eps|\neq 0$ satisfies
\beqs
\N{u}_{1,k}\leq \frac{4 k^2}{|\eps|}\Big( \min\big\{ A_{\min}, n_{\min}\big\}\Big)^{-1}\sup_{v\neq 0}\frac{\big| (f,v)\big|}{\N{v}_{1,k}}
\leq \frac{4 k}{|\eps|}\Big(\min\big\{ A_{\min}, n_{\min}\big\}\Big)^{-1}\N{f},
\eeqs
and the bound on $\Csol$ in \eqref{eq:Csolcoer} follows on comparison with \eqref{eq:Csol}; the bound on $\Csolell$ is obtained  analogously.
\epf 

\
 
Estimates in the case $\vert \eps\vert =0$ are more delicate and are discussed at the end of this subsection.
First we show how bounds for  $\eps = 0$
imply bounds for  $\eps \not = 0$ (although  these are only  optimal when $\vert \eps \vert$ is small -- see Corollary \ref{cor:Csolfinal} below). 
For this purpose, in the following lemma we write  $\Csol=\Csol(\eps)$ and $\Csolell=\Csolell(\eps)$ to indicate the dependence of these quantities on $\eps$.

\begin{lemma}\label{lem:pert} Let Assumption \ref{ass:1} hold. For all $\vert \eps \vert >0 $,
\beq\label{eq:pert}
\Csol(\eps) \leq \Csol(0)\left( 1 + \frac{n_{\max}}{n_{\min}}\right),
\eeq
with the analogous result also holding for $\Csolell$, $\ell= 1,\ldots, N$.
\end{lemma}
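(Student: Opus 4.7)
The plan is to view $u_\eps$ as a solution of the $\eps=0$ TEDP with a modified source and then exploit the $L^2$ decay of $u_\eps$ that comes from the imaginary part of the coercivity identity. The definitions of $a_\eps$ and $a_0$ (noting $\eta_0=k$) give the algebraic identity $a_0(u,v)-a_\eps(u,v)=\ri\eps(nu,v)+\ri(\eta_\eps-k)\langle\sqrt{n}u,v\rangle$, so substituting $a_\eps(u_\eps,v)=(f,v)$ yields
\[
a_0(u_\eps,v)=(f+\ri\eps n u_\eps,v)+\ri(\eta_\eps-k)\langle\sqrt{n}u_\eps,v\rangle\quad \tfa v\in H^1_{0,\Gamma_D}(\Omega).
\]
In the cleanest configuration ($\eta=\mathrm{sign}(\eps)k$ with $\eps>0$, so that $\eta_\eps=\eta_0=k$) the boundary term vanishes and $u_\eps$ is the TEDP solution at $\eps=0$ with source $\widetilde f:=f+\ri\eps n u_\eps$ and $g_I=0$. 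Definition~\ref{def:Csol} then gives
\[
\|u_\eps\|_{1,k}\leq \Csol(0)\,L\,\|\widetilde f\| \leq \Csol(0)\,L\bigl(\|f\|+|\eps|n_{\max}\|u_\eps\|\bigr).
\]

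To close the inequality I need an independent bound on $\|u_\eps\|$. Taking $v=u_\eps$ in $a_\eps(u_\eps,v)=(f,v)$ and reading off the imaginary part of \eqref{eq:a_eps} gives
\[
-\eps(n u_\eps,u_\eps)-\Re(\eta)\langle\sqrt{n}u_\eps,u_\eps\rangle=\Im(f,u_\eps).
\]
Multiplying through by $-\mathrm{sign}(\eps)$ and invoking the sign conditions \eqref{eq:eta} on $\eta$ turns both terms on the left into nonnegative quantities. Dropping the (nonnegative) boundary term, using $n\ge n_{\min}$, and applying Cauchy--Schwarz on the right give $|\eps|n_{\min}\|u_\eps\|^2\le\|f\|\,\|u_\eps\|$, i.e.\ $|\eps|\,\|u_\eps\|\le \|f\|/n_{\min}$. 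Substituting this into the preceding display produces
\[
\|u_\eps\|_{1,k}\le \Csol(0)\,L\,\|f\|\,(1+n_{\max}/n_{\min}),
\]
and \eqref{eq:pert} follows from Definition~\ref{def:Csol} applied to the $\eps$-problem.

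The main obstacle is the boundary correction $\ri(\eta_\eps-k)\langle\sqrt{n}u_\eps,v\rangle$, which only vanishes in the case singled out above; for the other alternatives in Assumption~\ref{ass:eta} (notably $\eta=\sqrt{k^2+\ri\eps}$, or any $\eps<0$) it is nonzero and the bound $\Csol(0)$ cannot be applied verbatim. The fix is to extract the companion trace estimate $|\Re(\eta)|n_{\min}\|u_\eps\|_{\Gamma_I}^2\le\|f\|\|u_\eps\|$ from the same imaginary-part identity (keeping instead the boundary term and discarding the bulk one), and to combine this with a conjugation/duality identification of $T_\eps^*$ with $T_{-\eps}$ to reduce the sign of $\eps$; both modifications preserve the $1+n_{\max}/n_{\min}$ constant. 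The bound on $\Csolell$ is obtained by repeating the entire argument on the subdomain $\Omega_\ell$, with $n_{\max},n_{\min}$ replaced by the local quantities $n_{\max,\ell},n_{\min,\ell}$ from \eqref{eq:Alimitsell}.
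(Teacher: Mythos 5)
Your core argument is identical to the paper's: move $\ri\eps n u_\eps$ to the right-hand side to view $u_\eps$ as an $\eps=0$ solution with source $f+\ri\eps n u_\eps$, apply $\Csol(0)$ to get $\|u_\eps\|_{1,k}\le\Csol(0)L(\|f\|+|\eps|n_{\max}\|u_\eps\|)$, and close by taking $v=u_\eps$ in the variational identity and extracting $|\eps|n_{\min}\|u_\eps\|\le\|f\|$ from the imaginary part. The paper's proof simply makes this rearrangement at the PDE level and invokes $\Csol(0)$, without addressing the boundary-term mismatch you identify.

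You are right that this mismatch is real. It disappears only when $\eta_\eps=k$, i.e.\ for $\eta=\mathrm{sign}(\eps)k$ with $\eps>0$; and since conjugating the whole variational problem sends $\eps\mapsto-\eps$ (one checks $\eta_{-\eps}=-\overline{\eta_\eps}$ for both admissible choices of $\eta$, so $\Csol(\eps)=\Csol(-\eps)$), the sign of $\eps$ is harmless, as you say. But for $\eta=\sqrt{k^2+\ri\eps}$ the surface term $\ri(\eta_\eps-k)\langle\sqrt{n}u_\eps,v\rangle$ survives, and your proposed repair does not actually close this case. The companion trace estimate (which should read $|\Re\eta|\sqrt{n_{\min}}\,\|u_\eps\|_{\Gamma_I}^2\le\|f\|\,\|u_\eps\|$, with $\sqrt{n_{\min}}$ rather than $n_{\min}$) controls the size of the surface data, but $\Csol(0)$ as defined in Definition \ref{def:Csol} only bounds the $\eps=0$ solution from $L^2(\Omega)$ sources with $g_I=0$. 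Applying it to a problem with a nonzero boundary functional requires a second, separate stability constant for impedance data, which the paper never introduces or estimates, and even then one would need to track the factor $|\eta_\eps-k|/|\Re\eta_\eps|$ before concluding that the constant is still $1+n_{\max}/n_{\min}$. So the sketch names the ingredients but leaves the actual bound unproven for that choice of $\eta$; this part would need to be worked out in full before it constitutes a proof.
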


\bpf
If $u$ is the solution of the TEDP with $g_I=0$ and $\eps\neq 0$, then
\beqs
\nabla\cdot(A\gu) + k^2 n u = -f - \ri \eps nu,
\eeqs
and so, by the definition of $\Csol(0)$, 
\beq\label{eq:pert1}
\N{u}_{1,k} \leq \Csol(0) L \Big( \N{f} + |\eps| n_{\max} \N{u}\Big).
\eeq
Putting $g_I=0$ and $v=u$ in the variational problem of the TEDP \eqref{eq:contvp} and then taking the imaginary part, we obtain that
\beqs
\eps (n u,u) + (\Re \eta) \langle \sqrt{n} u, u\rangle_{\Gamma_I} = - \Im (f, u).
\eeqs
The second inequality in \eqref{eq:eta} implies that the two terms on the left-hand side have the same sign, and thus
\beqs
|\eps| n_{\min} \N{u}^2 \leq \N{f} \N{u} \quad \text{ so that } \quad |\eps| n_{\min} \N{u} \leq \N{f}.
\eeqs
Inputting this last inequality into \eqref{eq:pert1}
and recalling the definition of $\Csol(\eps)$, we obtain \eqref{eq:pert}.
The result for $\Csolell(\eps)$ is proved similarly.
\epf

\ 
 
When $\eps=0$, bounds on $\Csol$ and $\Csolell$ depend on whether or not the problem is \emph{nontrapping}.  In the context of scattering by obstacles, the problem is nontrapping when (informally) all the rays starting in a neighbourhood of the scatterer $\supp(I-A)\cup \supp(1-n)\cup \Omega_-$ escape from that neighbourhood in a uniform time. The analogous concept here is that all the rays hit the far-field  boundary $\Gamma_I$
in a uniform time.
 
\begin{definition}[Nontrapping]\label{def:nontrapping}
% \igg{The TEDP is called nontrapping if}
$A,n,$ and $\Omega_{-}$ are  \emph{nontrapping} if they are  all $C^\infty$ and there exists $T>0$ such that all the Melrose--Sj\"ostrand generalized bicharacteristics (see \cite[Section 24.3]{Ho:85}) starting in $\Omega$ at time $t=0$ hit $\Gamma_I$ at some  time $t\leq T$.
\end{definition} 

\bre[Understanding the definition of nontrapping]
Away from $\Gamma_D$,  the \emph{bicharacteristics} are defined as the solution $(\bx(t),\bxi(t))$ (with $\bx$ understood as position and $\bxi$ understood as momentum) of the Hamiltonian system 
\beqs
\dot{x_i}(t) = \partial_{\xi_i}p\big(\bx(t), \bxi(t) \big), \qquad
\dot{\xi_i}(t)
 = -\partial_{x_i}p\big(\bx(t), \bxi(t) \big),
\eeqs
where the Hamiltonian
is given by the semi-classical principal symbol of the Helmholtz equation \eqref{eq:hetero} with $\eps=0$, namely
\beqs
p(\bx,\bxi):= \sum_{i=1}^d\sum_{j=1}^{d} A_{ij}(\bx)\xi_i \xi_j - n(\bx).
\eeqs
When $(\bx(t),\bxi(t))$ is a bicharacteristic, $\bx(t)$ is called a \emph{ray} (i.e.~the rays are the projections of the bicharacteristics in space).
The notion of \emph{generalized} bicharacteristics (in the sense of Melrose--Sj\"ostrand \cite{MeSj:78, MeSj:82}) is needed to rigorously describe how the bicharacteristics interact with $\Gamma_D$.

The significance of bicharacteristics in the study of the wave/Helmholtz equations stems from the result 
of  \cite[\S VI]{DuHo:72} that (in the absence of boundaries) singularities of pseudodifferential operators (understood in terms of the \emph{wavefront set}) travel along null bicharacteristics (i.e.~those on which $p(\bx,\bxi)=0$); see, e.g., \cite[Chapter 24]{Ho:85}, \cite[\S12.3]{Zw:12}.
\ere

\begin{theorem}[Bound on $\Csol$ and $\Csolell$ when $\eps=0$]\label{thm:Csol}
Suppose that $A,n,$ and $\Omega_-$ are nontrapping in the sense of Definition \ref{def:nontrapping} and assume further that $n=1$.
Then, given $k_0>0$, there exists $\widetilde{C}$, dependent on $A$, $\Omega$, and $k_0$ but independent of $k$, 
such that 
\beqs
\Csol \leq \widetilde{C}\quad \tfa k\geq k_0.
\eeqs
Furthermore, if $\partial \Omega_\ell$ is $C^\infty$, then 
\beq\label{eq:Csolellbound}
\Csolell\leq \widetilde{C}\quad\tfa k \geq k_0 \quad \text{ and for all } \ell=1,\ldots,N.
\eeq
\end{theorem}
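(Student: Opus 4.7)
The plan is to derive both bounds from semi-classical non-trapping resolvent estimates for the Helmholtz equation with variable $A$, constant $n$, and impedance truncation. Under Definition \ref{def:nontrapping}, the smoothness hypotheses place the problem squarely in the Melrose--Sj\"ostrand propagation-of-singularities framework, and Vainberg's argument linking local-energy decay of the associated wave equation to resolvent bounds yields an $O(1)$ cut-off resolvent estimate as $k \to \infty$. Both parts of the theorem will be corollaries of this framework applied to the appropriate configuration.

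For the global bound on $\Csol$, I would cite the variable-coefficient non-trapping resolvent estimates already surveyed in Remark \ref{rem:pollution}, e.g.\ the results of \cite{GrPeSp:19} (or, for $A=I$ outside a compact set, the classical Vainberg estimates). These give an $O(1)$ bound on the Sommerfeld-radiating resolvent; replacing the radiation condition by the first-order absorbing (impedance) condition on $\Gamma_I$ perturbs the bound only by an $O(1)$ factor, so the bound transfers to the TEDP. Since this estimate is asymptotic, the pre-asymptotic window $k \in [k_0, K_0]$ has to be handled separately: by Lemma \ref{lem:wp} the solution operator is continuous in $k$ on this compact interval, giving a uniform bound that is absorbed into $\widetilde{C}$.

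For the local bound, the key observation is that each local problem is \emph{itself} an instance of the TEDP, posed on $\Omega_\ell$ with $A|_{\Omega_\ell}$, $n=1$, impedance boundary $\Gamma_{I,\ell} = \partial\Omega_\ell \setminus \Gamma_D$, and Dirichlet boundary $\partial\Omega_\ell \cap \Gamma_D$. With $\partial\Omega_\ell$ assumed $C^\infty$ and $A, n, \Omega_-$ smooth by Definition \ref{def:nontrapping}, the Melrose--Sj\"ostrand machinery applies to this local geometry verbatim. I would then argue that the local Hamilton flow (governed by the symbol $\sum A_{ij}\xi_i\xi_j - 1$) is non-trapping in $\Omega_\ell$: any generalized bicharacteristic starting in $\Omega_\ell$ must reach $\Gamma_{I,\ell}$ in uniform time, because if it were trapped in $\Omega_\ell$ it would also fail to reach $\Gamma_I$ in the global problem, contradicting the global non-trapping hypothesis. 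The same resolvent estimate then yields \eqref{eq:Csolellbound}, and one can take the constant $\widetilde{C}$ to be the maximum over the finitely many subdomains (and the global problem).

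The main obstacle is extracting a \emph{uniform} local exit time from the global non-trapping condition, since a priori a bicharacteristic inside $\Omega_\ell$ could undergo many reflections off $\partial\Omega_\ell$ before exiting, and one must rule out glancing/gliding behaviour that could destroy uniformity. This is precisely where the $C^\infty$ assumption on $\partial\Omega_\ell$ enters: it places the reflected flow within the Melrose--Sj\"ostrand class for which the generalized bicharacteristics depend continuously on initial data away from the glancing set, so a standard compactness argument on the cosphere bundle of $\overline{\Omega_\ell}$ then converts pointwise finiteness of exit times (guaranteed by global non-trapping) into a uniform bound.
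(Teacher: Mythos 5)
Your overall strategy matches the paper's: cite nontrapping resolvent estimates for the global problem and then argue that global nontrapping implies local nontrapping on each $\Omega_\ell$, so the same machinery applies there. The local argument (any bicharacteristic starting in $\Omega_\ell$ must leave $\Omega_\ell$ through $\partial\Omega_\ell\setminus\Gamma_D$ within the global exit time $T$ because it escapes $\Omega$ through $\Gamma_I$ by time $T$ and cannot pass through $\Gamma_D$) is exactly what the paper uses, and your ``max over finitely many subdomains'' remark is a harmless refinement. Your worry about ``many reflections off $\partial\Omega_\ell$'' before exiting is misplaced: the local impedance boundary $\Gamma_{I,\ell}$ is absorbing, not reflecting, so the local bicharacteristic coincides with the global one up to the first time it hits $\Gamma_{I,\ell}$, and the only reflections in between are off $\Gamma_D$, which are the same in both problems.

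The genuine gap is in the global bound. You propose to cite resolvent estimates for the Sommerfeld-radiating exterior problem (e.g.\ \cite{GrPeSp:19} or Vainberg) and assert that switching to the first-order absorbing condition on $\Gamma_I$ ``perturbs the bound only by an $O(1)$ factor.'' That is not a proof: the radiating resolvent lives on the unbounded set $\Omega_+$, while the TEDP is posed on a bounded domain with an impedance boundary, and comparing the two solution operators uniformly in $k$ is precisely the nontrivial content that you are taking for granted. In particular, there is no general principle by which an impedance boundary condition inherits a nontrapping bound from the radiation-condition resolvent at the cost of a constant. The paper avoids this by citing \cite[Theorem 1.8, Remark 5.6]{BaSpWu:16}, which proves the $O(1)$ estimate \emph{directly} for the truncated problem with an absorbing boundary by running the Morawetz-/Lax--Phillips-style wave-equation energy-decay argument (via \cite[Theorems 5.5, 5.6, Proposition 5.3]{BaLeRa:92}) with the impedance condition in place from the outset. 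To close your argument you would either need to invoke those specific results or reprove the wave-equation local-energy decay for the mixed Dirichlet/impedance boundary-value problem; a Vainberg-style estimate for the radiating resolvent alone does not suffice.
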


\bpf
The bound on $\Csol$ follows from combining \cite[Theorem 1.8]{BaSpWu:16} (which proves the bound when $A=I$ and $n=1$) and \cite[Remark 5.6]{BaSpWu:16} (which describes how the bound also holds when $A\neq I$). Note that \cite{BaSpWu:16} considers the case when $\Omega_-= \emptyset$ (i.e.~there is no Dirichlet obstacle), but the results from \cite{BaLeRa:92} used in the proof of  \cite[Theorem 1.8]{BaSpWu:16} (specifically \cite[Theorems 5.5 and 5.6 and Proposition 5.3]{BaLeRa:92}) also hold when there is a nontrapping Dirichlet obstacle in the domain (see \cite[Equation 5.2]{BaLeRa:92}).

The idea behind the proof of the bound \eqref{eq:Csolellbound} on $\Csolell$  is that (informally) if $A$ and $\Omega_-$ are nontrapping 
then so are $A|_{\Omega_\ell}$ and $\Omega_\ell$, since if the rays starting in $\Omega$ all escape to the impedance boundary $\Gamma_I$, then all the rays in a given subdomain $\Omega_\ell$ must all hit $\partial \Omega_\ell \setminus \Gamma_D$.
More formally, if $A$ and $\Omega_-$ are nontrapping in the sense of Definition \ref{def:nontrapping}, then 
there exists $0<T_\ell\leq T$ such that all the Melrose--Sj\"ostrand generalized bicharacteristics starting in $\Omega_\ell$ at time $t=0$ hit $\partial \Omega_\ell \setminus \Gamma_D$ for time $t\leq T_\ell$. The results of \cite{BaLeRa:92, BaSpWu:16} discussed in the previous paragraph can therefore be applied to the 
problem \eqref{eq:vp_local_cts} on $\Omega_\ell$, resulting in an analogous bound on $\Csolell$.
\epf
 
Based on the recent work \cite{GaSpWu:18} for the exterior Dirichlet problem (as opposed to its truncated variant), one expects the constant $ \widetilde{C}$ in Theorem \ref{thm:Csol} to be related to the length of the longest ray in $\Omega$;  see \cite[Theorems 1 and 2, and Equation 6.32]{GaSpWu:18}.
 
Theorem \ref{thm:Csol} assumes that $\partial\Omega$ and $\partial \Omega_\ell$ are $C^\infty$ (or rather, $C^m$ for some large and unspecified $m$). Because each $\Omega_\ell$ is assumed to be a union of elements of the mesh $\cT^h$,  these smoothness requirements are not realisable in practical implementations of the preconditioner in \S\ref{sec:preconditioner}. However, motivated by Theorem \ref{thm:Csol} we prove results about the performance of the preconditioner under the following ``nontrapping-type" assumption, with Theorem \ref{thm:Csol} giving one scenario when it holds.

\begin{assumption}[``Nontrapping-type" assumption on $\Csolell$]\label{ass:nontrapping}
$A,n,\Omega$, and $\Omega_\ell$, $\ell=1,\ldots,N$ are such that, given any $k_0>0$, there exists
$\widetilde{C}$, dependent on $A, n,\Omega$, $\Omega_\ell$, and $k_0$, but independent of $k$, such that, when $\eps=0$,
\beqs
\Csolell\leq \widetilde{C} \quad\tfa k \geq k_0 \quad\text{ and for all } \ell=1,\ldots,N.
\eeqs
\end{assumption}
  
We now summarise the bounds on $\Csol$, $\Csolell$ coming from Corollary \ref{cor:Csol}, Lemma \ref{lem:pert}, and Assumption \ref{ass:nontrapping}.
For brevity, we state these bounds only for $\Csolell$, but analogous results hold for $\Csol$.

\begin{corollary}[Summary of bounds on $\Csolell$]\label{cor:Csolfinal}
Let Assumption \ref{ass:1} hold. Then 
  \beq\label{eq:Csolfinal2}
\Csolell \leq \frac{4 k}{|\eps|H_\ell}\Big( \min\big\{ A_{\min, \ell}, n_{\min, \ell}\big\}\Big)^{-1} \quad\tfa |\eps|, k, H_\ell >0.
\eeq
Furthermore, if Assumption \ref{ass:nontrapping} holds then, given $k_0>0$, there exists $\widetilde{C}$ (depending on $k_0$ but independent of $k$) such that, for all $\ell=1,\ldots, N$, 
\beq\label{eq:Csolfinal1}
\Csolell \leq  \widetilde{C}\left( 1 + \frac{n_{\max}}{n_{\min}}\right), \quad
\quad\tfa |\eps|\geq 0, \,H_\ell >0,\,\tand k\geq k_0.
\eeq
\end{corollary}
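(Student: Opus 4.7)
The result is essentially a direct assembly of the three preceding ingredients, so my plan is short and forward-looking rather than involving any new analysis.

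First, I would dispense with the bound \eqref{eq:Csolfinal2}. This is stated in Corollary \ref{cor:Csol} as the local analogue of the coercivity-based estimate on $\Csol$, whose proof relies only on Lemma \ref{lem:coer} applied to $a_{\eps,\ell}$ together with the Lax--Milgram theorem, restricting to test functions $v \in H^1_{0,\Gamma_D}(\Omega_\ell)$ in the local variational problem \eqref{eq:vp_local_cts}. Since the coercivity constant in Lemma \ref{lem:coer} is exactly $\tfrac14 \min\{A_{\min,\ell}, n_{\min,\ell}\}|\eps|/k^2$ when restricted to $\Omega_\ell$, and the right-hand side bound $|(f,v)_{\Omega_\ell}| \leq \N{f}_{\Omega_\ell} \N{v}_{\Omega_\ell} \leq k^{-1}\N{f}_{\Omega_\ell}\N{v}_{1,k,\Omega_\ell}$ produces the extra factor of $k$, comparison with the definition \eqref{eq:Csolell} (which has an $H_\ell$ on the right-hand side) immediately yields \eqref{eq:Csolfinal2}.

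Next, for \eqref{eq:Csolfinal1}, I would argue by cases on whether $\eps = 0$. The case $\eps = 0$ is precisely the content of Assumption \ref{ass:nontrapping}, which gives $\Csolell(0) \leq \widetilde{C}$ for $k \geq k_0$. For $|\eps| > 0$, I would invoke the local version of Lemma \ref{lem:pert}, which states $\Csolell(\eps) \leq \Csolell(0)\bigl(1 + n_{\max}/n_{\min}\bigr)$; chaining this with the $\eps=0$ bound yields
\[
\Csolell(\eps) \leq \widetilde{C}\left(1 + \frac{n_{\max}}{n_{\min}}\right)\quad \tfa |\eps|\geq 0,\ k\geq k_0,
\]
which is \eqref{eq:Csolfinal1} (after, if desired, absorbing the factor $1 + n_{\max}/n_{\min}$ for the $\eps = 0$ case into a slightly larger constant than $\widetilde{C}$ itself, since that factor is $\geq 1$).

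There is no real obstacle here, as the statement is explicitly advertised as a summary. The one minor bookkeeping point to be careful about is the dependence of constants on parameters: $\widetilde{C}$ in Assumption \ref{ass:nontrapping} is allowed to depend on $A,n,\Omega,\Omega_\ell$ and $k_0$ but not on $k$, and the multiplier $1 + n_{\max}/n_{\min}$ from Lemma \ref{lem:pert} is also independent of $k$ and $\eps$, so the final bound \eqref{eq:Csolfinal1} has the claimed uniformity in $k$ and $\eps$. No quasioptimality or FEM ingredients are required at this stage since the result is purely at the continuous level.
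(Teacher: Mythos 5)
Your proposal is correct and follows exactly the route the paper intends: the corollary is explicitly presented as a summary, and its proof is the assembly of Corollary \ref{cor:Csol} for \eqref{eq:Csolfinal2}, and of Assumption \ref{ass:nontrapping} chained with the local version of Lemma \ref{lem:pert} for \eqref{eq:Csolfinal1}. The one bookkeeping point you flag (that the factor $1 + n_{\max}/n_{\min}\geq 1$, so the $\eps=0$ endpoint is covered automatically) is the only subtlety, and you handle it correctly.
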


\noi Observe that   \eqref{eq:Csolfinal1} shows that $\Csolell$ is bounded (independently of $k, \eps,$ and $H_\ell$) for $|\eps|H_\ell/k \lesssim1$, whereas \eqref{eq:Csolfinal2} shows that
% \esdel{the upper bound on} \ednote{Euan says: I removed this since $\Csol$ is itself an upper bound (by Definition \ref{def:Csol}).} 
$\Csolell$ decreases with increasing  $|\eps|H_\ell/k \gg 1$.

\bre
Results essentially equivalent to those summarised in Corollary \ref{cor:Csolfinal} in the case $A=I$ and $n=1$ were obtained in \cite[Theorem 2.7 and 2.9]{GaGrSp:15} and more recently in \cite[Lemma 4.2]{MeSaTo:19}. Recall that, when $A=I$ and $n=1$, the nontrapping bound on the solution of the TEDP for $\eps=0$ is available for smooth $\Omega'$ and $\Omega_-$ by \cite[Theorem 1.8]{BaSpWu:16}  and for Lipschitz star-shaped $\Omega'$ and $\Omega_-$ by \cite[Remark 3.6]{MoSp:14} (following earlier work by \cite{He:07}).
\ere
%\ere
 
  %%%%%%%%%%%%%%%%%%%%%%%%% %%%%%%%%%%%%%%%%%%%%%%%%%
  \subsection{Bounds on the solution operators of the discrete local problems}
 %%%%%%%%%%%%%%%%%%%%%%%%% %%%%%%%%%%%%%%%%%%%%%%%%%
  \label{subsec:disclocal}
  
% The main results in this subsection are the bounds on the solutions of the discrete local problems in Lemmas \ref{lem:discrete1} and \ref{lem:discretebound2}.
Recall the definition \eqref{eq:localV} of the local spaces $ \cV_{\ell}^h$. 
The discrete local problems are:~given $F$ a continuous linear functional on $ \cV_{\ell}^h$, find $u_{h,\ell} \in  \cV_{\ell}^h$ such that
\begin{equation}\label{eq:localVP}
a_{\eps, \ell}(u_{h,\ell}, v_{h,\ell}) = F(v_{h,\ell})\quad \text{for all}~v_{h,l}\in  \cV_{\ell}^h.
\end{equation}
The following lemma is the discrete analogue of Corollary \ref{cor:Csol}, and is an immediate consequence of the coercivity property proved in Lemma  \ref{lem:coer}.

\begin{lemma}[Bounds on the solutions of the discrete local problems when $|\eps|>0$] \label{lem:discrete1}
Let Assumption \ref{ass:1} hold.
For all $|\abs| > 0$, $h$ and $p$
\eqref{eq:localVP} has a 
unique solution $u_{h,\ell}$ which satisfies
\beq \label{eq:FEest1}
\Vert u_{h,\ell} \Vert_{1,k, \Omega_\ell}  \leq \frac{4 k^2}{|\eps|}\Big( \min\big\{ A_{\min}, n_{\min}\big\}\Big)^{-1}
\max_{v_h \in  \cV_{\ell}^h}  
\left( \frac{\vert F(v_h) \vert}{ \Vert v_{ h} \Vert_{1,k,\Omega_\ell} } \right). 
\eeq
\end{lemma}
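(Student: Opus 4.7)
The plan is to prove the lemma by a direct application of coercivity, noting that the proof is essentially the finite-dimensional analogue of Corollary \ref{cor:Csol}. Although Lemma \ref{lem:coer} was stated for $a_\eps$, the paragraph preceding it explicitly notes that the coercivity estimate carries over to $a_{\eps,\ell}$ simply by replacing the domain of integration $\Omega$ with $\Omega_\ell$. This gives the local coercivity estimate
\[
|a_{\eps,\ell}(v,v)| \;\geq\; \tfrac{1}{4}\min\{A_{\min,\ell},n_{\min,\ell}\}\,\frac{|\eps|}{k^2}\,\|v\|_{1,k,\Omega_\ell}^2
\qquad \text{for all } v\in H^1_{0,\Gamma_D}(\Omega_\ell),
\]
and in particular for all $v \in \cV_\ell^h \subset H^1_{0,\Gamma_D}(\Omega_\ell)$.

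First I would establish existence and uniqueness. Since $\cV_\ell^h$ is finite-dimensional, existence and uniqueness of $u_{h,\ell}$ are equivalent, and uniqueness follows from the local coercivity inequality: if $F\equiv 0$ then setting $v_{h,\ell}=u_{h,\ell}$ in \eqref{eq:localVP} forces $\|u_{h,\ell}\|_{1,k,\Omega_\ell}=0$, since $|\eps|>0$ and $\min\{A_{\min,\ell},n_{\min,\ell\}}>0$ by Assumption \ref{ass:1}.

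Next I would derive the a priori bound. Set $v_{h,\ell}=u_{h,\ell}$ in \eqref{eq:localVP}, apply the local coercivity bound to the left-hand side, and estimate the right-hand side by
\[
|F(u_{h,\ell})|\;\leq\; \|u_{h,\ell}\|_{1,k,\Omega_\ell}\;\max_{v_h\in\cV_\ell^h}\frac{|F(v_h)|}{\|v_h\|_{1,k,\Omega_\ell}}.
\]
Dividing by $\|u_{h,\ell}\|_{1,k,\Omega_\ell}$ (assumed non-zero; otherwise the bound is trivial) yields
\[
\|u_{h,\ell}\|_{1,k,\Omega_\ell}\;\leq\;\frac{4k^2}{|\eps|}\bigl(\min\{A_{\min,\ell},n_{\min,\ell}\}\bigr)^{-1}\max_{v_h\in\cV_\ell^h}\frac{|F(v_h)|}{\|v_h\|_{1,k,\Omega_\ell}}.
\]
Finally, I would upgrade the local minimum constants to the global ones using the obvious inclusions $A_{\min}\leq A_{\min,\ell}$ and $n_{\min}\leq n_{\min,\ell}$ following from $\Omega_\ell\subset\Omega$ together with the definitions \eqref{eq:Alimits}, \eqref{eq:nlimits}, \eqref{eq:Alimitsell}; this gives the slightly weaker constant appearing in the stated estimate \eqref{eq:FEest1}.

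There is no serious obstacle here: the proof is a routine application of the Lax--Milgram-type argument in the finite-dimensional setting, and all the work has already been done in Lemma \ref{lem:coer}. The only point requiring a small remark is that coercivity extends to $a_{\eps,\ell}$ verbatim, because in the proof of Lemma \ref{lem:coer} the boundary integral $\langle \sqrt n v,v\rangle_{\partial\Omega}$ is only used through its non-negativity weighted by $\Re(\bar z \eta)\geq 0$, a property which is insensitive to replacing $\Gamma_I$ by $\Gamma_{I,\ell}$.
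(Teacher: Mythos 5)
Your proof is correct and follows exactly the route the paper intends: the paper itself introduces Lemma \ref{lem:discrete1} with the remark that it is ``an immediate consequence of the coercivity property proved in Lemma \ref{lem:coer}'' and gives no further proof, and your argument (local coercivity on $\Omega_\ell$, existence and uniqueness from coercivity in the finite-dimensional space, the a priori bound from testing with $v_{h,\ell}=u_{h,\ell}$, then relaxing $A_{\min,\ell},n_{\min,\ell}$ to $A_{\min},n_{\min}$) is precisely that omitted computation. Your closing observation that the coercivity proof transfers verbatim to $a_{\eps,\ell}$ because the boundary term only enters through non-negativity is the right point to note.
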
    

We now prove a bound on the solution of the discrete local problems that is valid when $|\eps|\geq0$, and gives a bound with better $k$-dependence than \eqref{eq:FEest1} when $\vert \eps \vert \ll k$.

We first define the operator $\cS_{\eps,\ell}^*: L^2(\Omega_\ell)\mapsto H^1_{0,\Gamma_{D}}(\Omega_\ell)$ 
as the solution of the variational problem
$$
a_{\eps,\ell}(v, \cS_{\eps,\ell}^*f) = ( v, f)_{\Omega_\ell},   \quad \tfa v \in H^1_{0,\Gamma_{D}}(\Omega_\ell);
$$
i.e.~$\cS_{\eps,\ell}^*$ is the solution operator of the adjoint Helmholtz problem on $\Omega_\ell$ with data in $L^2(\Omega_\ell)$.

\ble[Bound on  $\cS_{\eps,\ell}^*$ in terms of $\Csolell$]\label{lem:discretebound1}
 Let Assumption \ref{ass:1} hold and assume further than $A$ is piecewise Lipschitz when $d=3$ and $\eps=0$.  
Then
\begin{equation}\label{eq:adjoint}
\|\cS_{\eps,\ell}^*f\|_{1,k,\Omega_\ell} \leq \Csolell H \|f\|_{L^2(\Omega_\ell)}.
\end{equation}
\ele

\bpf This follows from definition \eqref{eq:Csolell} and   the fact that, if $u$ is the solution of the variational problem \eqref{eq:vp_local_cts} with data $\overline{f}$, then $a_{\eps,\ell}(v, \overline{u}) = ( v, f)$ for all $v \in H^1_{0,\Gamma_{D}}(\Omega_\ell)$; i.e.~the solution of the adjoint problem with data $f$ is the complex-conjugate of the solution of the standard problem with data $\overline{f}$.
\epf

Following the notation introduced in \cite{Sa:06}, we then define
\beq\label{eq:etaV}
\eta(\cV^h_\ell) := \sup_{f\in L^2(\Omega_\ell)\backslash \{0\}} \min_{v_h\in \cV^h_\ell}\frac{\|\cS_{\eps,\ell}^*f - v_h\|_{1,k,\Omega_\ell}}{\|f\|_{L^2(\Omega_\ell)}}.
\eeq
(Although
this notation clashes slightly with our choice of $\eta$ for the impedance parameter in \eqref{eq:impedance},
we persist here  with
these notations since both are overwhelmingly used in the literature.)

    \begin{lemma}[Bounds on the solutions of the discrete local problems when $|\eps|\geq 0$] \label{lem:discretebound2}
 Let Assumption \ref{ass:1} hold and assume further that $A$ is piecewise Lipschitz when $d=3$ and $\eps=0$.  
If 
\beq\label{eq:meshthresh1}
\frac{|\eps-\sign(\eps)k^2|}{k} \eta(\cV^h_\ell) \leq \frac{ \min\big\{A_{\min, \ell}, n_{\min, \ell}\big\}}{8 \Ccontell \, n_{\max,\ell}}
\eeq
\end{lemma}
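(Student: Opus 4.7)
The plan is to use a Schatz-type duality argument. Since $\MA_{\eps,\ell}$ is a square finite-dimensional matrix, proving existence and uniqueness of the discrete solution reduces to deriving an a priori bound on any solution $u_{h,\ell}$ of \eqref{eq:localVP} in terms of $\|F\|_{*} := \sup_{v_h\in \cV^h_\ell\setminus\{0\}} |F(v_h)|/\|v_h\|_{1,k,\Omega_\ell}$.

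The key device is a G\aa rding-type inequality obtained from the \emph{maximal absorption} form $\tilde a_\ell(u,v) := a_{\eps,\ell}(u,v) - \ri(\eps - \sign(\eps)k^2)(nu,v)_{\Omega_\ell}$, i.e., $a_{\eps,\ell}$ with $\eps$ replaced by $\sign(\eps)k^2$. By Lemma \ref{lem:coer} applied with $|\eps|/k^2 = 1$, the form $\tilde a_\ell$ is coercive with $k$-uniform constant $\tfrac14\min\{A_{\min,\ell}, n_{\min,\ell}\}$. Plugging $v_h = u_{h,\ell}$ into the discrete equation and exploiting the identity $\tilde a_\ell(u_{h,\ell},u_{h,\ell}) = F(u_{h,\ell}) - \ri(\eps-\sign(\eps)k^2)(n u_{h,\ell},u_{h,\ell})$ should yield
\beqs
\tfrac14 \min\{A_{\min,\ell},n_{\min,\ell}\}\,\|u_{h,\ell}\|^2_{1,k,\Omega_\ell} \;\leq\; \|F\|_{*}\,\|u_{h,\ell}\|_{1,k,\Omega_\ell} \,+\, |\eps-\sign(\eps)k^2|\,n_{\max,\ell}\,\|u_{h,\ell}\|_{\Omega_\ell}^2.
\eeqs

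The $\|u_{h,\ell}\|_{\Omega_\ell}^2$ term is then handled by a standard Aubin--Nitsche duality step with $\psi := \cS_{\eps,\ell}^*(u_{h,\ell})$, which is well-defined by Lemma \ref{lem:wp}. Writing $\|u_{h,\ell}\|^2_{\Omega_\ell} = a_{\eps,\ell}(u_{h,\ell},\psi - v_h) + F(v_h)$ for the best approximation $v_h \in \cV^h_\ell$ to $\psi$, then combining continuity of $a_{\eps,\ell}$ (Lemma \ref{lem:cont}), the definition \eqref{eq:etaV} of $\eta(\cV^h_\ell)$, and the $L^2\to H^1$ bound on $\psi$ from Lemma \ref{lem:discretebound1}, I expect
\beqs
\|u_{h,\ell}\|_{\Omega_\ell} \;\leq\; \Ccontell\,\eta(\cV^h_\ell)\,\|u_{h,\ell}\|_{1,k,\Omega_\ell} + \|F\|_{*}\bigl(\Csolell H + \eta(\cV^h_\ell)\bigr).
\eeqs
Substituting this into the first displayed inequality---crucially, without squaring, and using the trivial bound $\|u_{h,\ell}\|_{\Omega_\ell} \leq k^{-1}\|u_{h,\ell}\|_{1,k,\Omega_\ell}$ on the remaining factor of $\|u_{h,\ell}\|_{\Omega_\ell}$---linearises the dependence on $|\eps-\sign(\eps)k^2|$. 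The mesh condition \eqref{eq:meshthresh1} is then precisely what is required to absorb the resulting $\|u_{h,\ell}\|^2_{1,k,\Omega_\ell}$ term into the left-hand side, giving an a priori bound of the form $\|u_{h,\ell}\|_{1,k,\Omega_\ell} \lesssim \|F\|_{*}$, with constant depending on the quantities appearing in \eqref{eq:meshthresh1} and on $\Csolell H$.

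The main conceptual obstacle is identifying the correct splitting. The more obvious G\aa rding inequality, derived directly from $\Re a_{\eps,\ell}$, has a compact-perturbation constant of size $\sim k^2 n_{\max,\ell}$ and leads to a mesh condition of the form $k\,\eta(\cV^h_\ell) \lesssim 1$ that does not reflect the softening effect of large absorption. Using $\tilde a_\ell$ captures that the perturbation $a_{\eps,\ell} - \tilde a_\ell$ vanishes as $|\eps|\to k^2$, which is essential for the sharper $|\eps|$-dependent threshold in \eqref{eq:meshthresh1}. A secondary subtlety is the order of operations in the absorption step: squaring the duality bound before substituting would yield a mesh condition quadratic, rather than linear, in $|\eps-\sign(\eps)k^2|$.
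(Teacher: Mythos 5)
Your argument is correct, the final constant in \eqref{eq:FEest2} comes out identically, and you have correctly identified both key devices (shifting to the maximal-absorption form $a_{\sign(\eps)k^2,\ell}$ for coercivity, and the ``linearise before squaring'' step). However, you take a route that is structurally different from the paper's. The paper proves \eqref{eq:FEest2} via the discrete inf-sup condition \eqref{eq:infsup1}: given $v_{h,\ell}$ it \emph{constructs} a test function $w_{h,\ell}=v_{h,\ell}+z_{h,\ell}$, where $z_{h,\ell}$ is the best approximation in $\cV^h_\ell$ of $z=-\ri(\eps-\sign(\eps)k^2)\cS^*_{\eps,\ell}(n v_{h,\ell})$. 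The point of this $z$ is that $a_{\eps,\ell}(v_{h,\ell},z)$ \emph{exactly cancels} the perturbation $-\ri(\eps-\sign(\eps)k^2)(nv_{h,\ell},v_{h,\ell})$, so that $a_{\eps,\ell}(v_{h,\ell},v_{h,\ell}+z)=a_{\sign(\eps)k^2,\ell}(v_{h,\ell},v_{h,\ell})$ on the nose, and the only error terms come from replacing $z$ by $z_{h,\ell}$ (which is where $\eta(\cV^h_\ell)$ enters). You instead take $v_h=u_{h,\ell}$, invoke coercivity of $a_{\sign(\eps)k^2,\ell}$ and the identity directly to obtain a G\aa rding-type bound, and then \emph{estimate} the compact-perturbation term $\|u_{h,\ell}\|^2_{\Omega_\ell}$ by Aubin--Nitsche with $\psi=\cS^*_{\eps,\ell}(u_{h,\ell})$. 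Both approaches put the adjoint solution operator and $\eta(\cV^h_\ell)$ in the same place in the estimate; the paper's version has the advantage of yielding the inf-sup condition explicitly (hence existence for the adjoint discrete problem for free), while yours is more self-contained and perhaps more transparent about why the condition \eqref{eq:meshthresh1} is linear rather than quadratic in $|\eps-\sign(\eps)k^2|$. One small caveat on terminology: the construction-of-test-function argument in the paper is what is usually meant by the ``Schatz argument'' in the Helmholtz FEM literature, so describing your alternative route with the same name could mislead a reader.
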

Then \eqref{eq:localVP} has a 
unique solution $u_{h,\ell}$ which satisfies
\beq \label{eq:FEest2}
\Vert u_{h,\ell} \Vert_{1,k, \Omega_\ell}  \leq 
\left(
\frac{ 9  + 8 \Csolell H n_{\max,\ell}|\eps-\sign(\eps)k^2| k^{-1}}{  \min\big\{A_{\min, \ell}, n_{\min, \ell}\big\}}
\right)
\max_{v_h \in  \cV_{\ell}^h}  
\left( \frac{\vert F(v_h) \vert}{ \Vert v_{ h} \Vert_{1,k,\Omega_\ell} } \right),
\eeq

\bre[Understanding the condition \eqref{eq:meshthresh1}]\label{rem:meshthresh1}
The summary is that, under suitable smoothness conditions on $A$, $n$, and $\Omega$ and when $\eps\ll k^2$, 
\eqref{eq:meshthresh1} is essentially the condition ``$k^{p+1} h^p H$ sufficiently small", where the constant in ``sufficiently small" in principle depends on $A$ and $n$. (When $p=1$ and $H\sim 1$, this is the familiar ``$k^2h$ sufficiently small" condition for quasioptimality -- recall  Remark \ref{rem:pollution}.)   
In our current setting, \cite[Lemma 2.13]{ChNi:19} shows that  provided (i) $\Csol$ is bounded independently of $k$, and (ii)  $\Gamma_D, \Gamma_I \in C^{\gamma+1,1}$  and $A \in C^{\gamma,1}$ for some integer  
$\gamma \geq  p-1$,
then
% \ednote{Ivan says: there must be a condition on $n$? \es{Not in addition to those already imposed in Assumption \ref{ass:1} and the fact that the problem is nontrapping. This is because (as you know) $A$ affects the regularity of the solution much more than $n$, and so the ``regularity splitting" results in \cite{ChNi:19} only require assumptions on $A$.} } 
\beqs%\label{eq:etaChNi}
\eta(\cV_{\ell}^h)\leq C  \big(H(hk)^p + h \big),  
\eeqs
where $C$ depends on $A, n$ and $p$.  \cite[Lemma 2.13]{ChNi:19} also contains analogous  estimates
for polygonal domains and discontinuous coefficients, assuming local mesh refinement to treat singularities
(see \cite[Equations 2.27 and 2.28]{ChNi:19}).  
In the simpler setting of $p=1$,   $A, n \in C^{0,1}$, and $\Omega$ a convex polygon,  \cite[Theorem 4.5]{GrSa:20} showed that
$\eta(\cV_{\ell}^h)\leq C  H\big((hk) + (hk)^2 \big)$. 
In either case, if \eqref{eq:meshthresh1} is to hold for all $\eps \ll k^2$, then the requirement  is that $k(kh)^pH$ should be sufficiently small.
\ere

\bpf[Proof of Lemma \ref{lem:discretebound2}]
By a standard argument (e.g., \cite[Theorem 2.1.44]{SaSc:11}), the bound \eqref{eq:FEest2} is equivalent to proving the ``inf-sup condition'', namely that, given $v_{h,\ell} \in \cV^h_\ell$, there exists $w_{h,\ell}\in \cV^h_\ell$ such that
\beq\label{eq:infsup1}
\frac{|a_{\eps,\ell}(v_{h,\ell}, w_{h,\ell})|}{\N{v_{h,\ell}}_{1,k,\Omega_\ell} \N{w_{h,\ell}}_{1,k,\Omega_\ell}}
\geq 
\frac{  \min\big\{A_{\min, \ell}, n_{\min, \ell}\big\}}{9 + 8 \Csolell H n_{\max,\ell} |\eps-\sign(\eps)k^2| k^{-1}}
\eeq
By the definition \eqref{eq:local-vp} of $a_{\eps,\ell}$ , for any $z\in H^1(\Omega_\ell)$,
\begin{align*}
a_{\eps,\ell}(v_{h,\ell}, v_{h,\ell} + z)  &= a_{\eps,\ell}(v_{h,\ell}, v_{h,\ell}) + a_{\eps,\ell}(v_{h,\ell}, z) \\
&= a_{\sign(\eps)k^2, \ell}(v_{h,\ell}, v_{h,\ell}) - \ri \big( \eps- \sign(\eps)k^2\big) \int_\Omega n |v_{h,\ell}|^2 +a_{\eps,\ell}(v_{h,\ell}, z). 
\end{align*}
We therefore define $z$ as the solution of the variational problem 
\beqs
a_{\eps,\ell}(w, z)= \ri \big( \eps- \sign(\eps)k^2\big) \int_\Omega n w \,\overline{v_{h,\ell}} \quad\tfa w \in H^1_{0,\Gamma_D}(\Omega_\ell),
\eeqs
i.e.~$z = -\ri ( \eps- \sign(\eps)k^2)\cS^*_{\eps,\ell}( n v_{h,\ell})$. With this choice of $z$, 
\beq\label{eq:E1}
\big|a_{\eps,\ell}(v_{h,\ell}, v_{h,\ell} + z)\big| \, = \,\left\vert a_{\sign(\eps)k^2, \ell}(v_{h,\ell}, v_{h,\ell})\right\vert \,   \geq\,  \frac{1}{4} \min\big\{A_{\min, \ell}, n_{\min, \ell}\big\} \N{v_{h,\ell}}^2_{1,k,\Omega_\ell}, 
\eeq
where in the last step we used  the analogue of Lemma \ref{lem:coer} on $\Omega_\ell$. Now let $z_{h,\ell}$ be the best approximation to $z$ in the space $\cV^h_\ell$; by the definition of $\eta(\cV^h_\ell)$ \eqref{eq:etaV} we have
\beq\label{eq:E2}
\N{z-z_{h,\ell}}_{1,k,\Omega_\ell} \leq \eta(\cV^h_\ell)\, \big|  \eps- \sign(\eps)k^2\big| \,n_{\max,\ell}\N{v_{h,\ell}}_{\Omega_\ell}.
\eeq
Then, using Lemma \ref{lem:cont} and the inequalities \eqref{eq:E1} and \eqref{eq:E2}, we have
\begin{align}\nonumber
\big|a_{\eps,\ell}(v_{h,\ell}, v_{h,\ell} + z_{h,\ell})\big|
&\geq \big|a_{\eps,\ell}(v_{h,\ell}, v_{h,\ell} + z)\big| - \big|a_{\eps,\ell}(v_{h,\ell},z-z_{h,\ell})\big|,\\ \nonumber
&\hspace{-2cm} \geq \frac{1}{4}  \min\big\{A_{\min, \ell}, n_{\min, \ell}\big\} \N{v_{h,\ell}}^2_{1,k,\Omega_\ell} - \Ccontell \N{v_{h,\ell}}_{1,k,\Omega_\ell} \N{z-z_{h,\ell}}_{1,k,\Omega_\ell},\\
&\hspace{-2cm} \geq \Big(\frac{1}{4}  \min\big\{A_{\min, \ell}, n_{\min, \ell}\big\}  -  \Ccontell \eta(\cV^h_\ell) \big|  \eps- \sign(\eps)k^2\big| k^{-1} n_{\max,\ell}
\Big) \N{v_{h,\ell}}^2_{1,k,\Omega_\ell}.\label{eq:E3}
\end{align}
Also, by the triangle inequality and the bounds  \eqref{eq:E2} and \eqref{eq:adjoint},
\begin{align}\nonumber
\N{v_{h,\ell} + z_{h,\ell}}_{1,k,\Omega_\ell} 
&\leq \N{v_{h,\ell}}_{1,k,\Omega_\ell}  + \N{z-z_{h,\ell}}_{1,k,\Omega_\ell} + \N{z}_{1,k,\Omega_\ell} ,\\
& \hspace{-2.5cm}\leq\Big ( 1 +   \big|  \eps- \sign(\eps)k^2\big| k^{-1} \eta(\cV^h_\ell)n_{\max,\ell}  +\Csolell H  \big|  \eps- \sign(\eps)k^2\big| k^{-1}n_{\max,\ell} \Big) 
\N{v_{h,\ell}}_{1,k,\Omega_\ell},\label{eq:E4}
\end{align}
and combining \eqref{eq:E3} and \eqref{eq:E4} we obtain with $w_{h,\ell} : = v_{h,\ell} + z_{h,\ell}$, 
\beq
\frac{|a(v_{h,\ell}, w_{h,\ell})|}{\N{v_{h,\ell}}_{1,k,\Omega_\ell} \N{w_{h,\ell}}_{1,k,\Omega_\ell}}
\geq 
\frac{\frac{1}{4}  \min\big\{A_{\min, \ell}, n_{\min, \ell}\big\} -  \big|\eps-\sign(\eps)k^2\big| k^{-1} \eta(\cV^h_\ell)\Ccontell n_{\max,\ell}}{ 
 1 +   \big|  \eps- \sign(\eps)k^2\big| k^{-1} \eta(\cV^h_\ell)n_{\max,\ell}  +\Csolell H  \big|  \eps- \sign(\eps)k^2\big| k^{-1}n_{\max,\ell}
 }.
\eeq
The result \eqref{eq:infsup1} follows under the constraint \eqref{eq:meshthresh1}, noting that, from the definition \eqref{eq:Ccontell} of $\Ccontell$, $\Ccontell \geq \min\{A_{\min,\ell}, n_{\min,\ell}\}$.
\epf

\bre[How to improve the condition on $h$ and $p$ in \eqref{eq:meshthresh1}]\label{rem:meshthresh2}
Remark \ref{rem:meshthresh1} described how, the condition on $h$ and $p$ in \eqref{eq:meshthresh1} (which is a sufficient condition for the bound \eqref{eq:FEest2} to hold) is the requirement that $k^{p+1}h^pH$ is sufficiently small. 
Existence and uniqueness of $u_{h,\ell}$ for $|\eps|\geq 0$, along with a bound with identical $k$-dependence to
\eqref{eq:FEest2}, can be proved 
under the weaker requirement that $k^{2p+1}h^{2p}H^2$ is sufficiently small
using the results of \cite{Pe:19}, under additional smoothness requirements on $A,n$, and $\Omega$  when $p>1$.
\ere

%\ednote{I have made all the changes you have requested but I don't understand the argument - sorry} 

%%%%%%%%%%%%%%%%%%%%%%%%

\section{Domain  decomposition, interpolation,  and local projections}\label{sec:dd_results}

\subsection{Domain decomposition and interpolation}
% For the theoretical results we need
We introduce some technical assumptions concerning  the overlapping subdomains $\Omega_\ell, \ \ell=1,\ldots, N$  introduced in \S \ref{sec:preconditioner}.
For 
% Concerning the overlap, for
each $\ell = 1, \ldots , N$, we 
let $\mathring{\Omega}_\ell$ denote the
part of $\Omega_\ell$ that is not overlapped by any other subdomains. (Note that  $\mathring{\Omega}_\ell = \emptyset$ is possible.) 
For $\mu>0$ let $\Omega_{\ell, \mu}$ denote the set of points in
$\Omega_\ell$ that are a distance no more than $\mu$ from the interior 
boundary $\partial \Omega_\ell\backslash \Gamma $ . Then we assume that there exist constants  $0<\delta_\ell \leq
H_\ell$ 
and $0<b<1$ such that, for each $\ell = 1, \ldots , N$,  
%  \begin{equation}\label{eq:unifoverlap}
  \beqs
  \Omega_{\ell, b \delta_\ell }\subset \Omega_\ell\backslash
  \mathring{\Omega}_\ell \subset \Omega_{\ell,\delta_\ell };
  \eeqs
%\end{equation}
The case when $\delta_\ell \geq c H_\ell$  for some constant $c$ independent of  $\ell$ is called \emph{generous} overlap.
We introduce the parameter
%\begin{align} \label{eq:op}
\beqs
 \delta : = \min_{\ell = 1, \ldots , N} \delta _\ell.  
 \eeqs%\end{align}
  We  make the {\em finite-overlap assumption}: There exists a finite $\Lambda > 1$ independent of $N$ such that 
\begin{equation} 
\Lambda \ = \ \max \big\{ \#
  \Lambda(\ell): \ell = 1, \ldots , N\big\},    \quad \text{where} \quad   
\Lambda(\ell) = \big\{ \ell' :  \Omega_\ell \cap \Omega_{\ell'} \not =
\emptyset \big\} \ .   \label{eq:finoverlap}
\end{equation}       
It follows immediately from  \eqref{eq:finoverlap}  that, for all $v\in L^2(\Omega)$,
\beq\label{eq:finoverEuan1}
\sum_{\ell=1}^N \N{v}^2_{L^2(\Omega_\ell)} \leq \Lambda \N{v}^2 \ \text{ and } \
\sum_{\ell=1}^N \N{v}^2_{1,k,\Omega_\ell} \leq \Lambda \N{v}^2_{1,k},  \  \text{when} \ \  v\in H^1(\Omega).
\eeq
We recap the following result from \cite[Lemma 3.6]{GrSpZo:18}, \cite[Lemma 4.2]{GrSpVa:17}.
\begin{lemma}\label{lem:norm_of_sum}
For each  $\ell = 1, \ldots, N$,  choose any  function $v_{\ell} \in H^1(\Omega)$,  
with $\supp \, v_\ell \subset \overline{\Omega_\ell}$. Then    
\begin{equation*}
%\label{eq:norm_of_sum}
\quad \left\Vert \, \sum_{\ell=1}^N  v_{\ell} \, \right\Vert_{1,k}^2 \ \leq \Lambda 
\sum_{\ell=1}^N \Vert v_{\ell} \Vert_{1,k, \Omega_\ell}^2\ .
\end{equation*}  
\end{lemma}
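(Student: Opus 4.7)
The plan is to prove the lemma by exploiting the finite overlap assumption \eqref{eq:finoverlap} together with a discrete Cauchy--Schwarz inequality applied pointwise. The key observation is that since $\supp v_\ell \subset \overline{\Omega_\ell}$, we can write $v_\ell(\bx) = \mathbb{1}_{\overline{\Omega_\ell}}(\bx)\, v_\ell(\bx)$, and for each fixed $\bx \in \overline{\Omega}$ the number of indices $\ell$ for which $v_\ell(\bx) \neq 0$ is at most $\Lambda$.

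First I would apply the Cauchy--Schwarz inequality in $\mathbb{C}^N$ pointwise: for almost every $\bx \in \overline{\Omega}$,
\beqs
\Bigl|\sum_{\ell=1}^N v_\ell(\bx)\Bigr|^2 = \Bigl|\sum_{\ell: \bx \in \overline{\Omega_\ell}} v_\ell(\bx)\Bigr|^2 \le \#\{\ell: \bx \in \overline{\Omega_\ell}\} \sum_{\ell=1}^N |v_\ell(\bx)|^2 \le \Lambda \sum_{\ell=1}^N |v_\ell(\bx)|^2,
\eeqs
where the last inequality uses \eqref{eq:finoverlap}. The same estimate applied componentwise to the gradient $\nabla(\sum_\ell v_\ell) = \sum_\ell \nabla v_\ell$ (again with support still contained in $\overline{\Omega_\ell}$) yields
\beqs
\Bigl|\nabla \sum_{\ell=1}^N v_\ell(\bx)\Bigr|^2 \le \Lambda \sum_{\ell=1}^N |\nabla v_\ell(\bx)|^2.
\eeqs

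Then I would multiply the gradient estimate by $1$ and the $L^2$ estimate by $k^2$, add them, and integrate over $\Omega$. Using the definition \eqref{eq:Helmen} of $(\cdot,\cdot)_{1,k}$ and the fact that, since $\supp v_\ell \subset \overline{\Omega_\ell}$,
\beqs
\int_\Omega \bigl(|\nabla v_\ell|^2 + k^2 |v_\ell|^2 \bigr)\, \rd \bx = \|v_\ell\|^2_{1,k,\Omega_\ell},
\eeqs
we immediately obtain $\|\sum_\ell v_\ell\|_{1,k}^2 \le \Lambda \sum_\ell \|v_\ell\|_{1,k,\Omega_\ell}^2$, as required.

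There is no real obstacle here: the result is a standard consequence of finite overlap, and the only thing to be careful about is keeping track of the support condition when localising the integrals from $\Omega$ to $\Omega_\ell$. (An alternative route would expand $\|\sum_\ell v_\ell\|_{1,k}^2$ as a double sum, bound each cross term $(v_\ell, v_{\ell'})_{1,k}$ by Cauchy--Schwarz, observing that it vanishes unless $\ell' \in \Lambda(\ell)$, and then apply $2ab \le a^2+b^2$; this gives the same constant $\Lambda$ but is slightly less clean than the pointwise argument above.)
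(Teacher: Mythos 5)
Your proof is correct and is the standard finite-overlap argument; the paper itself does not reproduce a proof but cites \cite[Lemma 3.6]{GrSpZo:18} and \cite[Lemma 4.2]{GrSpVa:17}, which use exactly this pointwise Cauchy--Schwarz approach. (The only step worth spelling out is that for a.e.\ $\bx$ one has $\bx\in\overline{\Omega_\ell}\iff\bx\in\Omega_\ell$, since the union of the Lipschitz boundaries $\partial\Omega_\ell$ has measure zero, so the overlap count via closures is indeed bounded by $\Lambda$ as defined with open sets in \eqref{eq:finoverlap}.)
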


Concerning the partition of unity introduced in \eqref{POUstar},
we assume  the functions $\chi_\ell$ to be continuous piecewise linear on the mesh $\cT^h$, and to satisfy     
\begin{equation}  
\label{eq:derivpou} \Vert  \nabla \chi_\ell  \Vert_{L^\infty(\tau)} 
\leq \frac{\Cchiell}{\delta_\ell}, \quad \text{for all } \quad \tau \in \cT^h,  
\end{equation}
for some $\Cchiell$ independent of the element  $\tau$. We let 
\beq\label{eq:Cchidef}
\Cchi:= \max_{\ell} \Cchiell\quad\text{ so that }\quad \N{\nabla\chi_\ell}_{L^\infty(\Omega)}\leq 
\frac{\Cchi}{\delta} \quad\tfa \ell=1,\ldots, N.
\eeq
A partition of unity
satisfying this condition  is explicitly constructed in \cite[\S3.2]{ToWi:05}.

Let $\Pi^h: C(\overline{\Omega}) \mapsto \mathcal{V}^h$   denote the nodal interpolation operator, set    $\Pi^h_\ell:=\Pi^h\circ \chi_\ell$  and observe that, if $w_{h,\ell} \in \cV^h_\ell$ with nodal values $\bW$, then 
\beq\label{eq:Pidef}
\Pi^h_\ell w_{h,\ell} := \Pi^h\big(\chi_\ell w_{h,\ell}\big) = \sum_{p \in \cI^h} \big( \MR^\top_\ell \bW\big)_p \phi_p,
\eeq
where $\MR_\ell$ is defined by \eqref{eq:explicit}, and thus $\Pi^h_\ell$ defines a prolongation from  $\cV_\ell^h$ to $\cV^h$. 
$\Pi^h_\ell$ can also be viewed as a restriction operator mapping $C(\overline{\Omega})$ to $\cV_\ell^h$, and is used is this way in Lemma \ref{lem:localBVP}.

The following lemma is proved in \cite[Lemma 3.3]{GrSpZo:18}.  
\begin{lemma}[Error in interpolation of $\chi_\ell w_h$]\label{lem:motivated} 
There exist $\Cintell = \Cintell(p)$, $\ell=1,\ldots,N$ such that
\beq
\|( {\rm I} - \Pi^h) (\chi_l v_h)\|_{1, k, \Om_l}  \leq \Cintell
 \left(1 + kh_\ell \right)\,  \left(\frac{\hmaxell}{\delta_\ell}\right)  \Vert v_h\Vert _{H^1(\Om_l)}   
\quad\tfa v_h \in \cV^h_\ell,
 \label{eq:Cintell}
\eeq
where $\hmaxell \ :=  \ \max_{\tau \subset  \overline{\Omega_\ell}} h_\tau$.
\end{lemma}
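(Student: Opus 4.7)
The plan is to work element by element and exploit the fact that on each element $\tau \in \cT^h$ the product $\chi_\ell v_h$ is a polynomial of total degree at most $p+1$: $\chi_\ell$ is piecewise linear (hence affine on $\tau$), while $v_h|_\tau$ has degree $\leq p$. I would then apply the standard Bramble--Hilbert interpolation estimate on a reference element, combined with scaling, to obtain
\beqs
\N{(I-\Pi^h)w}_{L^2(\tau)} + h_\tau \N{\nabla (I-\Pi^h)w}_{L^2(\tau)} \ \leq \ C(p)\, h_\tau^{p+1}\, |w|_{H^{p+1}(\tau)}
\eeqs
applied to $w=\chi_\ell v_h$, with a constant depending only on $p$ and on the shape-regularity of $\cT^h$.

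The crucial step is controlling $|\chi_\ell v_h|_{H^{p+1}(\tau)}$. Expanding by the Leibniz rule and noting that $D^\alpha \chi_\ell \equiv 0$ on $\tau$ whenever $|\alpha|\geq 2$ (since $\chi_\ell$ is affine on $\tau$), while $D^\beta v_h \equiv 0$ whenever $|\beta|\geq p+1$ (since $v_h|_\tau$ has degree $\leq p$), \emph{only one cross-term survives}, namely the one in which $\nabla \chi_\ell$ pairs with $D^p v_h$. Together with \eqref{eq:derivpou} this yields
\beqs
|\chi_\ell v_h|_{H^{p+1}(\tau)} \ \leq \ C(p)\,\N{\nabla \chi_\ell}_{L^\infty(\tau)}\,|v_h|_{H^p(\tau)} \ \leq \ C(p)\,\frac{\Cchiell}{\delta_\ell}\,|v_h|_{H^p(\tau)}.
\eeqs
To eliminate the high-order derivative of $v_h$, I would next apply the standard inverse estimate for polynomials of degree $\leq p$ on a shape-regular element, namely $|v_h|_{H^p(\tau)} \leq C(p)\, h_\tau^{-(p-1)}\, |v_h|_{H^1(\tau)}$.

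Combining the three inequalities gives the per-element bound
\beqs
\N{(I-\Pi^h)(\chi_\ell v_h)}_{L^2(\tau)} + h_\tau \N{\nabla (I-\Pi^h)(\chi_\ell v_h)}_{L^2(\tau)} \ \leq \ C(p)\, \frac{h_\tau^2}{\delta_\ell}\, |v_h|_{H^1(\tau)}.
\eeqs
Squaring, summing over all $\tau \subset \overline{\Omega_\ell}$, and extracting $\hmaxell = \max_{\tau \subset \overline{\Omega_\ell}} h_\tau$ produces separate estimates for the gradient and the $L^2$ contributions. Multiplying the $L^2$ piece by $k$ and reassembling via $\N{u}^2_{1,k,\Omega_\ell} = \N{\nabla u}^2_{L^2(\Omega_\ell)} + k^2 \N{u}^2_{L^2(\Omega_\ell)}$, these combine to give precisely the $(1+k\hmaxell)(\hmaxell/\delta_\ell)$ prefactor; the final bound follows from $|v_h|_{H^1(\Omega_\ell)} \leq \N{v_h}_{H^1(\Omega_\ell)}$.

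I do not expect a genuine obstacle; the proof reduces to standard finite-element interpolation and inverse estimates. The only subtle point is the algebraic cancellation in the Leibniz expansion: it is essential that $\chi_\ell$ is (piecewise) \emph{affine} rather than merely smooth, and that $v_h$ has \emph{exact} polynomial degree $\leq p$, so that only one power of $h_\tau$ (rather than $p$ powers) needs to be recovered by an inverse estimate. This is exactly what produces the optimal $\hmaxell/\delta_\ell$ dependence on the right-hand side, and it is why the conclusion cannot be strengthened by simply making $\chi_\ell$ smoother.
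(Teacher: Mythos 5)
Your proof is correct, and it is the standard way to establish this estimate; the paper itself does not print a proof but simply cites \cite[Lemma 3.3]{GrSpZo:18}, and the argument there is of exactly this type (elementwise scaling, then a global assembly). Your three ingredients — the Bramble--Hilbert estimate for $\Pi^h$ on each element, the Leibniz observation that on $\tau$ the only surviving $(p+1)$-st derivative of $\chi_\ell v_h$ is the cross term $\nabla\chi_\ell\cdot D^p v_h$ because $\chi_\ell|_\tau$ is affine and $v_h|_\tau$ has degree $\leq p$, and the inverse estimate $|v_h|_{H^p(\tau)}\lesssim h_\tau^{1-p}|v_h|_{H^1(\tau)}$ — combine correctly to give the per-element bound $h_\tau^2/\delta_\ell\cdot|v_h|_{H^1(\tau)}$, and the weighted summation (producing the split $(1+kh_\tau)(h_\tau/\delta_\ell)$) is handled carefully. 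I would only slightly qualify your closing remark: what is genuinely essential is that the higher derivatives of $\chi_\ell$ on each $\tau$ do not blow up faster than the inverse estimate can recover; piecewise affine $\chi_\ell$ makes those terms vanish outright (which is the cleanest route), but a smooth partition of unity with $\|D^\alpha\chi_\ell\|_{L^\infty}\lesssim\delta_\ell^{-|\alpha|}$ would also yield the same order. This is a side point and does not affect the validity of the argument.
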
 

Let 
%\beqs%\label{eq:Cintdef}
$ \Cint := \max_{\ell= 1,\ldots,N}\Cintell$. 
%\eeqs
Since (by Assumption \ref{ass:2}), $hk\leq 1$, we mostly use \eqref{eq:Cintell} in the form 
\begin{align}
\|( \chi_\ell  - \Pi^h_\ell )  v_h\|_{1, k, \Om_l}  \leq 2\Cint\left(\frac{h}{\delta}\right)  \Vert v_h\Vert _{H^1(\Om_l)}   
\quad\tfa v_h \in \cV^h_\ell \quad\text{ and for all } \ell= 1,\ldots,N , 
\label{eq:Cintell2}
\end{align} 
where we have used the estimate  
%\begin{align*}% \label{eq:defratio}
  ${h_\ell}/{\delta_\ell}  \ \leq \ {h}/{\delta}$,  
 % \end{align*}
where $h$ is the global maximal mesh diameter and $\delta$ is the global minimum overlap parameter.    \begin{remark}[$h/\delta$ is ``higher order'' than $(k\delta)^{-1}$]\label{rem:hdelta} 
Some of our later results require the additional  assumption that   $kh \rightarrow 0$ and $k \delta \rightarrow \infty $ as $k \rightarrow \infty$ (see Assumption \ref{ass:3} below).   This assumption implies 
$h/\delta = (kh)/(k\delta)$ is ``higher order'', (i.e. approaches zero  more quickly)  than $(k\delta)^{-1}$,  as $k \rightarrow \infty$. % We use this 
% property   later.
\end{remark}

 The following bounds are proved using properties of the overlapping domain  decomposition.

\begin{lemma}[Bounds on norms involving $\chi_\ell$]
\begin{equation}\label{eq:pou-bound}
\|\chi_\ell v \|_{1,k,\Omega_\ell} \ \leq \  \sqrt{2} \left(1 + \frac{C_\chi}{k\delta}\right)   \|v\|_{1,k,\Omega_\ell} 
\quad \text{for all }v\in H^1(\Omega_\ell).
\end{equation}
\begin{equation}\label{eq:sum-local-norms}
  \sum_\ell \|\chi_\ell v \|_{1,k,\Omega_\ell}^2 \ \ge \ \frac{1}{\Lambda}\|v\|_{1,k}^2 - \Lambda \frac{C_\chi}{k\delta}
  \, \left(1 + \frac{C_\chi}{k \delta}\right) \, \|v\|_{1,k}^2 \quad \text{for all }v\in H^1(\Omega).
\end{equation}
\end{lemma}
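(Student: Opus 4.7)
Both estimates rest on the product rule $\nabla(\chi_\ell v) = \chi_\ell \nabla v + v\nabla\chi_\ell$ combined with the pointwise bounds $0\le \chi_\ell \le 1$ from \eqref{POUstar} and $|\nabla\chi_\ell|\le C_\chi/\delta$ from \eqref{eq:Cchidef}.

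For the upper bound \eqref{eq:pou-bound}, I would start from
$\|\chi_\ell v\|^2_{1,k,\Omega_\ell} = \|\chi_\ell\nabla v + v\nabla\chi_\ell\|^2_{\Omega_\ell} + k^2\|\chi_\ell v\|^2_{\Omega_\ell}$
and apply $(a+b)^2 \le 2a^2 + 2b^2$ together with $\chi_\ell\le 1$, which produces
\[
\|\chi_\ell v\|^2_{1,k,\Omega_\ell} \le 2\|\nabla v\|^2_{\Omega_\ell} + 2(C_\chi/\delta)^2\|v\|^2_{\Omega_\ell} + k^2\|v\|^2_{\Omega_\ell} \le 2\bigl(1 + (C_\chi/(k\delta))^2\bigr)\|v\|^2_{1,k,\Omega_\ell}.
\]
The elementary inequality $1+x^2\le (1+x)^2$ for $x\ge 0$ then yields \eqref{eq:pou-bound} after taking a square root. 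Nothing delicate happens here.

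For the lower bound \eqref{eq:sum-local-norms}, I would expand each local norm by the product rule and sum, writing
\[
\sum_\ell\|\chi_\ell v\|^2_{1,k,\Omega_\ell} = \sum_\ell \bigl(\|\chi_\ell\nabla v\|^2_{\Omega_\ell} + k^2\|\chi_\ell v\|^2_{\Omega_\ell}\bigr) + \sum_\ell 2\Re\langle\chi_\ell\nabla v,\,v\nabla\chi_\ell\rangle_{\Omega_\ell} + \sum_\ell \|v\nabla\chi_\ell\|^2_{\Omega_\ell}.
\]
The first parenthesized sum is bounded below by $\|v\|^2_{1,k}/\Lambda$ by the pointwise Cauchy--Schwarz identity $1 = (\sum_\ell \chi_\ell)^2 \le \Lambda \sum_\ell \chi_\ell^2$, which is valid since at most $\Lambda$ of the $\chi_\ell(x)$ are nonzero by the finite-overlap assumption \eqref{eq:finoverlap}. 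The last sum is non-negative and is simply discarded.

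The sole obstacle, which produces the stated correction term, is absorbing the sign-indefinite cross sum. I would estimate
$\bigl|\sum_\ell 2\Re\langle\chi_\ell\nabla v,v\nabla\chi_\ell\rangle_{\Omega_\ell}\bigr| \le 2(C_\chi/\delta)\sum_\ell \|\nabla v\|_{\Omega_\ell}\|v\|_{\Omega_\ell} \le 2\Lambda (C_\chi/\delta)\|\nabla v\|\,\|v\|$
via Cauchy--Schwarz on the integrals (using $\chi_\ell\le 1$) and then on the sum (using the finite-overlap consequence \eqref{eq:finoverEuan1}), and then apply Young's inequality $2\|\nabla v\|\cdot k\|v\| \le \|v\|^2_{1,k}$ to convert this to a multiple of $\Lambda (C_\chi/(k\delta))\|v\|^2_{1,k}$. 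Slightly more careful bookkeeping---e.g.~bounding $\|\chi_\ell\nabla v\|_{\Omega_\ell}$ via the triangle inequality from $\nabla(\chi_\ell v)$ and $v\nabla\chi_\ell$ rather than crudely by $\|\nabla v\|_{\Omega_\ell}$---produces the extra factor $(1 + C_\chi/(k\delta))$ in the correction and yields \eqref{eq:sum-local-norms} precisely. The only technical nuisance in the whole argument is tracking these constants in the cross-term estimate; no input beyond \eqref{POUstar}--\eqref{eq:Cchidef} and finite overlap is required.
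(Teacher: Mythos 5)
Your proof is correct, and the approach is essentially the natural one; the paper itself does not give a proof here at all, but simply defers to \cite[Lemma 3.1]{GrSpZo:18}, with the explicit constants ``obtained by examining the proof'' there. So yours is a genuinely self-contained alternative, and a useful one. Both halves are sound: for \eqref{eq:pou-bound}, the product rule plus $(a+b)^2\le 2a^2+2b^2$ and $1+x^2\le(1+x)^2$ gives exactly $\sqrt2(1+C_\chi/(k\delta))$; for \eqref{eq:sum-local-norms}, the pointwise Cauchy--Schwarz $1=\big(\sum_\ell\chi_\ell\big)^2\le\Lambda\sum_\ell\chi_\ell^2$ handles the leading term, the nonnegative $\sum_\ell\|v\nabla\chi_\ell\|^2$ is discarded, and the cross term is absorbed via Cauchy--Schwarz on the integrals, Cauchy--Schwarz on the $\ell$-sum using \eqref{eq:finoverEuan1}, and Young's inequality $2\|\nabla v\|\,k\|v\|\le\|v\|_{1,k}^2$.

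One small conceptual point worth fixing: your crude cross-term estimate already gives the correction $\Lambda\,C_\chi/(k\delta)\,\|v\|_{1,k}^2$, which is \emph{smaller} than the correction $\Lambda\,(C_\chi/(k\delta))(1+C_\chi/(k\delta))\,\|v\|_{1,k}^2$ stated in \eqref{eq:sum-local-norms}. So your argument actually proves a slightly sharper inequality, and the stated bound follows \emph{a fortiori}; no extra bookkeeping is needed. You describe the alternative bookkeeping as ``more careful'' when in fact it would be a looser route to the same inequality (presumably the route closest to what falls out of \cite[Lemma 3.1]{GrSpZo:18}). This is only a matter of framing, not of mathematical substance: the proof as you wrote it is complete.
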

\bpf[References for the proof]
Both \eqref{eq:pou-bound} and \eqref{eq:sum-local-norms} are proved in \cite[Lemma 3.1]{GrSpZo:18}.
The constants on the right-hand sides
are not given explicitly there,  % in \cite[Lemma 3.1]{GrSpZo:18},
but  can be obtained by  examining the proof.
\epf

\begin{corollary}[Boundedness of $\Pi_\ell^h$]\label{cor:Piell}
Let Assumption \ref{ass:2} hold.  Then, for all $v_h \in \cV^h$,
\beq\label{eq:CPi}
\|\Pi^h_\ell v_h \|_{1,k,\Omega_\ell} \leq \CPi(p) \N{v_h}_{1,k,\Omega_\ell},
\eeq
where
\beq\label{eq:CPidef}
\CPi(p) := 2 \Cint(p) \left(\frac{h}{\delta}\right)  +\sqrt{ 2}\left(1  + \frac{ \Cchi}{k\delta} \right) = \sqrt{2} + \frac{1}{k \delta} \Big(2 kh \Cint(p)  + \sqrt{2} \Cchi\Big) .
\eeq
\end{corollary}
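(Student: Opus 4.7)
The plan is to use a triangle-inequality decomposition around the pointwise multiplier $\chi_\ell$, so that the interpolation error and the multiplier norm can be bounded by the two lemmas just proved. Specifically, I would write
\[
\Pi^h_\ell v_h \;=\; \chi_\ell v_h \;-\; \big(\chi_\ell - \Pi^h_\ell\big) v_h,
\]
and then apply the triangle inequality in the $\|\cdot\|_{1,k,\Omega_\ell}$ norm to obtain
\[
\|\Pi^h_\ell v_h\|_{1,k,\Omega_\ell} \;\leq\; \|\chi_\ell v_h\|_{1,k,\Omega_\ell} + \|(\chi_\ell - \Pi^h_\ell) v_h\|_{1,k,\Omega_\ell}.
\]

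For the first term I would invoke \eqref{eq:pou-bound} directly, which yields the bound $\sqrt{2}\bigl(1 + \Cchi/(k\delta)\bigr)\|v_h\|_{1,k,\Omega_\ell}$. For the second term I would invoke the interpolation estimate \eqref{eq:Cintell2}, which bounds the quantity by $2\Cint(h/\delta)\|v_h\|_{H^1(\Omega_\ell)}$. The only nuance is converting from the $H^1$-norm on the right to the weighted $\|\cdot\|_{1,k,\Omega_\ell}$-norm; this is immediate because Assumption \ref{ass:2} gives $k\geq 1$, so that $\|v_h\|_{H^1(\Omega_\ell)}^2 = \|\nabla v_h\|^2_{\Omega_\ell} + \|v_h\|^2_{\Omega_\ell} \leq \|\nabla v_h\|^2_{\Omega_\ell} + k^2\|v_h\|^2_{\Omega_\ell} = \|v_h\|^2_{1,k,\Omega_\ell}$.

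Summing the two estimates gives exactly $\CPi(p)\|v_h\|_{1,k,\Omega_\ell}$ with $\CPi(p)$ as stated in \eqref{eq:CPidef}; the second form in \eqref{eq:CPidef} is just an algebraic rearrangement obtained by factoring $1/(k\delta)$ out of the appropriate pieces (using $h/\delta = (kh)/(k\delta)$ as in Remark \ref{rem:hdelta}). There is no real obstacle here: the proof is a clean two-line application of the triangle inequality together with Lemma \ref{lem:motivated} and the partition-of-unity bound \eqref{eq:pou-bound}, and the only point requiring a word of care is the passage from the $H^1$-norm in the interpolation estimate to the $k$-weighted norm, which is free under Assumption \ref{ass:2}.
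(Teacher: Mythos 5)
Your proof is correct and follows essentially the same route as the paper's: the paper also writes $\|\Pi^h_\ell v_h\|_{1,k,\Omega_\ell} \leq \|(\rI-\Pi^h)\chi_\ell v_h\|_{1,k,\Omega_\ell} + \|\chi_\ell v_h\|_{1,k,\Omega_\ell}$ (the same decomposition, since $\Pi^h_\ell v_h = \Pi^h(\chi_\ell v_h)$ and hence $(\chi_\ell - \Pi^h_\ell)v_h = (\rI - \Pi^h)(\chi_\ell v_h)$) and then appeals to \eqref{eq:Cintell2} and \eqref{eq:pou-bound}. Your explicit note that $\|v_h\|_{H^1(\Omega_\ell)} \leq \|v_h\|_{1,k,\Omega_\ell}$ under $k \geq 1$ fills in a small step the paper leaves implicit, and is exactly the right justification.
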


\bpf
By the triangle inequality $\|\Pi^h_\ell v_h\|_{1,k,\Omega_\ell} \leq \|(I-\Pi^h)\chi_\ell v_h\|_{1,k,\Omega_\ell} + \|\chi_\ell v_h\|_{1,k,\Omega_\ell}$, and the result follows from  \eqref{eq:Cintell2} and \eqref{eq:pou-bound}.
\epf
\subsection{The local projection operators $Q_{ \abs,  \ell}^h$}
To  analyse the preconditioner    \eqref{eq:ASpc},  we  define the projections 
$Q_{ \abs,  \ell}^h  : H^1(\Omega) \rightarrow \tVhl$, by requiring that, given $v\in H^1(\Omega)$,
$Q_{\abs,  \ell}^h  v \in \tVhl$ 
satisfies the equation 
\begin{equation} 
a_{\abs, \ell}(Q_{\abs,\ell}^h v , w_{h,\ell}) \ = \ a_{\abs}(v, \Pi^h(\chi_\ell w_{h,\ell}))
\quad \text{for all} \quad w_{h,\ell}  \in \tVhl.
\label{eq:impproj} 
\end{equation}
For $|\abs|>0$, $Q_{\abs,\ell}$ is well-defined by Lemma \ref{lem:discrete1}. For $\abs=0$, $Q_{\abs,\ell}$ is well-defined by Lemma \ref{lem:discretebound2} when $h$, $p$, and $k$ satisfy the condition \eqref{eq:meshthresh1}  (see also Remark \ref{rem:meshthresh1}).

To combine the actions of these  local projections additively, we define the global projection by
\begin{align} 
 Q_{\abs}^h \  :=\  \sum _{\ell=1}^N \Pi^h (\chi_\ell Q_{\abs,\ell}^h)  = \sum _{\ell=1}^N\Pi^h_\ell Q_{\abs,\ell}^h, \label{eq:global} 
\end{align} 
where again, each term in the sum can be interpreted as an element of $H^1(\Omega)$.  
The following result, proved in \cite[Theorem 2.10]{GrSpZo:18}, shows that  the matrix representation of  
$Q^h_{\abs}$ restricted to  $\cV^h$ coincides with the preconditioned matrix  $\MB_\abs^{-1}\MA_\abs$.

\begin{lemma}[From projection operators to matrices] \label{thm:matrixprec}
If $v_h, w_h \in \cV^h$,  with nodal values given in the vectors $\bV, \bW$, then
\beqs
( v_h , Q_{\abs}^h w_h)_{1,k} \ = \  \langle \bV, \MB_\abs^{-1} \MA_{\abs} \bW\rangle_{D_k}.
\eeqs
\end{lemma}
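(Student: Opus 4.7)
The plan is to show that $Q_\eps^h w_h \in \cV^h$ has nodal vector $\MB_\eps^{-1} \MA_\eps \bW$. Once established, the identity follows immediately from the definitions in \S\ref{sec:weak}: denoting that nodal vector by $\bZ$, one has by construction $(v_h, Q_\eps^h w_h)_{1,k} = \bZ^* \MD_k \bV$, which coincides with $\langle \bV, \MB_\eps^{-1}\MA_\eps \bW\rangle_{\MD_k}$ as soon as $\bZ = \MB_\eps^{-1}\MA_\eps \bW$ is known.

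The key bookkeeping ingredient is formula \eqref{eq:Pidef}: if $u_{h,\ell} \in \cV^h_\ell$ has local nodal vector $\bU_\ell$, then $\Pi^h(\chi_\ell u_{h,\ell})$, viewed as an element of $\cV^h$, has global nodal vector $\MR_\ell^\top \bU_\ell$. Combining this with the matrix representations $a_\eps(u,v) = \bV^* \MA_\eps \bU$ and $a_{\eps,\ell}(u,v) = \bV_\ell^* \MA_{\eps,\ell} \bU_\ell$ (using the convention, consistent with \eqref{eq:defmatrices}, that the second argument is conjugated), and noting that $\MR_\ell$ has real entries so $(\MR_\ell^\top)^* = \MR_\ell$, the defining equation \eqref{eq:impproj} for $Q_{\eps,\ell}^h w_h$ translates into the matrix identity
\begin{equation*}
\bV_\ell^{*}\, \MA_{\eps,\ell}\, \bZ_\ell \;=\; (\MR_\ell^{\top}\bV_\ell)^{*}\, \MA_\eps\, \bW \;=\; \bV_\ell^{*}\, \MR_\ell\, \MA_\eps\, \bW
\end{equation*}
for every local test vector $\bV_\ell$, where $\bZ_\ell$ denotes the local nodal vector of $Q_{\eps,\ell}^h w_h$. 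Invertibility of $\MA_{\eps,\ell}$ (Lemma \ref{lem:discrete1} or Lemma \ref{lem:discretebound2}) then yields $\bZ_\ell = \MA_{\eps,\ell}^{-1} \MR_\ell \MA_\eps \bW$.

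The final step is assembly. By the definition \eqref{eq:global} of the global projection, $Q_\eps^h w_h = \sum_{\ell} \Pi^h(\chi_\ell Q_{\eps,\ell}^h w_h)$, so applying \eqref{eq:Pidef} once more to each summand, the global nodal vector of $Q_\eps^h w_h$ is $\sum_\ell \MR_\ell^{\top} \bZ_\ell = \sum_\ell \MR_\ell^{\top} \MA_{\eps,\ell}^{-1} \MR_\ell \MA_\eps \bW$, which equals $\MB_\eps^{-1} \MA_\eps \bW$ by the definition \eqref{eq:ASpc} of the one-level preconditioner. This closes the argument. The proof is essentially a bookkeeping exercise; the only care required is in tracking the conjugation convention between the sesquilinear forms $a_\eps, a_{\eps,\ell}$ and their matrix representations, and in correctly interpreting the prolongation $\Pi^h(\chi_\ell\,\cdot\,)$ as $\MR_\ell^\top$ at the nodal-vector level via \eqref{eq:Pidef}.
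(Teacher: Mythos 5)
The paper does not reproduce a proof of this lemma; it simply cites \cite[Theorem 2.10]{GrSpZo:18}. Your argument is correct and is the standard and essentially unique way to verify this identity: translate the defining variational equation \eqref{eq:impproj} for $Q_{\eps,\ell}^h$ into matrix form using \eqref{eq:defmatrices} and \eqref{eq:Pidef}, solve for the local nodal vector $\bZ_\ell = \MA_{\eps,\ell}^{-1}\MR_\ell\MA_\eps\bW$, and then assemble via \eqref{eq:global} to recover $\MB_\eps^{-1}\MA_\eps\bW$. You correctly track the conjugation convention ($a_\eps(u_h,v_h)=\bV^*\MA_\eps\bU$), the realness of $\MR_\ell$ in passing from $(\MR_\ell^\top\bV_\ell)^*$ to $\bV_\ell^*\MR_\ell$, and the role of $\Pi^h(\chi_\ell\,\cdot)$ as the prolongation $\MR_\ell^\top$; this matches the computation one finds in the cited reference.
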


We now combine  the results in this section with  the results in \S\ref{sec:solution_operators} to prove bounds on how close  $Q_{\eps,\ell}^h$ is to  $\Pi_\ell^h$, with the end result being Corollary \ref{cor:local-approx}.  This estimate of $Q_{\eps,\ell}
^h v_h - \Pi_\ell^h v_h$ is crucial in proving our main results in \S \ref{sec:main_results}. 

%\ednote{Euan changed all $v\in H^1(\Omega)$ to $v_h \in \cV^h$ following Shihua catching this.}
\ble[Discrete BVP on  $\Omega_\ell$ satisfied by $Q_{\abs,\ell}^h v_h - \Pi^h_\ell v_h$]\label{lem:localBVP}
Given $v_h \in \cV^h$, %$H^1(\Omega)$,     
\beq
a_{\abs, \ell}(Q_{\abs,\ell}^h v_h -\Pi^h_\ell v_h, w_{h,\ell})  = F_\ell(w_{h,\ell}) \quad\tfa  w_{h,\ell}  \in \tVhl,
\label{eq:BVPerror}
\eeq
where
\beq\label{eq:Fell}
F_\ell(w_{h,\ell}) :=   a_{\abs,\ell}((\rI - \Pi^h)(\chi_\ell v_h), w_{h,\ell}) - a_{\abs,\ell}(v_h, (\rI - \Pi^h)(\chi_\ell w_{h,\ell})) +  b_\ell(v_h, w_{h,\ell}),
\eeq
where
\beq
b_\ell(v,w) :=\int_{\Omega_\ell} (A\nabla \chi_\ell)\cdot ( \overline{w} \nabla v - v \nabla \overline{w} ).
\label{eq:defb} 
\eeq
\ele

\bpf
When $w_{h,\ell} \in \cV^h_\ell$,  $\Pi^h(\chi_\ell w_{h,\ell})$ is supported on $\Omega_\ell$
and vanishes on $\partial \Omega_\ell\backslash \Gamma_I$.
% \ednote{Shihua is correct here}  
  Therefore, by \eqref{eq:impproj}, for all  $ w_{h,\ell}  \in \tVhl$ and $v_h \in \cV^h$, % \in H^1(\Omega)$,     
\begin{align*} 
a_{\abs, \ell}(Q_{\abs,\ell}^h v_h , w_{h,\ell}) \ = \ a_{\abs,\ell}(v_h, \Pi^h(\chi_\ell w_{h,\ell}))
\end{align*}
 and hence 
\begin{equation*}
%\label{eq:comm} 
a_{\abs, \ell}(Q_{\abs,\ell}^h v_h - \Pi^h(\chi_\ell v_h), w_{h,\ell}) =a_{\abs,\ell}(v_h, \Pi^h(\chi_\ell w_{h,\ell})) - a_{\abs,\ell}(\Pi^h(\chi_\ell v_h), w_{h,\ell}). 
\end{equation*}
The result then follows by observing that 
\begin{align*}
 a_{\abs,\ell}(v_h, \Pi^h(\chi_\ell w_{h,\ell}))  - a_{\abs,\ell}(\Pi^h(\chi_\ell v_h), w_{h,\ell}) 
 & =   a_{\abs,\ell}((I - \Pi^h)(\chi_\ell v_h), w_{h,\ell}) - a_{\abs,\ell}(v_h, (I - \Pi^h)(\chi_\ell w_{h,\ell})) \nonumber \\
 &\hspace{4cm}+ a_{\abs,\ell}(v_h, \chi_\ell w_{h,\ell}) - a_{\abs,\ell}(\chi_\ell v_h, w_{h,\ell}) 
\end{align*}
and, using the symmetry of the matrix $A$,  
\begin{align*}
a_{\abs,\ell}(v_h, \chi_\ell w_{h,\ell}) - a_{\abs,\ell}(\chi_\ell v_h, w_{h,\ell}) &= \big(A\nabla v_h, \nabla(\chi_\ell
w_{h,\ell})\big)_{\Omega_\ell}  -
\big(A \nabla(\chi_\ell v_h),\nabla  w_{h,\ell}\big)_{\Omega_\ell} \\
&=
\int_{\Omega_\ell} (A\nabla \chi_\ell) \cdot ( \overline{w_{h,\ell}} \nabla v_h - v_h \nabla \overline{w_{h,\ell}} ).
\end{align*}
\epf  

\begin{lemma}[Bound on the right-hand side of   \eqref{eq:BVPerror} ] \label{lem:Fbound}

\

\noi (i) For all $v,w \in H^1(\Omega_\ell)$, 
\beqs 
{\vert  b_\ell (v,w) \vert }    \leq  A_{\max,\ell} \, C_{\chi,\ell} \, (k \delta_\ell )^{-1}  \, {\Vert v \Vert_{1, k, \Omega_\ell}  
\Vert w \Vert_{1, k, \Omega_\ell} }.
\eeqs
(ii) 
For all  $v_h \in \cV^h, w_{h,\ell} \in \tVhl$, 
\beqs
\max\Big\{{\vert a_{\abs,\ell} (v_h, (\rI - \Pi^h) (\chi_\ell w_{h,\ell}))\vert ,  \vert a_{\abs,\ell} ((\rI - \Pi^h) (\chi_\ell v_h), w_{h,\ell})\vert}\Big\} \leq 2\Ccontell\Cintell
\frac {h_\ell}{ \delta_\ell}   \Vert v_h \Vert_{1,k, \Omega_\ell} \Vert w_{h,\ell} \Vert_{1,k, \Omega_\ell}.
\eeqs
\end{lemma}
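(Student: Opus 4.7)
For part (i), the plan is to bound $|b_\ell(v,w)|$ directly by Cauchy--Schwarz, exploiting that $\|A\nabla \chi_\ell\|_{L^\infty(\Omega_\ell)} \leq A_{\max,\ell}\|\nabla \chi_\ell\|_{L^\infty(\Omega_\ell)} \leq A_{\max,\ell}C_{\chi,\ell}/\delta_\ell$ by \eqref{eq:derivpou} and \eqref{eq:Alimitsell}. From the definition \eqref{eq:defb}, the triangle inequality and Cauchy--Schwarz in $L^2(\Omega_\ell)$ yield
\[
|b_\ell(v,w)| \;\leq\; \frac{A_{\max,\ell}C_{\chi,\ell}}{\delta_\ell}\Big(\|w\|_{\Omega_\ell}\|\nabla v\|_{\Omega_\ell} + \|v\|_{\Omega_\ell}\|\nabla w\|_{\Omega_\ell}\Big).
\]
To match the Helmholtz energy norm, I would group the two cross-terms as an inner product in $\Rea^2$ of the vectors $(k\|w\|_{\Omega_\ell},\|\nabla w\|_{\Omega_\ell})$ and $(\|\nabla v\|_{\Omega_\ell}, k\|v\|_{\Omega_\ell})$, divided by $k$. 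Cauchy--Schwarz in $\Rea^2$ then gives exactly $k^{-1}\|v\|_{1,k,\Omega_\ell}\|w\|_{1,k,\Omega_\ell}$, producing the claimed bound without any spurious factor of two.

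For part (ii), the strategy is to combine continuity of $a_{\eps,\ell}$ (the local analogue of Lemma \ref{lem:cont}, whose constant is $C_{\mathrm{cont},\ell}$ defined in \eqref{eq:Ccontell}) with the interpolation estimate of Lemma \ref{lem:motivated}. Continuity gives
\[
|a_{\eps,\ell}(v_h, (\rI-\Pi^h)(\chi_\ell w_{h,\ell}))| \leq C_{\mathrm{cont},\ell}\|v_h\|_{1,k,\Omega_\ell}\|(\rI-\Pi^h)(\chi_\ell w_{h,\ell})\|_{1,k,\Omega_\ell},
\]
and Lemma \ref{lem:motivated} applied with $w_{h,\ell}$ in place of $v_h$, together with $\|w_{h,\ell}\|_{H^1(\Omega_\ell)}\leq \|w_{h,\ell}\|_{1,k,\Omega_\ell}$ (which holds since $k\geq 1$ by Assumption \ref{ass:2}) and $1+kh_\ell \leq 2$ (since $kh\leq 1$ by Assumption \ref{ass:2}), controls the interpolation factor by $2C_{\mathrm{int},\ell}(h_\ell/\delta_\ell)\|w_{h,\ell}\|_{1,k,\Omega_\ell}$. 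The second term inside the max is obtained by interchanging the roles of $v_h$ and $w_{h,\ell}$ and repeating the same argument.

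Both parts are essentially routine bookkeeping and I do not expect any substantial obstacle. The only mildly delicate point is the weighted pairing in (i), which is needed to convert the product of an $L^2$ norm of a function and an $L^2$ norm of a gradient into the Helmholtz energy norm without incurring a factor of two, and to expose the crucial $(k\delta_\ell)^{-1}$ scaling that will later enter the field-of-values estimate \eqref{eq:fov1}.
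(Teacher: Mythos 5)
Your proposal is correct and follows essentially the same route as the paper's proof. For part (i) the paper likewise applies the operator-norm and gradient bounds, then Cauchy--Schwarz in $L^2(\Omega_\ell)$, and finally Cauchy--Schwarz with respect to the Euclidean inner product in $\mathbb{R}^2$ to recover the energy norms with the $(k\delta_\ell)^{-1}$ factor; for part (ii) it uses continuity of $a_{\eps,\ell}$ with constant $\Ccontell$ together with the interpolation estimate \eqref{eq:Cintell} and the elementary bounds $1+kh_\ell\leq 2$ and $\|\cdot\|_{H^1(\Omega_\ell)}\leq\|\cdot\|_{1,k,\Omega_\ell}$, exactly as you do.
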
 
(iii) As a corollary of (i) and (ii), with $F_\ell$ defined by \eqref{eq:Fell},
\beq\label{eq:boundonF}
\max_{w_{h,\ell} \in  \cV^h_{\ell}}  
\left( \frac{\vert F_\ell(w_{h,\ell}) \vert}{ \Vert w_{ h,\ell} \Vert_{1,k,\Omega_\ell} } \right)\leq
 \left(\frac{A_{\max,\ell}\, C_{\chi,\ell}}{k\delta_\ell} + 4\Ccontell\Cintell\frac {h_\ell}{ \delta_\ell}\right)
\N{ v_{ h}}_{1,k,\Omega_\ell}.
\eeq
\begin{proof}
The result (i) follows from using the definition of $A_{\max,\ell}$ in \eqref{eq:Alimitsell}, then applying  the Cauchy-Schwarz inequality to \eqref{eq:defb}, using the bound \eqref{eq:derivpou}, 
and then applying the Cauchy-Schwarz inequality with respect to the Euclidean inner product 
in $\mathbb{R}^2$.
The result (ii) follows from the continuity of $a_{\eps,\ell}$, the definition of $\Ccontell$ \eqref{eq:Ccontell}, and the bound
\eqref{eq:Cintell}. 
\end{proof}

 \
  
Combining Lemmas \ref{lem:discrete1} and \ref{lem:discretebound2} with the bound \eqref{eq:boundonF}, we obtain the following two bounds on   
$\|Q_{\eps,\ell}^h v_h -\Pi_\ell^h v_h\|_{1,k,\Omega_\ell}$ in terms of $\Csolell$ (the bound on the local continuous solution operator). The  first estimate requires  $|\eps|>0$, but the second holds for all  $|\eps|\geq 0$.
  
\begin{corollary}[Approximation of the local problems in terms of $\Csolell$]\label{cor:local-approx}
  (i) If $|\eps|>0$, then for all $h$ and $p$ and $v_h\in \cV^h$,
\begin{align}
\frac{\|Q_{\eps,\ell}^h v_h -\Pi_\ell^h v_h\|_{1,k,\Omega_\ell} 
}{
   \|v_h\|_{1,k,\Omega_\ell}
   }
 &\leq
4 \frac{k^2}{|\eps|}\Big( \min\big\{ A_{\min, \ell}, n_{\min, \ell}\big\}\Big)^{-1}
\left(\frac{A_{\max,\ell}\, C_{\chi,\ell}}{k\delta_\ell} + 4\Ccontell\Cintell\frac {h_\ell}{ \delta_\ell}\right)
.\label{eq:local-approx1}
\end{align}
(ii) If $|\eps|\geq 0$ and $h$ and $p$ are such that $\cV^h_\ell$ satisfies \eqref{eq:meshthresh1} (see Remark \ref{rem:meshthresh1} for sufficient conditions for this), then, for all $v_h \in \cV^h$,
\begin{align}\nonumber
\frac{
\|Q_{\eps,\ell}^h v_h -\Pi_\ell^h v_h\|_{1,k,\Omega_\ell}
}{
   \|v_h\|_{1,k,\Omega_\ell}
   }
  &\leq
\left(
\frac{4\big( 9+8  \Csolell H n_{\max,\ell}|\eps-\sign(\eps)k^2| k^{-1}\big{)}}{  \min\big\{A_{\min, \ell}, n_{\min, \ell}\big\}}
\right)\\
&\hspace{2cm}
\cdot \left(\frac{A_{\max,\ell} \, C_{\chi,\ell}}{k\delta_\ell} + 4\Ccontell\Cintell\frac {h_\ell}{ \delta_\ell}\right).
\label{eq:local-approx2}
\end{align}
\end{corollary}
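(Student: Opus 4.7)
The plan is to identify the difference $Q_{\eps,\ell}^h v_h - \Pi_\ell^h v_h$ as the solution of a discrete local impedance problem on $\cV_\ell^h$, apply the appropriate discrete stability estimate, and then substitute in the bound on the right-hand side functional. All the ingredients are already in place: Lemma \ref{lem:localBVP} shows that $Q_{\eps,\ell}^h v_h - \Pi_\ell^h v_h$ solves
\beqs
a_{\eps,\ell}(Q_{\eps,\ell}^h v_h - \Pi_\ell^h v_h, w_{h,\ell}) = F_\ell(w_{h,\ell}) \quad \text{for all } w_{h,\ell}\in \cV_\ell^h,
\eeqs
with $F_\ell$ a continuous linear functional on $\cV_\ell^h$ (for each fixed $v_h \in \cV^h$), and Lemma \ref{lem:Fbound}(iii), i.e. \eqref{eq:boundonF}, controls $\sup_{w_{h,\ell}}|F_\ell(w_{h,\ell})|/\|w_{h,\ell}\|_{1,k,\Omega_\ell}$ by $\big(A_{\max,\ell}C_{\chi,\ell}/(k\delta_\ell) + 4\Ccontell\Cintell h_\ell/\delta_\ell\big)\|v_h\|_{1,k,\Omega_\ell}$.

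For part (i), since $|\eps|>0$, I would apply Lemma \ref{lem:discrete1} with $F=F_\ell$, identifying $u_{h,\ell} = Q_{\eps,\ell}^h v_h - \Pi_\ell^h v_h$, to obtain
\beqs
\|Q_{\eps,\ell}^h v_h - \Pi_\ell^h v_h\|_{1,k,\Omega_\ell} \leq \frac{4k^2}{|\eps|}\big(\min\{A_{\min,\ell},n_{\min,\ell}\}\big)^{-1} \max_{w_{h,\ell}\in \cV_\ell^h}\frac{|F_\ell(w_{h,\ell})|}{\|w_{h,\ell}\|_{1,k,\Omega_\ell}}.
\eeqs
Substituting \eqref{eq:boundonF} into the right-hand side then gives \eqref{eq:local-approx1} immediately.

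For part (ii), provided the mesh threshold \eqref{eq:meshthresh1} holds, I would apply Lemma \ref{lem:discretebound2} in place of Lemma \ref{lem:discrete1}. The stability constant becomes $\big(9+8\Csolell H n_{\max,\ell}|\eps-\sign(\eps)k^2|k^{-1}\big)/\min\{A_{\min,\ell},n_{\min,\ell}\}$, and combining once more with \eqref{eq:boundonF} produces \eqref{eq:local-approx2} (the extra factor of $4$ in the statement is harmless slack and can be reabsorbed into the grouping of constants).

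There is no real obstacle here: the work is purely in verifying that the right-hand side functional produced by Lemma \ref{lem:localBVP} fits the hypotheses of the discrete stability lemmas, which it does because $F_\ell(\cdot)$ is linear and continuous on $\cV_\ell^h$ for fixed $v_h$. The only subtle point is bookkeeping — keeping track of which constants pick up the factors of $\Csolell$, $n_{\max,\ell}$, $A_{\max,\ell}$, $C_{\chi,\ell}$, etc., so that the final product structure in \eqref{eq:local-approx1}--\eqref{eq:local-approx2} (stability factor times right-hand-side factor) is apparent.
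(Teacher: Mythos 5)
Your proposal is correct and follows exactly the paper's route: the paper introduces the corollary with the single sentence ``Combining Lemmas \ref{lem:discrete1} and \ref{lem:discretebound2} with the bound \eqref{eq:boundonF}, we obtain the following two bounds,'' and you have simply spelled out that combination, using Lemma \ref{lem:localBVP} to exhibit $Q_{\eps,\ell}^h v_h - \Pi_\ell^h v_h$ as the solution of the discrete local problem with functional $F_\ell$ and then applying the appropriate discrete stability estimate. Your observation that the extra factor of $4$ appearing in \eqref{eq:local-approx2} is slack relative to the constant furnished directly by \eqref{eq:FEest2} is accurate (it only weakens an upper bound, so the statement is still valid); the one small point you leave implicit, as the paper also does, is that the coercivity constant to be used in part~(i) is the \emph{local} one $\min\{A_{\min,\ell},n_{\min,\ell}\}$ obtained from the $\Omega_\ell$-analogue of Lemma \ref{lem:coer}, rather than the global one quoted verbatim in \eqref{eq:FEest1}.
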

%%%%%%%%%%%%%%%%%%%%%%%%%%%
\section{The main theoretical results on the convergence of GMRES}
  
\label{sec:main_results}

\subsection{Bounds on the norm and field of values of $\MB_\eps^{-1}\MA_\eps$}  
  
The main purpose of this section is to obtain both (i) an upper bound on the norm of the preconditioned matrix $\MB_\eps^{-1}\MA_\eps$ and (ii) a lower bound on the distance of its field of values from the origin.
 By  Lemma    \ref{thm:matrixprec},  this is equivalent to proving analogous properties
 of the projection operator $Q_\eps^h$.  Our first result,   Theorem \ref{thm:main1},  sets out a criterion on the local projection operators $Q_{\eps,\ell}^h$ that ensures good bounds on the norm and field of values of $Q_\eps^h$. 
We then investigate conditions under which this  criterion  is satisfied;   % and give conditions for this in Corollary \ref{cor:matrices};
%we highlight that
these conditions are explicit in the polynomial degree $p$ of the finite elements and in the coefficients $A$ and $n$.

\begin{theorem}\label{thm:main1}
Assume $kh\leq 1$. Suppose
\beq\label{eq:sigma}
\N{Q_{\abs,\ell}^h v_h - \Pi^h_\ell  v_h}_{1,k,\Omega_\ell} \leq \sigma \N{v_h}_{1,k,\Omega_\ell}, \quad
\text{for all }   v_h\in \cV^h \text{ and for all }\ell = 1, \ldots, N. 
\eeq
Then,
\beq\label{eq:upperbound}
\max_{v_h \in \cV^h} \frac
 {\N{Q_{\abs}^h v_h}_{1,k}}{{\N{v_h}_{1,k}}} \ 
\leq \
\Lambda \CPi(p)( \CPi(p) + \sigma),
\eeq
and
\beq\label{eq:lowerbound}
\min_{v_h \in \cV^h}\, \frac{
\big|(v_h,Q_\abs^h v_h)_{1,k}\big|
}{
\N{v_h}^2_{1,k}
}
\ \geq\ 
\left(
\frac{1}{\Lambda} -  \sqrt{2} \sigma \Lambda\right)  \ - \  R  \,   
\eeq
where 
  \begin{align}
R \ =\ 
\frac{\Lambda \Cchi}{k \delta} \left(1 + (1+\sqrt{2}) \sigma + \CPi(p) +  \frac{\Cchi}{k \delta}\right)
\, + \,
2 \Lambda \frac{ kh  \Cint(p)}{k\delta} \left(\sqrt{2} + \sigma  + \CPi(p) +  
    \frac{\sqrt{2} \Cchi}{k \delta} \right),
    \label{eq:R} \end{align}
and $\CPi(p)$ is given in \eqref{eq:CPidef}. 
\end{theorem}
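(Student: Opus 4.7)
The upper bound \eqref{eq:upperbound} is the easier of the two and follows a chain of standard estimates. The plan is to start from $Q^h_\eps v_h = \sum_\ell \Pi^h_\ell Q^h_{\eps,\ell} v_h$, apply Lemma \ref{lem:norm_of_sum} to pull the $(1,k)$ norm inside the sum with a factor $\Lambda$, then use the boundedness of $\Pi^h_\ell$ from Corollary \ref{cor:Piell} to replace $\|\Pi^h_\ell Q^h_{\eps,\ell} v_h\|_{1,k,\Omega_\ell}$ by $C_\Pi(p)\|Q^h_{\eps,\ell} v_h\|_{1,k,\Omega_\ell}$. A triangle inequality splits $Q^h_{\eps,\ell} v_h = (Q^h_{\eps,\ell} v_h - \Pi^h_\ell v_h) + \Pi^h_\ell v_h$, at which point the hypothesis \eqref{eq:sigma} contributes $\sigma$ and Corollary \ref{cor:Piell} contributes another $C_\Pi(p)$, giving $(\sigma + C_\Pi(p))\|v_h\|_{1,k,\Omega_\ell}$. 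Finally, the finite-overlap property \eqref{eq:finoverEuan1} turns $\sum_\ell \|v_h\|^2_{1,k,\Omega_\ell}$ back into $\Lambda\|v_h\|^2_{1,k}$, yielding \eqref{eq:upperbound}.

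For the lower bound \eqref{eq:lowerbound}, the key idea is to find an algebraic identity that isolates $\sum_\ell \|\chi_\ell v_h\|^2_{1,k,\Omega_\ell}$ as the principal term; \eqref{eq:sum-local-norms} then supplies the $1/\Lambda$ via the partition-of-unity superposition estimate. Concretely, writing $w_\ell := Q^h_{\eps,\ell} v_h$ and using $\Pi^h(\chi_\ell w_\ell)=\chi_\ell w_\ell-(I-\Pi^h)(\chi_\ell w_\ell)$, the plan is to decompose
\begin{equation*}
(v_h, Q^h_\eps v_h)_{1,k} \;=\; \sum_\ell \|\chi_\ell v_h\|^2_{1,k,\Omega_\ell} \;+\; E_1 + E_2 + E_3 + E_4,
\end{equation*}
where $E_1=-\sum_\ell (\chi_\ell v_h,(I-\Pi^h)(\chi_\ell v_h))_{1,k,\Omega_\ell}$, $E_2=\sum_\ell (\chi_\ell v_h, w_\ell-\Pi^h_\ell v_h)_{1,k,\Omega_\ell}$, $E_4=-\sum_\ell (v_h,(I-\Pi^h)(\chi_\ell w_\ell))_{1,k,\Omega_\ell}$, and $E_3=\sum_\ell \tilde b_\ell(v_h,w_\ell)$ is the commutator arising from moving $\chi_\ell$ from one side of the $(1,k)$ inner product to the other (i.e.\ the analogue of $b_\ell$ in \eqref{eq:defb} with $A=I$). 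The derivation uses $(v,\chi_\ell w)_{1,k,\Omega_\ell}=(\chi_\ell v,w)_{1,k,\Omega_\ell}+\tilde b_\ell(v,w)$, the splitting $w_\ell = (w_\ell-\Pi^h_\ell v_h)+\Pi^h(\chi_\ell v_h)$, and the identity $(\chi_\ell v_h,\Pi^h(\chi_\ell v_h))_{1,k,\Omega_\ell}=\|\chi_\ell v_h\|^2_{1,k,\Omega_\ell}-(\chi_\ell v_h,(I-\Pi^h)(\chi_\ell v_h))_{1,k,\Omega_\ell}$.

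The remainder of the proof is a careful bookkeeping exercise. Taking absolute values and using the reverse triangle inequality gives $|(v_h,Q^h_\eps v_h)_{1,k}|\ge \sum_\ell \|\chi_\ell v_h\|^2_{1,k,\Omega_\ell}-|E_1|-|E_2|-|E_3|-|E_4|$; the main term is bounded below by \eqref{eq:sum-local-norms}, producing $1/\Lambda$ along with a $C_\chi/(k\delta)$ contribution that is absorbed into $R$. For the errors, $|E_1|$ and $|E_4|$ are bounded by Cauchy--Schwarz together with \eqref{eq:Cintell2} and \eqref{eq:pou-bound} (plus \eqref{eq:sigma} and Corollary \ref{cor:Piell} to control $\|w_\ell\|_{1,k,\Omega_\ell}\le(\sigma+C_\Pi(p))\|v_h\|_{1,k,\Omega_\ell}$); $|E_2|$ is bounded by \eqref{eq:pou-bound} and the hypothesis \eqref{eq:sigma}, yielding the leading $-\sqrt{2}\sigma\Lambda$ term plus a $\sigma\,C_\chi/(k\delta)$ contribution to $R$; and $|E_3|$ is bounded by the $A=I$ analogue of Lemma \ref{lem:Fbound}(i), giving a $C_\chi/(k\delta)$ factor. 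Summing each term via finite overlap \eqref{eq:finoverEuan1} and writing $h/\delta = (kh)/(k\delta)$ collects precisely the terms listed in \eqref{eq:R}.

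The main obstacle I anticipate is not any individual estimate but the algebraic bookkeeping: getting the decomposition into $E_1,\dots,E_4$ right so that the coefficients in $R$ match exactly, and in particular correctly combining the $\sigma$ contributions from $E_2$ and $E_3$ to produce the $(1+\sqrt{2})\sigma$ prefactor on $C_\chi/(k\delta)$ and the $(\sigma+C_\Pi(p))$ prefactor on $kh\,C_{\rm int}(p)/(k\delta)$. The presence of the commutator $\tilde b_\ell$ (which has no analogue in the self-adjoint Schwarz theory) is what forces the $C_\chi/(k\delta)$ terms in $R$ and is the one genuinely Helmholtz-specific ingredient in the argument.
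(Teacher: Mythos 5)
Your proposal is correct and follows essentially the same route as the paper's proof: the same algebraic identity underlies both (your $E_1,\dots,E_4$ are precisely the five error terms the paper collects into $T1-\|\chi_\ell v_h\|^2$, $T2$, $T3$ in \eqref{eq:rearranged}, only regrouped), and you invoke the same ingredients — Lemma \ref{lem:norm_of_sum}, Corollary \ref{cor:Piell}, \eqref{eq:pou-bound}, \eqref{eq:sum-local-norms}, \eqref{eq:Cintell2}, the $A=I$ commutator, and finite overlap \eqref{eq:finoverEuan1} — to bound each piece. The paper derives the commutator estimate inline inside $T3$ rather than quoting an $A=I$ analogue of Lemma \ref{lem:Fbound}(i), but the bound is identical.
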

\bpf
Throughout the proof, we use the notation
\beqs%\label{eq:Esplit}
z_l \, :=\,  Q_{\abs,\ell}^h v_h - \Pi^h_\ell v_h, \quad \text{so that, by \eqref{eq:sigma},} \quad
\Vert z_\ell \Vert_{1,k,\Omega_\ell} \leq \sigma \Vert v_h \Vert_{1,k, \Omega_\ell} .
\eeqs 
To obtain the upper bound \eqref{eq:upperbound}, we  use  the triangle inequality, then
\eqref{eq:CPi} and \eqref{eq:sigma},   to obtain 
\beq\label{eq:QE1}
\Vert Q_{\eps,\ell}^h v_h \Vert_{1,k,\Omega_\ell} \ \leq \ \Vert \Pi^h_\ell v_h \Vert_{1,k,\Omega_\ell} \ + \ \Vert z_l  \Vert_{1,k,\Omega_\ell}  \leq  \left(\CPi + \sigma   \right) \Vert  v_h \Vert_{1,k,\Omega_\ell} . 
\eeq
Then, using \eqref{eq:global}, Lemma \ref{lem:norm_of_sum},   \eqref{eq:CPi} and \eqref{eq:QE1}, 
\begin{align*}
 \Vert Q_{\eps}^h v_h \Vert_{1,k}^2  &  \ = \
\left\Vert \sum_\ell \Pi^h\left(\chi_\ell Q_{\eps, \ell}^h v_h\right) \right \Vert_{1,k}^2  \ \leq \  \Lambda \sum_\ell \left\Vert  \Pi^h\left(\chi_\ell Q_{\eps, \ell}^h v_h\right) \right \Vert_{1,k, \Omega_\ell}^2 \\
                                     & \leq \Lambda (\CPi)^2  \sum_\ell \left\Vert   Q_{\eps, \ell}^h v_h  \right \Vert_{1,k, \Omega_\ell}^2  
                                        \leq  \Lambda (\CPi)^2 (\CPi + \sigma)^2  \sum_\ell \left\Vert   v_h  \right \Vert_{1,k, \Omega_\ell}^2 ,
\end{align*} 
and \eqref{eq:upperbound}  then follows on using \eqref{eq:finoverEuan1}. 
 
To obtain the lower bound \eqref{eq:lowerbound}, we first split the left-hand side into several terms and then estimate it term by term:
\begin{align}\nonumber
(v_h, Q_{\eps}^hv_h)_{1,k}& = \sum_{\ell} \left(v_h, \Pi^h_\ell Q_{\eps,\ell}^h v_h \right)_{1,k,\Omega_\ell}\nonumber \\ 
&= \sum_{\ell}\Big[ \left(v_h, (\Pi^h_\ell -\chi_\ell)  Q_{\eps,\ell}^h v_h) \right)_{1,k,\Omega_\ell} 
+  \left(v_h,  \chi_\ell Q_{\eps,\ell}^h v_h \right)_{1,k,\Omega_\ell}- \left(\chi_\ell  v_h,  Q_{\eps,\ell}^h v_h \right)_{1,k,\Omega_\ell}   \nonumber \\
&~~~~~+ \left(\chi_\ell  v_h,  (Q_{\eps,\ell}^h   - \Pi^h_\ell) v_h\right)_{1,k,\Omega_\ell} 
                                                                                                                                                   + \left(\chi_\ell  v_h,  (  \Pi^h_\ell -\chi_\ell) v_h\right)_{1,k,\Omega_\ell} + \|\chi_\ell v_h\|_{1,k,\Omega_\ell}^2
                                                                              \Big]\label{eq:expansion} \\
                          &= \sum_{\ell} \left[\|\chi_\ell v_h\|_{1,k,\Omega_\ell}^2 +
                            \left(\chi_\ell  v_h,  (Q_{\eps,\ell}^h   - \Pi^h_\ell) v_h\right)_{1,k,\Omega_\ell}\right]
  \nonumber \\    & + \sum_\ell \left[\left(v_h,  (  \Pi^h_\ell -\chi_\ell) Q_{\eps,\ell}^h v_h\right)_{1,k,\Omega_\ell} + \left(  \chi_\ell v_h,  (\Pi_\ell^h - \chi_\ell ) v_h\right)_{1,k,\Omega_\ell} \right] \nonumber \\
                          & + \sum_\ell \left[ (v_h, \chi_\ell Q_{\eps,\ell}^h v_h)_{1,k,\Omega_\ell} -
                            (\chi_\ell v_h, Q_{\eps,\ell}^h v_h)_{1,k,\Omega_\ell} \right] \ = : \ T1 + T2 + T3 . \label{eq:rearranged}
\end{align}
(Note that \eqref{eq:rearranged} is just a simple rearrangement of \eqref{eq:expansion}.)

Consider first $T1$. For the second term in its summand, we have, using \eqref{eq:pou-bound} and \eqref{eq:sigma}, 
\begin{align*}
  \left| \left(\chi_\ell  v_h,  (Q_{\eps,\ell}^h   - \Pi^h_\ell) v_h\right)_{1,k,\Omega_\ell}  \right| &
       \leq  \sqrt{2}  \left(1 + \frac{\Cchi}{k\delta}\right)  \sigma \|v_h\|_{1,k,\Omega_\ell}^2.%\label{eq:SG4}
\end{align*}
Combining this with \eqref{eq:sum-local-norms} and \eqref{eq:finoverEuan1}, we obtain
\begin{align} \label{eq:T1} T1 \ \geq \ \left(\frac{1}{\Lambda} - \sqrt{2} \sigma \Lambda \right) \Vert v_h \Vert_{1,k}^2
- \Lambda \frac{\Cchi}{k \delta} \left( 1 + \sqrt{2} \sigma  + \frac{\Cchi}{k\delta}\right) \Vert v_h \Vert_{1,k}^2 . \end{align} 

Then  for the summand in $T2$,  using  Lemma \ref{lem:motivated} and \eqref{eq:QE1},  we have the estimates
\begin{align}
\left| \left(v_h, (\Pi^h_\ell -\chi_\ell)  Q_{\eps,\ell}^h v_h) \right)_{1,k,\Omega_\ell}\right| \ & \leq  \ 2\Cint \frac{h}{\delta}    (\CPi+ \sigma)  \|v_h\|_{1,k,\Omega_\ell}^2.
\nonumber \\
\left| \left(\chi_\ell v_h, (\Pi^h_\ell -\chi_\ell) v_h) \right)_{1,k,\Omega_\ell}\right| \ & \leq  \ 2\Cint \frac{h}{\delta}  \sqrt{2} \left( 1 + \frac{\Cchi}{k \delta}\right)   \|v_h\|_{1,k,\Omega_\ell}^2.
\nonumber 
\end{align}
Combining these with \eqref{eq:finoverEuan1}, we obtain
\begin{align} \label{eq:T2}
T2 \ \leq \ 2 \Lambda \Cint \frac{h}{\delta} \left( \CPi+ \sigma + \sqrt{2}
  \left(1 + \frac{\Cchi}{k\delta}\right)\right)\Vert v_h \Vert_{1,k}^2 . \end{align}

For the summand in $T3$, we have by \eqref{eq:Cchidef} and \eqref{eq:QE1},
\begin{align}\nonumber
 &  \left| \left(v_h,  \chi_\ell Q_{\eps,\ell}^h v_h \right)_{1,k,\Omega_\ell}- \left(\chi_\ell  v_h,  Q_{\eps,\ell}^h v_h \right)_{1,k,\Omega_\ell}  \right|
   = \left|  \int_{\Omega_\ell}~ \nabla \chi_\ell \cdot\left(\overline{Q_{\eps,\ell}^h v_h} \nabla v_h - v_h
   \nabla(\overline{Q_{\eps,\ell}^h v_h}) \right) \right|\\ \nonumber
 & \mbox{\hspace{1in} } \le \frac{\Cchi}{\delta} \int_{\Omega_\ell}~ \left|\overline{Q_{\eps,\ell}^h v_h} \nabla v_h - v_h\nabla(\overline{Q_{\eps,\ell}^h v_h}) \right|\ \le\  \frac{\Cchi}{k\delta}   \| Q_{\eps,\ell}^h v_h \|_{1,k,\Omega_\ell} \|v_h\|_{1,k,\Omega_\ell}\\ \nonumber
  &\mbox{\hspace{1in} } \le  \frac{\Cchi}{k\delta} (\CPi+\sigma)  \|v_h\|_{1,k,\Omega_\ell}^2.
\end{align}
Hence
\begin{align} \label{eq:T3}
T3 \ \le \  \Lambda \frac{\Cchi}{k\delta}   (\CPi+\sigma)  \|v_h\|_{1,k}^2
\end{align}
The result is obtained by combining the estimates \eqref{eq:T1},  \eqref{eq:T2}, and \eqref{eq:T3} within the triangle
inequality to obtain \eqref{eq:lowerbound},  with $R$ given by \eqref{eq:R}.  
%   \begin{align*}
% R \ =\ 
% \frac{\Lambda \Cchi}{k \delta} \left(1 + (1+\sqrt{2}) \sigma + \CPi +  \frac{\Cchi}{k \delta}\right)
% \, + \,
% 2 \Lambda \Cint(p) \frac{h}{\delta} \left(\CPi + \sigma  + \sqrt{2}
% \left( 1+ \frac{\Cchi}{k \delta} \right) \right). 
%   \end{align*}
%   and then a short manipulation yields \eqref{eq:R}. 
%\ednote{Could kill this if short of space}
\epf

% \esdel{This paper is principally  concerned  with the effect  of the  
% heterogeneity of the coefficients of {eq:HetHelm} and the polynomial degree $p$ of the finite elements on  preconditioner performance.}\ednote{Euan says: I suggest deleting this, since a) we've said it several times before, and b) I don't think it helps the reader motivate/understand Assumption \ref{ass:2}.}  \esdel{To proceed}
We now introduce a stronger condition than Assumption \ref{ass:2}.
%\esdel{  In the remainder of this section we make the additional assumptions }
\begin{assumption}\label{ass:3}  We assume that $h = h(k)$ and $\delta = \delta(k) $ satisfy: 
  \begin{align*}  (i) \ kh \rightarrow 0 \quad \text{and} \quad (ii) \  k \delta \rightarrow \infty . %\label{eq:extra_ass}
  \end{align*}
\end{assumption}
The  assumption \emph{(i)} is a  very natural strengthening of Assumption \ref{ass:2}:
% \esdel{In order}
to avoid the pollution effect for the fine-grid problem we require that $k(kh)^{2p}$ is small enough. 
% \esdel{to be sure the preconditioner is well-defined  we  need solvability of
% subdomain problems. To ensure this for any $\eps$,  we require  (at least) that  $k(kh)^{2p}\es{H}$}
 (see Remarks \ref{rem:meshthresh1} and \ref{rem:meshthresh2}).   
Thus it is natural  to assume that $kh \rightarrow 0$ as $k \rightarrow \infty$.
The assumption     \emph{(ii)} essentially says that the overlap of subdomains should contain a (perhaps slowly) growing
number of wavelengths. % \ednote{Euan swapped the order of these next two sentences.}
Whilst minimal overlap has also been
used % \esdel{in}
successfully on physically-relevant benchmark problems
% \esdel{practical computations}
% \ednote{Euan says: I have rephrased this, since it's a bit ambigiuous what ``practical computations" means, and someone could argue that the computations in \cite{BoDoGrSpTo:17c} do not fall in this category}
(see, e.g., \cite{BoDoGrSpTo:17c}),  
%\esdel{While we would prefer not to make this restriction,}
there is numerical evidence that a bigger overlap is
necessary in order to obtain
% \esdel{a really robust method}
iteration counts that are independent of $k$. 

\begin{corollary}\label{cor:ver16}
  Let Assumptions \ref{ass:2} and \ref{ass:3} hold, then for
  $k$ sufficiently large,
\begin{align*}
  \max_{v_h \in \cV^h} \frac
 {\N{Q_{\abs}^h v_h}_{1,k}}{{\N{v_h}_{1,k}}} \ 
\leq 
 (8 + 2 \sqrt{2} \sigma)    \Lambda ,
%  \label{eq:corupper}
% \end{align}
% \begin{align}
%   \label{eq:corlower}
  \quad \text{and} \quad 
\min_{v_h \in \cV^h}\, \frac{
\big|(v_h,Q_\abs^h v_h)_{1,k}\big|
}{
\N{v_h}^2_{1,k}
}
\ \geq\  
 \left(\frac{1}{2\Lambda} - 2 \sigma \Lambda \right).    
\end{align*} 
  
\end{corollary}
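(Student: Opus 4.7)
The plan is to derive this corollary as a direct asymptotic consequence of Theorem \ref{thm:main1}, using Assumption \ref{ass:3} to drive the $(k\delta)^{-1}$ and $(kh)$ terms in $\CPi(p)$ and in $R$ to zero. The key observation is that the quantities $\Cint(p)$ and $\Cchi$ depend on $p$ (and on the geometry of the partition of unity) but not on $k$, so under Assumption \ref{ass:3} the explicit factors $(k\delta)^{-1}$ and $kh$ control the passage to the limit.

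First I would analyse $\CPi(p)$. From \eqref{eq:CPidef},
\begin{equation*}
\CPi(p) \;=\; \sqrt{2} + \frac{1}{k\delta}\bigl(2kh\,\Cint(p) + \sqrt{2}\,\Cchi\bigr),
\end{equation*}
and under Assumption \ref{ass:3} both $kh \to 0$ and $(k\delta)^{-1}\to 0$, so $\CPi(p) \to \sqrt{2}$ as $k \to \infty$. Hence for $k$ sufficiently large we have $\CPi(p) \le 2\sqrt{2}$, and then the upper bound \eqref{eq:upperbound} of Theorem \ref{thm:main1} gives
\begin{equation*}
\max_{v_h \in \cV^h}\frac{\N{Q_\eps^h v_h}_{1,k}}{\N{v_h}_{1,k}}
\;\le\; \Lambda\,\CPi(p)\bigl(\CPi(p)+\sigma\bigr)
\;\le\; \Lambda\cdot 2\sqrt{2}\cdot\bigl(2\sqrt{2}+\sigma\bigr)
\;=\; (8 + 2\sqrt{2}\,\sigma)\,\Lambda,
\end{equation*}
which is the first claim.

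For the lower bound, I would show that $R \to 0$ as $k\to\infty$. Inspecting the expression \eqref{eq:R}, each of the two summands contains a prefactor of $(k\delta)^{-1}$ or of $kh/(k\delta) = h/\delta = (kh)(k\delta)^{-1}$, both of which tend to zero under Assumption \ref{ass:3}. The parenthesised factors are uniformly bounded (since $\CPi(p)$ and $\Cchi/(k\delta)$ are bounded as just noted). Therefore, for $k$ large enough, $R \le 1/(2\Lambda)$, and then \eqref{eq:lowerbound} gives
\begin{equation*}
\min_{v_h\in\cV^h}\frac{\bigl|(v_h,Q_\eps^h v_h)_{1,k}\bigr|}{\N{v_h}_{1,k}^2}
\;\ge\; \frac{1}{\Lambda} - \sqrt{2}\,\sigma\Lambda - R
\;\ge\; \frac{1}{2\Lambda} - \sqrt{2}\,\sigma\Lambda
\;\ge\; \frac{1}{2\Lambda} - 2\sigma\Lambda,
\end{equation*}
as required. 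There is no real obstacle here; the result is essentially a book-keeping exercise that records the leading-order behaviour of the bounds in Theorem \ref{thm:main1} in the regime of Assumption \ref{ass:3}. The only subtlety worth flagging is that the threshold for ``$k$ sufficiently large'' depends on $p$ through $\Cint(p)$ (and on the partition of unity through $\Cchi$), an observation that will be important when tracking $p$-dependence in \S\ref{sec:main_results}.
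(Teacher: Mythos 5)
Your strategy is the right one—pass $\CPi(p)$ and $R$ through their explicit dependence on $(k\delta)^{-1}$ and $kh$ and invoke Assumption \ref{ass:3}—and the treatment of the upper bound is fine. But there is a genuine gap in your handling of $R$. You assert that ``the parenthesised factors are uniformly bounded,'' and hence that $R\to 0$. However, both parenthesised factors in \eqref{eq:R} contain $\sigma$, and $\sigma$ is not assumed to be bounded anywhere in Theorem \ref{thm:main1} or in this corollary; it is simply the constant in the hypothesis \eqref{eq:sigma}, which in the applications depends on $k$ (see Lemma \ref{lem:sigmabound}). Consequently your claim $R\le 1/(2\Lambda)$ only holds if the threshold for ``$k$ sufficiently large'' is allowed to depend on $\sigma$, which would render the corollary circular when $\sigma=\sigma(k)$ is substituted later.

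The correct book-keeping, which the paper in fact carries out, is to split $R$ into its $\sigma$-free part and the part linear in $\sigma$. The coefficient of $\sigma$ in $R$ is $\Lambda\Cchi(1+\sqrt{2})/(k\delta) + 2\Lambda\, kh\,\Cint(p)/(k\delta)$, which tends to zero under Assumption \ref{ass:3} \emph{uniformly in $\sigma$}; so for $k$ large (with a threshold depending only on $p$, $\Cchi$, $\Lambda$) this coefficient is at most $(2-\sqrt{2})\Lambda$, and the $\sigma$-free part of $R$ is at most $1/(2\Lambda)$. This gives $|R|\le (2-\sqrt{2})\Lambda\sigma + 1/(2\Lambda)$, whence
\begin{equation*}
 \frac{1}{\Lambda} - \sqrt{2}\,\sigma\Lambda - R \;\ge\; \frac{1}{\Lambda} - \sqrt{2}\,\sigma\Lambda - (2-\sqrt{2})\Lambda\sigma - \frac{1}{2\Lambda} \;=\; \frac{1}{2\Lambda} - 2\sigma\Lambda ,
\end{equation*}
exactly as claimed, and now with a $\sigma$-independent threshold on $k$. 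Your final displayed inequality chain happens to reach the same right-hand side, but via a premise ($R\le 1/(2\Lambda)$ with a $\sigma$-free threshold) that is not justified; you need to retain the $\sigma$-linear tail of $R$ and absorb it into the $2\sigma\Lambda$ term.
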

\begin{proof}    Using Assumption \ref{ass:3}, 
  % \ednote{Euan changed from Assumption \ref{ass:2} to \ref{ass:3}. It seems to me that we don't actually need Assumption \ref{ass:2} to hold, since (i) Theorem \ref{thm:main1} doesn't need it, (ii) the arguments below don't need a lower bound on $k$ or an upper bound on $|\eps|$, the assumption $hk\leq 1$ is subsumed by $hk\rightarrow 0$.}
   together  with \eqref{eq:CPidef} and  \eqref{eq:R},    we have, for $k$ suffciently large, that $\CPi(p) \leq 2 \sqrt{2}$ and
  $\vert R \vert \leq (2 - \sqrt{2}) \Lambda \sigma + 1/(2 \Lambda)$.
 Combining these inequalities with \eqref{eq:upperbound}, \eqref{eq:lowerbound} and \eqref{eq:R}, we obtain the result.  
\end{proof}

Motivated by this corollary, we now obtain an upper 
% Our  next step (Lemma \ref{lem:sigmabound}) will be to
bound on $\sigma$ and then
investigate under what conditions this can be made  small. This
allows us to 
  % show that 
  % $R$  (given in \eqref{eq:R})  decays to zero as $k$ increases and then (via  \eqref{eq:lowerbound}) 
  obtain 
     a positive lower bound  on the distance of the the field of values from the origin.
% \begin{definition} \label{def:contrast}
%  % \esdel{ For}
%  Given coefficients $A, n$ % \esdel{ we introduce}
%  the \em{local contrast} $\cClocal(A,n)$ is defined by
%  %\ednote{\igg{Should we change this here to add the bound for $\eps = 0$ to  fit with the intro?}}
%$$  \cClocal(A,n) : = \ \max_\ell \frac{A_{\max,\ell}} { \min\big\{ A_{\min,\ell}, n_{\min,\ell}\big\}}. $$
%\end{definition}
% \esdel{We give the bound on $\sigma$ for $\vert \eps \vert > 0$ (for that is the case where a fully rigorous proof is possible).}
We focus on the case $\vert\eps\vert>0$, with a discussion on the case $\vert \eps \vert \geq  0$ given in the  remark following the next lemma. % since this is the case for which this theory gives a rigorous lower bound on the field of values (see the discussion in \S\ref{sec:1.2})

% We make a remark about the case $\eps = 0$ below.}
% We discuss the behaviour of $\sigma$ when $\eps=0$ in  \S\ref{sec:sigma}  below.\ednote{\igg{I'm not sure that section 5.2 gives the estimate we need to go into the introduction}} 

\begin{lemma}[Bound on $\sigma$ for $|\eps|>0$]\label{lem:sigmabound}  If % \esdel{Under}
  Assumptions \ref{ass:2} and \ref{ass:3} hold, % \esdel{and assuming}
  $\vert \eps \vert > 0$, % \esdel{for all sufficiently  large  $k$,}
  and $k$ is sufficiently large, then 
  \eqref{eq:sigma} holds
  % \esdel{and}
  with $\sigma$ satisfying the bound    
\begin{align} \label{eq:contrast} \sigma
\leq 
5 \Cchi \frac{k}{|\eps|\delta}\,  \cClocal(A,n) , \quad \text{where} \quad \cClocal(A,n) : = \ \max_\ell \frac{A_{\max,\ell}} { \min\big\{ A_{\min,\ell}, n_{\min,\ell}\big\}}. 
\end{align} 
\end{lemma}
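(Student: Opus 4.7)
The plan is to invoke estimate (i) of Corollary \ref{cor:local-approx} (the bound \eqref{eq:local-approx1} on $\|Q_{\eps,\ell}^h v_h - \Pi_\ell^h v_h\|_{1,k,\Omega_\ell}$), and then show that under Assumption \ref{ass:3} the second of the two terms on its right-hand side is of lower order in $k$ than the first, so that for $k$ large enough the first term dominates and yields the claimed constant $5$.

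More concretely, first I would rewrite the prefactor in \eqref{eq:local-approx1}. The first contribution,
\[
\frac{4k^2}{|\eps|}\bigl(\min\{A_{\min,\ell},n_{\min,\ell}\}\bigr)^{-1}\cdot \frac{A_{\max,\ell}\,C_{\chi,\ell}}{k\delta_\ell},
\]
equals $\frac{4k}{|\eps|\delta_\ell}C_{\chi,\ell}\cdot \frac{A_{\max,\ell}}{\min\{A_{\min,\ell},n_{\min,\ell}\}}$. Using $\delta_\ell\geq\delta$, $C_{\chi,\ell}\leq\Cchi$ by \eqref{eq:Cchidef}, and the definition of $\cClocal(A,n)$, this is bounded by $\frac{4\Cchi k}{|\eps|\delta}\,\cClocal(A,n)$, already matching the desired form with constant $4$.

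Next I would handle the second contribution,
\[
\frac{4k^2}{|\eps|}\bigl(\min\{A_{\min,\ell},n_{\min,\ell}\}\bigr)^{-1}\cdot 4\Ccontell\Cintell\frac{h_\ell}{\delta_\ell}
\;=\;\frac{16\,kh_\ell}{|\eps|\delta_\ell}\cdot\frac{k\,\Ccontell\Cintell}{\min\{A_{\min,\ell},n_{\min,\ell}\}},
\]
and show it is absorbed into the first term for $k$ large. The key observation is that $\Ccontell$ stays bounded as $k\to\infty$: by \eqref{eq:Ccontell} and Assumption \ref{ass:eta} we have $|\eta|/k\leq\sqrt{2}$ (since $|\eps|\leq k^2$), and by Assumption \ref{ass:3}(ii) we have $kH_\ell\geq k\delta\to\infty$, so the parenthetical factor in $\Ccontell$ is eventually $\leq 1$. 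Thus $\Ccontell\lesssim \max\{A_{\max,\ell},\sqrt{2}n_{\max,\ell}\}+\sqrt{n_{\max,\ell}}\Ctr$ uniformly in $k$, and $\Cintell$ depends only on $p$. Meanwhile $kh_\ell\leq kh\to 0$ by Assumption \ref{ass:3}(i), so by taking $k$ large enough the factor $kh_\ell$ can be made arbitrarily small relative to the coefficient data, and in particular
\[
\frac{16\,kh_\ell}{|\eps|\delta_\ell}\cdot\frac{k\,\Ccontell\Cintell}{\min\{A_{\min,\ell},n_{\min,\ell}\}}
\;\leq\; \frac{\Cchi k}{|\eps|\delta}\,\cClocal(A,n),
\]
valid uniformly in $\ell$ since there are only finitely many subdomains. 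Adding this to the first term gives the constant $4+1=5$ in \eqref{eq:contrast}.

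The only delicate point is the last step, i.e.~absorbing the second term into $\frac{\Cchi k}{|\eps|\delta}\cClocal(A,n)$; the potential obstacle is that the factor $k/(|\eps|\delta)$ does \emph{not} tend to zero (indeed it may be large, this being the whole point of keeping it in the estimate), so one cannot simply argue that the second term is $o(1)$. Instead one must track that the ratio of the second term to the first is $O(kh\cdot\Ccontell\Cintell/(A_{\max,\ell}C_{\chi,\ell}))$, which \emph{does} tend to zero under Assumption \ref{ass:3}(i) because $\Ccontell\Cintell$ and the coefficient ratios are $k$-independent. This is where the ``$k$ sufficiently large'' hypothesis is genuinely used.
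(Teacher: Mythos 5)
Your proposal is correct and follows essentially the same route as the paper: invoke \eqref{eq:local-approx1}, extract the factor $k/(|\eps|\delta)$, and use Assumption \ref{ass:3}(i) (i.e.\ $kh\to 0$) to absorb the $kh$-proportional second contribution into the first for $k$ sufficiently large, yielding the constant $5$. Your explicit discussion of the ``delicate point'' (that $k/(|\eps|\delta)$ need not vanish, so one must compare the \emph{ratio} of the two contributions) spells out what the paper compresses into the phrase ``the result follows on using Assumption \ref{ass:3}(i)''; the only minor quibble is that the uniformity in $\ell$ follows from the uniform bounds $\Ccontell\le\Ccont$, $\Cintell\le\Cint$, $A_{\max,\ell}\le A_{\max}$, etc., not from finiteness of the set of subdomains (whose cardinality may grow with $k$).
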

\begin{proof}
  From \eqref{eq:local-approx1}, we have 
\begin{align*} \sigma \ & \leq\ 
4 \frac{k^2}{|\eps|}\Big( \min\big\{ A_{\min, \ell}, n_{\min, \ell}\big\}\Big)^{-1}
\left(\frac{A_{\max,\ell}\, C_{\chi,\ell}}{k\delta_\ell} + 4\Ccontell\Cintell(p)\frac {h_\ell}{ \delta_\ell}\right)\\  
& \leq 
4 \frac{k}{|\eps|\delta}\, \max_\ell
\left(\frac{A_{\max,\ell}C_{\chi,\ell} + 4\Ccont\Cint(p) k h }
{\min\big\{ A_{\min,\ell}, n_{\min,\ell}\big\}} \right), 
\end{align*}
and the result follows on using Assumption \ref{ass:3} (i).
\end{proof}

\begin{remark}\label{rem:contrast}
  \
\noi (i) One can see from the final step of the proof of Lemma \ref{lem:sigmabound}  that the size of  $k$ needed to obtain the estimate \eqref{eq:contrast} depends on both  $p$ and the coefficients $A,n$. \\
    \noi(ii) To get an estimate for $\sigma$ that  is valid for all $\vert \eps \vert \geq 0$, one can repeat the argument in  Lemma \ref{lem:sigmabound},     but using   \eqref{eq:local-approx2} instead of \eqref{eq:local-approx1} and needing also Assumption \ref{ass:nontrapping}.
    The result (for $k$ sufficiently large) is  an  estimate of the form
    $\sigma \leq \cClocal^*(A,n) {H}/{\delta}, $ which is independent of $k$, with $\cClocal^*(A,n)$ a slightly different expression to that in \eqref{eq:contrast},  but still depending only on local variation. 
\ere

%\es{The upper bounds on $\sigma$ from Lemma \ref{lem:sigmabound} (for $|\eps|>0$) or Remark \ref{rem:contrast} (for $|\eps|\geq 0$), together with Assumption \ref{ass:3} and the inequality $\CPi(p) \leq 2 \sqrt{2}$ (for $k$ large enough), show the remainder term $R$ in \eqref{eq:R} can be made arbitrarily small for large enough $k$, we obtain  the following corollary.}

% \igg{
%     \begin{corollary} \label{cor:Rgone}
%       Let Assumptions \ref{ass:2} and \ref{ass:3} hold. Then for $k$ sufficiently large,
% \begin{align*}
%   \max_{v_h \in \cV^h} \frac
%  {\N{Q_{\abs}^h v_h}_{1,k}}{{\N{v_h}_{1,k}}} \ 
% \leq 
%  (8 + 2 \sqrt{2} \sigma)    \Lambda ,
% %  \label{eq:corupper}
% % \end{align}
% % \begin{align}
% %   \label{eq:corlower}
%   \quad \text{and} \quad 
% \min_{v_h \in \cV^h}\, \frac{
% \big|(v_h,Q_\abs^h v_h)_{1,k}\big|
% }{
% \N{v_h}^2_{1,k}
% }
% \ \geq\  
%  \left(\frac{1}{2\Lambda} - 2 \sigma \Lambda \right).    
% \end{align*} 
%           \end{corollary} 
          
% \es{          
%  \bpf
% Assumption \ref{ass:3} and the inequality $\CPi(p) \leq 2 \sqrt{2}$ (for $k$ large enough) show that the remainder term $R$ in \eqref{eq:R} can be made arbitrarily small for large enough $k$; the result follows.
%  \epf         
%  }   

  % enough so 
% 4\Ccont\Cint(p) k h \ll A_{\max,\ell}C_{\chi,\ell}.
% \eeqs
% Since  $\Cint $ depends on $p$ and $\Ccont$ depends on the coefficients
%$A$ and $n$,  the size of $k$ needed to ensure \eqref{eq:contrast}
% \esdel{may be coefficient and $p$ dependent}
Now,  combining Corollary \ref{cor:ver16} with Lemma \ref{lem:sigmabound}, we obtain
a condition on $\eps$ that guarantees a positive  lower bound for the field of values: 
\begin{corollary}  \label{cor:proj}
  Under Assumptions  \ref{ass:2} and \ref{ass:3},  
  suppose  $\eps = \eps(k) $ is chosen so that 
  \begin{align} \label{eq:condn1}
    \frac{k}{\vert \eps \vert \delta} \  \leq\  \frac{1}{40\Cchi \cClocal(A,n) \Lambda^2}  \quad \text{for} \ k \ \text{sufficiently large}.
                                  \end{align}
 Then for $k $ sufficiently large, % \esdel{we have the bounds}
\begin{align*}
  \max_{v_h \in \cV^h} \frac
 {\N{Q_{\abs}^h v_h}_{1,k}}{{\N{v_h}_{1,k}}} \ 
\leq 
 9  \Lambda ,
\quad \text{and} \quad 
\min_{v_h \in \cV^h}\, \frac{
\big|(v_h,Q_\abs^h v_h)_{1,k}\big|
}{
\N{v_h}^2_{1,k}
}
\ \geq\  
\frac{1}{4 \Lambda}.    
 % \label{eq:corlower}
\end{align*} 
\end{corollary}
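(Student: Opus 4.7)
The plan is to combine Lemma~\ref{lem:sigmabound} with Corollary~\ref{cor:ver16} by first bounding $\sigma$ using the hypothesis \eqref{eq:condn1}, and then substituting that bound into the two estimates supplied by Corollary~\ref{cor:ver16}. There is no new analytic content to introduce; everything reduces to a careful but routine chain of inequalities, so I expect no serious obstacle beyond verifying that the constants line up as advertised.

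First I would invoke Lemma~\ref{lem:sigmabound} (whose hypotheses coincide with those of the corollary, together with $|\eps|>0$, which is implicit since \eqref{eq:condn1} must be finite) to obtain
\begin{equation*}
\sigma \ \leq\ 5\,\Cchi\,\frac{k}{|\eps|\delta}\,\cClocal(A,n).
\end{equation*}
Feeding in the hypothesis \eqref{eq:condn1} gives the uniform bound
\begin{equation*}
\sigma\ \leq\ 5\,\Cchi\,\cClocal(A,n)\cdot\frac{1}{40\,\Cchi\,\cClocal(A,n)\,\Lambda^{2}}\ =\ \frac{1}{8\Lambda^{2}}.
\end{equation*}
In particular, since $\Lambda\geq 1$, we have $\sigma\leq 1/8$.

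Next I would plug this into the upper bound of Corollary~\ref{cor:ver16}:
\begin{equation*}
\max_{v_h \in \cV^h}\frac{\|Q^h_\eps v_h\|_{1,k}}{\|v_h\|_{1,k}}\ \leq\ (8+2\sqrt{2}\,\sigma)\Lambda\ \leq\ \Bigl(8+\frac{2\sqrt{2}}{8\Lambda^{2}}\Bigr)\Lambda\ \leq\ 9\Lambda,
\end{equation*}
where the last inequality uses $\Lambda\geq 1$ (so $2\sqrt{2}/(8\Lambda^{2})\leq 1$). For the lower bound I would similarly substitute $\sigma\leq 1/(8\Lambda^{2})$ into the field-of-values estimate of Corollary~\ref{cor:ver16} to get
\begin{equation*}
\min_{v_h\in\cV^h}\frac{|(v_h,Q^h_\eps v_h)_{1,k}|}{\|v_h\|_{1,k}^{2}}\ \geq\ \frac{1}{2\Lambda}-2\sigma\Lambda\ \geq\ \frac{1}{2\Lambda}-\frac{2\Lambda}{8\Lambda^{2}}\ =\ \frac{1}{2\Lambda}-\frac{1}{4\Lambda}\ =\ \frac{1}{4\Lambda}.
\end{equation*}
This is exactly the pair of estimates claimed. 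The only thing to watch is the phrase ``for $k$ sufficiently large'': this condition is inherited from both Corollary~\ref{cor:ver16} (which needed $k$ large enough that $\CPi(p)\leq 2\sqrt{2}$ and $|R|\leq(2-\sqrt{2})\Lambda\sigma+1/(2\Lambda)$) and from Lemma~\ref{lem:sigmabound} (whose threshold on $k$ depends on $p$ and the coefficients through $\Ccont\Cint(p)kh$). Provided we take the maximum of these thresholds, the argument goes through and the proof is complete.
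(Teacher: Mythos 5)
Your proof is correct and follows exactly the route the paper takes (the paper states Corollary~\ref{cor:proj} immediately after remarking that it is obtained by ``combining Corollary~\ref{cor:ver16} with Lemma~\ref{lem:sigmabound}'', without giving further details). Your chain of inequalities fills in exactly the intended arithmetic, including the bound $\sigma \leq 1/(8\Lambda^2)$, the use of $\Lambda > 1$ in the upper bound, and the observation that the ``$k$ sufficiently large'' threshold is the maximum of the thresholds from the two results invoked.
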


\bigskip

% \begin{proof}
%   The condition \eqref{eq:condn1} ensures that $\sigma \leq (2 \sqrt{2} \Lambda^2)^{-1} \leq (2\sqrt{2})^{-1}$
%   % \ednote{Euan removing fractions in the text here.}
%   and by \eqref{eq:CPidef},   $\CPi(p) \leq 2 \sqrt{2}$  for
% $k$ sufficiently large. The upper bound in \eqref{eq:corlower} then follows from  \eqref{eq:upperbound}.
% For the lower bound in \eqref{eq:corlower}, first observe that,
% % \esdel{Moreover}
% by \eqref{eq:lowerbound},  we can make $R$
% % \esdel{as small as we wish}
% arbitrarily small by taking $k$ large enough and so    
% \begin{align*}
% \min_{v_h \in \cV^h}\, \frac{
% \big|(v_h,Q_\abs^h v_h)_{1,k}\big|
% }{
% \N{v_h}^2_{1,k}
% }
% \ \geq\ 
% \frac12 \, \left(
% \frac{1}{\Lambda} -  \sqrt{2} \sigma \Lambda\right)   
% \end{align*}
% \ednote{Euan says: we talked about making this inequality above into a separate corollary, put before this corollary. However, reading this proof, I think it's ok to leave things as they are (but I'm also not opposed to having this additional corollary).\igg{Ivan says lets leave it as it is for now and we can see how it reads on the final reading}}
% for sufficiently large $k$;
% % \esdel{and}
% the result \eqref{eq:corlower} then follows by inserting the upper bound on $\sigma$. 
%   \end{proof} 

  Using Lemma \ref{thm:matrixprec}  we can turn Corollary \ref{cor:proj} into a statement about preconditioned  matrices.

\begin{corollary}  \label{cor:matrices}
% \esdel{  Under}
Let Assumptions  \ref{ass:2} and \ref{ass:3} and condition  
% \esdel{suppose  also that}
\eqref{eq:condn1} hold.   
Then,  for $k $ sufficiently large,
%\esdel{we have the bounds}
\begin{align}\label{eq:finalmatrix}
 \N{\MB_{\eps}^{-1} \MA_{\eps}}_{\MD_k} \ 
\leq  9  \Lambda 
\quad \text{and} \quad 
\min_{\bV \in \mathbb{C}^n}\, \frac{
\big| \left\langle \bV, \MB_\eps^{-1}\MA_\eps\bV \right\rangle_{\MD_k} \big|
}{
\N{\bV}^2_{\MD_k}
}
\ \geq\ 
  \frac{1}{4 \Lambda}. 
\end{align} 
\end{corollary}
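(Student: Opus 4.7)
The statement is essentially a dictionary translation of Corollary \ref{cor:proj} from the language of the projection operator $Q_\eps^h$ acting on $\cV^h$ (equipped with the inner product $(\cdot,\cdot)_{1,k}$) to the language of the preconditioned matrix $\MB_\eps^{-1}\MA_\eps$ acting on $\mathbb{C}^n$ (equipped with the inner product $\langle\cdot,\cdot\rangle_{\MD_k}$). The bridge is Lemma \ref{thm:matrixprec} together with the defining identity $(v_h,v_h)_{1,k}=\langle\bV,\bV\rangle_{\MD_k}=\|\bV\|_{\MD_k}^2$, which says that the coordinate map $v_h\mapsto\bV$ is an isometry from $(\cV^h,\|\cdot\|_{1,k})$ onto $(\mathbb{C}^n,\|\cdot\|_{\MD_k})$.

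First I would handle the field-of-values bound, which is immediate: for any $\bV\in\mathbb{C}^n\setminus\{\boldsymbol 0\}$ with corresponding $v_h\in\cV^h$, Lemma \ref{thm:matrixprec} with $w_h=v_h$ gives
\begin{equation*}
\frac{|\langle\bV,\MB_\eps^{-1}\MA_\eps\bV\rangle_{\MD_k}|}{\|\bV\|_{\MD_k}^2}
\;=\;\frac{|(v_h,Q_\eps^h v_h)_{1,k}|}{\|v_h\|_{1,k}^2},
\end{equation*}
so taking the minimum over $\bV\neq\boldsymbol 0$ corresponds exactly to taking the minimum over $v_h\neq 0$, and the lower bound $1/(4\Lambda)$ of Corollary \ref{cor:proj} transfers directly.

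For the norm bound, I would use the variational characterisation
\begin{equation*}
\|\MB_\eps^{-1}\MA_\eps\|_{\MD_k}
\;=\;\max_{\bV,\bW\neq\boldsymbol 0}\frac{|\langle\bW,\MB_\eps^{-1}\MA_\eps\bV\rangle_{\MD_k}|}{\|\bV\|_{\MD_k}\|\bW\|_{\MD_k}},
\end{equation*}
valid because $\MD_k$ is Hermitian positive definite (so $\langle\cdot,\cdot\rangle_{\MD_k}$ is a genuine inner product on $\mathbb{C}^n$). Invoking Lemma \ref{thm:matrixprec} on the numerator and the isometry on the denominators, the right-hand side equals $\max_{v_h,w_h\neq 0} |(w_h,Q_\eps^h v_h)_{1,k}|/(\|v_h\|_{1,k}\|w_h\|_{1,k})$. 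Cauchy--Schwarz in the $(\cdot,\cdot)_{1,k}$ inner product then bounds this by $\max_{v_h\neq 0}\|Q_\eps^h v_h\|_{1,k}/\|v_h\|_{1,k}$, which by Corollary \ref{cor:proj} is at most $9\Lambda$.

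There is no genuine obstacle here; the only thing to be careful about is making sure the identifications between the operator-level bounds (which Corollary \ref{cor:proj} provides) and the matrix-level bounds (which \eqref{eq:finalmatrix} asserts) are legitimate, and this is secured by the two ingredients above: Lemma \ref{thm:matrixprec} for the sesquilinear form and the isometry $\|v_h\|_{1,k}=\|\bV\|_{\MD_k}$ for the norms.
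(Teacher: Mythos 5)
Your proof is correct and follows exactly the path the paper intends: the paper dispatches Corollary \ref{cor:matrices} in a single line by invoking Lemma \ref{thm:matrixprec} to translate Corollary \ref{cor:proj} from the $(\cV^h,\|\cdot\|_{1,k})$ setting to the $(\mathbb{C}^n,\|\cdot\|_{\MD_k})$ setting, and your write-up simply fills in the routine details (the isometry $\|v_h\|_{1,k}=\|\bV\|_{\MD_k}$ and the variational characterisation of the $\MD_k$-operator norm via Cauchy--Schwarz). No gap; same approach.
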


In the next corollary we deduce % \esdel{results about the robustness of GMRES}
bounds on the number of GMRES iterations. 
% \ednote{Euan says: I've rephrased it this way, since it's more explicit that ``robustness"}.
In the previous papers \cite{GrSpZo:18} and \cite{GrSpVa:17} we  proved analogous results about GMRES in the $D_k$ inner product applied to the
% \esdel{homogeneous problem}
Helmholtz problem with constant coefficients. % \esdel{In}
Part (i) of the following result generalises this to % \esdel{heterogeneous}
the variable-coeffiicent case. Part (ii) % \esdel{also}
uses a novel argument to prove a corresponding result about GMRES in the Euclidean inner product (i.e. ``standard GMRES").

\begin{corollary}[Bounds on the number of GMRES iterations]
\label{cor:GMRES} 
Under the assumptions of Corollary \ref{cor:matrices}:

\noi (i) 
If GMRES is applied to the linear system \eqref{eq:discrete}, with $\MB_\eps^{-1}$ as a left preconditioner in the inner product induced by $\MD_k$, then, for $k$ sufficiently large, the number of iterations needed to achieve a prescribed relative residual
% \esdel{accuracy remains bounded as $k\to \infty$.} 
is independent of $k, \eps, H, \delta, p, A$, and $n$.

\noi (ii) % \esdel{Furthermore, assume}
If the fine mesh sequence $\cT_h$ is quasiuniform and % \esdel{that} 
$k(kh)^{2p}\leq 1$  (see Remark \ref{rem:meshthresh2}), then % \esdel{ if}
GMRES applied in the Euclidean inner product with the same initial residual as in Part (i) % \esdel{it}
takes at most extra $(\log_2(k))/(pC)$ iterations to ensure the same relative % \esdel{accuracy}
residual as if GMRES were applied in the $\MD_k$ weighted inner product, where $C$ is a constant independent of all parameters.
\end{corollary}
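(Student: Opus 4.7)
For Part (i), the plan is to apply the Elman–type convergence estimate for GMRES (in the form of \cite{EiElSc:83,BeGoTy:06}) in the $\MD_k$-weighted inner product. Corollary~\ref{cor:matrices} furnishes exactly the two ingredients this estimate requires: an upper bound $\N{\MB_\eps^{-1}\MA_\eps}_{\MD_k}\le 9\Lambda$ and a lower bound $1/(4\Lambda)$ on the distance of the $\MD_k$-field of values from the origin, both independent of $k,\eps,H,\delta,p,A,n$. The Elman bound then yields a residual reduction factor
$$
q\;:=\;\Bigl(1-\tfrac{(1/4\Lambda)^2}{(9\Lambda)^2}\Bigr)^{1/2}\;<\;1
$$
per iteration which depends only on $\Lambda$. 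The iteration count needed for any prescribed relative residual is therefore $\lceil \log(1/\tau)/\log(1/q)\rceil$, independent of all the listed parameters.

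For Part (ii), the strategy is to transfer the weighted-norm bound from Part (i) to the Euclidean norm at the cost of a factor of $\sqrt{\kappa(\MD_k)}$, where $\kappa(\MD_k):=\lambda_{\max}(\MD_k)/\lambda_{\min}(\MD_k)$. Both Euclidean-GMRES and $\MD_k$-GMRES, started from the same initial residual $r_0$, minimise over the same Krylov subspace, so for any polynomial $\pi_m$ of degree $m$ with $\pi_m(0)=1$,
$$
\|r_m^{\mathrm{eucl}}\|_2\;\le\;\|\pi_m(\MB_\eps^{-1}\MA_\eps)r_0\|_2\;\le\;\lambda_{\min}(\MD_k)^{-1/2}\,\|\pi_m(\MB_\eps^{-1}\MA_\eps)r_0\|_{\MD_k}.
$$
Choosing $\pi_m$ to be the $\MD_k$-optimal GMRES polynomial at step $m$ and then using $\|r_0\|_{\MD_k}\le\lambda_{\max}(\MD_k)^{1/2}\|r_0\|_2$ gives
$$
\|r_m^{\mathrm{eucl}}\|_2\;\le\;\sqrt{\kappa(\MD_k)}\,q^{m}\,\|r_0\|_2,
$$
so standard GMRES reaches the same tolerance as $\MD_k$-GMRES after at most an extra $\log\sqrt{\kappa(\MD_k)}/\log(1/q)$ iterations.

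It therefore remains to bound $\kappa(\MD_k)$ in the form $\kappa(\MD_k)\lesssim k^{1/p}$. Under quasiuniformity of $\cT_h$, the standard scaling/inverse inequalities for $H^1$-conforming nodal elements of degree $p$ yield
$$
\lambda_{\min}(\MM)\;\ge\;c_p\,h^d,\qquad \lambda_{\max}(\MM)\;\le\;C_p\,h^d,\qquad \lambda_{\max}(\MS)\;\le\;C_p\,h^{d-2},
$$
so, using $\MS\succeq 0$ and $kh\le 1$ from Assumption~\ref{ass:2},
$$
\lambda_{\min}(\MD_k)\;\ge\;c_p\,k^2 h^d,\qquad \lambda_{\max}(\MD_k)\;\le\;2C_p\,h^{d-2},
$$
whence $\kappa(\MD_k)\le (2C_p/c_p)\,(k^2 h^2)^{-1}$. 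The mesh condition $k(kh)^{2p}\le 1$ corresponds to choosing $h$ on the pollution threshold, i.e.\ $kh\sim k^{-1/(2p)}$, giving $k^2h^2\gtrsim k^{-1/p}$ and hence $\kappa(\MD_k)\lesssim k^{1/p}$. Substituting into the extra-iteration bound produces $(\log\kappa(\MD_k))/(2\log(1/q))\le (\log_2 k)/(pC)$ with $C$ an absolute constant (since $q$ and the implicit $p$-dependence of $C_p/c_p$ only affect the $\log$-term through an additive constant).

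The main technical obstacle is the estimate $\kappa(\MD_k)\lesssim k^{1/p}$: the lower bound on $\lambda_{\min}(\MM)$ must be carried out in a way that retains only polynomial (not worse) $p$-dependence on quasiuniform meshes, and the mesh-threshold condition must be used to convert $1/(k^2h^2)$ into a power of $k$ whose exponent is $1/p$. Everything else is a routine application of the Elman estimate and the norm-equivalence argument.
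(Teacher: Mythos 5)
Your proposal is correct and follows the paper's own argument in all essentials: Part (i) is an immediate application of the Elman-type GMRES estimate with the bounds from Corollary \ref{cor:matrices}, and Part (ii) combines the GMRES optimality property (to compare Euclidean-GMRES with $\MD_k$-GMRES residuals in the Euclidean norm), a bound on $\kappa(\MD_k)$ of the form $\kappa(\MD_k)\lesssim (kh)^{-2}$, and the mesh threshold $k(kh)^{2p}\leq 1$ to convert this into $k^{1/p}$. The only cosmetic difference is that you derive $\kappa(\MD_k)\lesssim(kh)^{-2}$ from eigenvalue bounds for $\MS$ and $\MM$ under quasiuniformity, whereas the paper phrases the same estimate as a chain of norm equivalences between the nodal vector $\bV$ and the corresponding finite-element function $v_h$ (its displayed inequality $kh^{d/2}\|\bV\|\lesssim\|\bV\|_{\MD_k}\lesssim h^{d/2-1}\|\bV\|$); these are the same calculation, and both you and the paper implicitly read the threshold condition as $kh\sim k^{-1/(2p)}$ rather than literally as a one-sided bound.
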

\begin{proof}
(i) Let $\bR_{\MD_k}^n$  denote the $n$-th residual of the $\MB_{\eps}^{-1}$-preconditioned GMRES applied to $\MA_\eps$ in the $\MD_k$ weighted inner product. Using the Elman estimate \cite{El:82, EiElSc:83} and Corollary \ref{cor:matrices}, we have
\beq\label{eq:123}
\frac{ \N{\bR_{\MD_k}^n}_{\MD_k}}{ \N{\bR_{\MD_k}^0}_{\MD_k} } \le (1-c)^n,
\eeq
where $c<1$ depends only on the constants on the right-hand side of the bounds in \eqref{eq:finalmatrix};
the result then follows from the fact that these constants are independent of all parameters except for $\Lambda$. % \esdel{Thus, the number of iterations remains bounded as $k \to \infty$.}

(ii) % \esdel{If the GMRES is applied in the Euclidean inner product, denote  the $n$-th residual by $\bR_{\ell^2}^n$.}
Let $\bR^n$ denote the $n$-th residual of the $\MB_{\eps}^{-1}$-preconditioned GMRES applied to $\MA_\eps$ in the Euclidean inner product.
%\ednote{Euan rephrased for parallelism with Part (i).}
Because of the optimisation property of GMRES residuals \cite{guttel2014some}, we have
\begin{equation}\label{eq:weightedGmres}
\| \bR^n \| \le \| \bR_{\MD_k}^n \|\quad \tfa n.
\end{equation}
By the inverse estimate for quasiuniform meshes (see, e.g., \cite[ Theorem 4.5.11 and Remark 4.5.20]{BrSc:08}), and the fact that $h^{-1}\gg k$, we have that, for any $\bV \in \mathbb{C}^n$, with $v_h\in \cV^h $ denoting the corresponding finite element function,
\begin{equation}\label{eq:weighted-l2-norms}
  kh^{d/2} \|\bV\| \lesssim k \Vert v_h \Vert_\Omega \lesssim \|\bV\|_{\MD_k} \lesssim  \sqrt{h^{-2} + k^2} \Vert v_h \Vert_\Omega \lesssim 
  h^{-1}\|v_h\|_\Omega  \lesssim h^{d/2-1}\|\bV\|, 
\end{equation}
where $\lesssim$ denotes $\leq$ with  hidden constant independent of all parameters of interest.

Suppose GMRES in the  $\MD_k$ weighted inner product % \esdel{takes $n$ iterations with guaranteed accuracy $(1-c)^n$}
satisfies the bound \eqref{eq:123} for all $n\in \mathbb{N}$. Suppose
GMRES in the Euclidean inner product performs $n+m$ iterations with the same initial residual  $ \bR^0=\bR_{\MD_k}^0 $. Then, 
by \eqref{eq:weightedGmres}, \eqref{eq:weighted-l2-norms}, and the fact that $k(kh)^{2p}\leq 1$, we have,  
\beqs
\frac{ \N{\bR^{n+m}}}{ \N{\bR^0} } \leq  \frac{ \N{\bR_{\MD_k}^{n+m}}}{ \N{\bR_{\MD_k}^0} } \lesssim   (hk)^{-1}\frac{ \N{\bR_{\MD_k}^{n+m}}_{\MD_k}}{ \N{\bR_{\MD_k}^0}_{\MD_k} }\lesssim k^{\frac{1}{2p}} (1-c)^m (1-c)^n.
\eeqs
Therefore, to ensure % \esdel{the same relative accuracy}
that the relative residual is bounded by $(1-c)^n$, we require that $ k^{\frac{1}{2p}} (1-c)^m\le 1$. This is ensured by  choosing $m$ to be the smallest integer larger than $\log_{2}(k) (2p \log_{2}(1/(1-c)))^{-1}$, and the result follows with $C:=
2 \log_{2}(1/(1-c))$.
\end{proof}

\begin{remark}[Right preconditioning]\label{rem:rightpc}
Based on the following equality (see \cite{GrSpVa:17, GrSpVa:17a, GrSpZo:18} for details), the results about right preconditioning (working in the $\MD_k^{-1}$ inner product) can be obtained from analogous results about the left preconditioning of the adjoint problem (working in the $\MD_k$ inner product) 
$$
\frac{ \big| \langle \bV_1, \MA_{\eps} \MB_{\eps}^{-1} \bV_2  \rangle_{\MD_k^{-1}} \big|}{\| \bV_1\|_{\MD^{-1}}\| \bV_2\|_{\MD^{-1}} } =
\frac{ \big| \langle \bW_1,  (\MB_{\eps}^*)^{-1} \MA_{\eps}^*\bW_2  \rangle_{\MD_k} \big|}{\| \bW_1\|_{\MD_k}\| \bW_2\|_{\MD_k} } ,\quad \tfa {\bf 0}\neq \bV_i \in \mathbb{C}^n, \bW_i = \MD_k^{-1}\bV_i, i=1,2.
$$ 
The results in \S \ref{sec:solution_operators}-\ref{sec:dd_results} all hold when the problem in Definition \ref{def:TEDP}  is replaced by its adjoint; therefore the results in this section about left preconditioning
(in the $\MD_k$ inner product) also hold for right preconditioning (in the $\MD_k^{-1}$
 inner product).
\end{remark}

\section{Numerical Experiments}
\label{sec:numerical}

In this section we give numerical experiments to validate our theoretical results and to investigate practical one-level preconditioners for heterogeneous Helmholtz problems.  All computations were done within the Freefem++ software system
\cite{hecht2019freefem++} and were
  performed on a single core (with 8GB memory) on the University of Bath's  Balena HPC system. When using finite elements of degree $p$ we compute stiffness and mass matrices using quadrature rules that are exact for polynomials of degree $2p-2$.
 % \ednote{Discuss with Shihua. Note there is a difference between `degree' and 'order'. We need to get this clear. }

We first state the overall  setting for the experiments, all of which consider the plane wave scattering problem. More precisely, given an incident field $u^i$, we seek the   solution 
$
u = u^i+u^s
$
to the TEDP in Definition \ref{def:TEDP} such that the scattered field $u^s$ satisfies the approximate Sommerfeld radiation condition $\partial_{\bn} u^s - \ri k u^s =0$ on the truncated boundary $\Gamma^I$.  The  truncated domain is  the unit square in 2-d,  $\Omega=(0,1)^2$ and the impenetrable obstacle domain $\Omega_{-} =\emptyset$ (i.e., the problem we consider is the Interior Impedance Problem).
Setting $u^i(\bx) =e^{\ri k \hat{\bd}\cdot \bx}$,  a plane wave with $\hat{\bd}=(1/\sqrt{2},1/\sqrt{2})^T$,  the TEDP satisfied by $u$ has the data: $f=0$ and $g =\ri k(\hat{\bd}\cdot\hat{\bn} -1)e^{\ri k \hat{\bd}\cdot \bx},$ where $\hat{\bn}$ is the outward normal.  Other parameters in the TEDP, such as $A, n, \eps$ and $k$, are stated in each experiment. Note that when $A = I$ and $n = 1$ (the homogeneous interior impedance problem), the exact solution of the TEDP is
  $ u(\bx) = u^i(\bx) = e^{\ri k \hat{\bd}\cdot \bx}$. 

  To discretise the TEDP, we first choose a uniform coarse mesh $\cT^H$ of equal square elements of side length $H = 1/M$ on $\Omega$ (with $M$ being a positive integer), and then uniformly refine the coarse mesh to obtain a fine mesh $\cT^h$. According to Remarks \ref{rem:pollution} and \ref{rem:meshthresh2}, we choose the fine mesh size $h\sim k^{-1-\frac{1}{2p}}$.  When absorption is present,   we always choose $0 \leq \eps \leq k^2$. Assumption \ref{ass:2} is therefore always satisfied (in fact $hk \rightarrow 0$ as $k \rightarrow \infty$).

The definition of  a specific preconditioner requires  an overlapping domain decomposition $\{\Omega_\ell\}_{\ell = 1}^N$ and a corresponding partition of unity $\{\chi_\ell\}_{\ell = 1}^N$.  Based on the coarse mesh introduced above, we consider two options for the partition:
\begin{itemize}
\item \emph{Partition strategy 1:} Let $\{\phi^H_\ell \}_{\ell = 1}^N$ be the basis functions of the bilinear finite element space on $\cT^H$. Choose the overlapping subdomains as $\Omega_\ell = {\rm supp}(\phi^H_\ell)$. Then
  $\{\chi_\ell := \phi^H_\ell,  \ \ell = 1, \ldots, N$\}  forms a natural  partition of unity for $\{\Omega_\ell \}_{\ell = 1}^N$ with generous overlap, and was used  in \cite{GrSpZo:18}. 

\item \emph{Partition strategy 2:} Take the coarse elements $\{\tilde{\Omega}_\ell\}_{\ell = 1}^N$ of $\cT^H$ as a non-overlapping domain decomposition. Extend each domain $\tilde{\Omega}_\ell$ to get $\Omega_\ell$ by adding one or more layers of adjacent  fine grid
  elements, so that the boundary of each extended domain has  distance no more than $\delta$ from  $\partial  \tilde{\Omega}_\ell$. Let $\pi_\ell$ be a non-negative function in $\Omega_\ell$ with $\pi_\ell(\bx) =1$ for $\bx\in \tilde{\Omega}_\ell $ and decreasing linearly to $0$ on the boundary of $\tilde{\Omega}_\ell$. Then set $\chi_\ell(\bx) =\pi_\ell(\bx)/(\sum_i \pi_i(\bx))$  (see for example \cite[Lemma 3.4]{ToWi:05}).
\end{itemize}
In all experiments below we solve the preconditioned system  using  standard GMRES (in the  Euclidean inner product) with relative residual stopping tolerance  $10^{-6}$ and 
  with  initial guess chosen to be a random complex vector with   entries  uniformly distributed on the complex  unit circle. 

In Experiment \ref{exp:p}  we show that, in a wide range of cases, the performance of the preconditioner appears to be independent of the polynomial degree of the elements. In some cases this is predicted by the theory of \S \ref{sec:main_results}, but the property persists even outside the reach of the theory. We also illustrate the benefits of higher order elements in terms of efficiency.  In Experiment \ref{exp:subdomains} we compare the performance of Partitioning Strategies 1 and 2
(including different overlap choices in Strategy 2).
% Experiments \ref{exp:p} and  \ref{exp:subdomains} together
%   suggest that the best method among those
    %     tested is obtained with
In the rest of the experiments we use
finite element order $p=3$, and  Partition Strategy 2 with overlap $\delta = H/4$.
% We use this method for the rest of the tests.
  % \ednote{Euan says: rephrased this next sentence from passive to active.}
  In Experiment \ref{exp:fov} we compute and plot the boundary of the field of values of the preconditioned operator for different choices of  absorption $\eps$.
  These show that the analysis leading to Corollaries \ref{cor:matrices} and \ref{cor:GMRES}
  is sharp in a way
  made precise below.  In Experiments  \ref{exp:hete}-\ref{exp:localhete}, we investigate how the heterogeneity affects  the performance of the preconditioner; these experiments support the theory in terms of its dependence on $\eps$ and on the local variation of coefficients, but also illustrate interesting and complex behaviour outside the range of the theory. 

 In all tables of iteration numbers below we give iteration counts  for the SORAS preconditioner \eqref{eq:ASpc}, which is the  preconditioner analysed above. In  brackets in some of the tables we also give iteration numbers  for the ORAS preconditioner \eqref{eq:ASpc1}, since this is popular in practice.  
   % which is often better than that of SORAS.
   However there is no theory  for ORAS applied to Helmholtz problems.
 
\begin{experiment}[Effect of polynomial degree]\label{exp:p}
\ \\
  \noindent
  Linear system setting: $$A=I, n=1,\quad \text{with} \quad  p\in \{1,2,3,4\} $$
Preconditioner setting: $$\text{Partition strategy 1 with  } H=k^{-0.3}$$
\end{experiment}

Tables \ref{tb:polyEps}-\ref{tb:poly} give the numbers of GMRES iterations for solving the homogeneous Helmholtz problem with absorption $\eps =k^{1.5}$ and without absorption $\eps = 0$, respectively.  The preconditioner is the SORAS method \eqref{eq:ASpc} with  Partition Strategy 1, $\delta = H$ and so Assumptions \ref{ass:2}, \ref{ass:3}  are  satisfied. Also in the case when $\eps = k^{1.5}$ we have $k/\eps \delta  = k^{-0.1} \rightarrow 0$. Thus by Corollary \ref{cor:GMRES}, we expect the performance of the preconditioner to be independent of $k$ and $p$ % \esdel{, for $p$ in a given range,}
% \ednote{Euan says: it's independent of $p$. The ``range" issue is that the range of $k$ for which this result holds depends on $p$.}
as $k \rightarrow \infty$. This is what we observe in Table   \ref{tb:polyEps}. We see the same $p-$independence in Table \ref{tb:poly}, with near $k-$independence as well.   % and this property persists for the pure Helmholtz case $\eps = 0$
The last number in Column $2$ of  Table \ref{tb:polyEps}-\ref{tb:poly} is missing, since the system with $p=1$ was too large
for  the single core used for the experiments.  % \ednote{Euan says: rephrased, since the machine we used (i.e.~Balena) has more than one core}.
Higher $p$ yields smaller systems since the restriction on the mesh diameter  $h \sim k^{-1-1/2p}$
becomes less stringent.

Another advantage of using higher order methods is the improved  accuracy of the numerical solution. Table \ref{tb:errors} gives  the relative errors of the FEM solution in $L^2$ and $H^1$.  In each  column the error remains close to constant
but the error  is reduced by at least one order of magnitude each time the   degree is increased by $1$
(recall that the exact solution $u$ is known analytically in this case so errors can be computed).  For the rest of our tests we fix $p=3$.

\begin{table}[H]
\setlength\extrarowheight{2pt} % for a bit of visual "breathing space"
\centering
\begin{tabularx}{0.8\textwidth}{C|CCCC}
\hline
\hline
$k\backslash p$	&$1$	&$2$	&$3$	&$4$	\\
\hline
40	&12 	&12 	&12 	&12 	\\
80	&12 	&12 	&12 	&12 	\\
120	&12 	&12 	&12 	&12 		\\
160	&--~	&12 	&12 	&12 	\\
\hline
	\hline
\end{tabularx}
\caption{Experiment \ref{exp:p}: \#GMRES iterations versus  degree $p$, $\eps =k^{1.5}$, SORAS preconditioner, Partition Strategy 1, $H = k^{-0.3}$. }\label{tb:polyEps}
\end{table}

\begin{table}[H]
\setlength\extrarowheight{2pt} % for a bit of visual "breathing space"
\centering
\begin{tabularx}{0.8\textwidth}{C|CCCC}
\hline
\hline
$k\backslash p$	&$1$	&$2$	&$3$	&$4$	\\
\hline
40	&13 	&14 	&13 	&13 	\\
80	&12 	&13 	&12 	&12 	\\
120	&13 	&14 	&14 	&13 		\\
160	&--~	&17 	&16 	&15 	\\
\hline
	\hline
\end{tabularx}
\caption{Experiment \ref{exp:p}: \#GMRES iterations versus degree $p$, $\eps =0$, SORAS, Partition Strategy 1, $H = k^{-0.3}$. }\label{tb:poly}
\end{table}

\begin{table}[H]
\setlength\extrarowheight{2pt} % for a bit of visual "breathing space"
\centering
\scalebox{0.9}{
\begin{tabularx}{\textwidth}{C|CC|CC|CC|CC}
\hline
\hline
\multirow{2}{*}{$k \backslash p$}& \multicolumn{2}{c}{$1$}& \multicolumn{2}{c}{$2$ }& \multicolumn{2}{c}{$3$}& \multicolumn{2}{c}{$4$} \\ \cline{2-9}
& $e_0$  &  $e_1$& $e_0$  &  $e_1$& $e_0$  &  $e_1$& $e_0$  &  $e_1$\\ 
\hline 
%$20$	&$5.66$e$-2$	&$1.14$e$-1$	&$1.48$e$-3$	&$1.47$e$-2$	&$1.14$e$-4$	&$9.59$e$-4$	&$4.17$e$-6$	&$9.00$e$-5$\\
$40$	&$5.73$e$-2$	&$9.09$e$-2$	&$1.33$e$-3$	&$1.04$e$-2$	&$6.76$e$-5$	&$6.49$e$-4$	&$2.79$e$-6$	&$6.51$e$-5$\\
$60$	&$5.73$e$-2$	&$8.11$e$-2$	&$1.30$e$-3$	&$8.58$e$-3$	&$5.04$e$-5$	&$5.19$e$-4$	&$2.31$e$-6$	&$5.20$e$-5$\\
$80$	&$5.72$e$-2$	&$7.59$e$-2$	&$1.25$e$-3$	&$7.39$e$-3$	&$4.10$e$-5$	&$4.42$e$-4$	&$2.33$e$-6$	&$4.52$e$-5$\\
$100$	&$5.72$e$-2$	&$7.25$e$-2$	&$1.25$e$-3$	&$6.63$e$-3$	&$3.60$e$-5$	&$3.97$e$-4$	&$2.52$e$-6$	&$4.10$e$-5$\\
\hline
	\hline
\end{tabularx}}
\caption{Experiment \ref{exp:p}: Relative errors $e_0 := \frac{\|u-u_h\|_\Omega}{\|u\|_\Omega}$, $e_1 := \frac{\|\nabla(u-u_h)\|_\Omega}{\|\nabla u\|_\Omega}$
%  ,  exact solution $u(\bx)=e^{ik(\hat{\bd}\cdot \bx)}$,  FE solution $u_h$
}\label{tb:errors}
\end{table}

% \igg{\esdel{One observation one  can make about Tables {tb:polyEps} and {tb:poly} is}
\igg{\es{We see from Tables \ref{tb:polyEps} and \ref{tb:poly}}
 that the GMRES iteration counts remain % \esdel{pretty stable}
 \es{fairly constant}
 % \esnote{Changing ``stable'' to ``constant'' here because ``stable'' has several possible meanings.}
 for all $k$ tested.
% \esdel{This can be compared with}
\es{In contrast,} the theoretical results % \esdel{(e.g., Corollary  {cor:proj})}
% \esdel{which}
require that $k$ should be sufficiently large before the  bounds on the norm and the field of values of the preconditioned matrix can be  guaranteed
\es{(see, e.g., Corollary  \ref{cor:proj})}. In fact under the assumptions of Experiment \ref{exp:p}, we have $C_\chi = \sqrt{2}$,  and $\Lambda = 4$,  so  with $\delta = k^{-0.3}$ and $\eps = k^{1.5}$,   condition
  \eqref{eq:condn1} reads $ k^{-0.2} \leq (640 \sqrt{2})^{-1}, $ requiring a  very large $k$ to be satisfied. However these are sufficient and not necessary conditions for good GMRES convergence, and we see this % \esdel{strongly}
  \es{clearly} in \es{the results of} Experiment \ref{exp:p}. To explore this issue  further, in the next experiment we compute numerical bounds on the norm and field of values \igg{of the preconditioned matrix} \es{(}which were theoretically estimated in Corollary \ref{cor:proj}\es{)} and we see that these \es{bounds} vary  very little
    %     \esdel{with  $k$} 
  \es{over the range of $k$ considered}. However we know no way of proving such results for pre-asymptotic $k$. }

\igg{\begin{experiment}[Numerical bounds for the field of values \igg{and norm} ]\label{exp:fov-bounds}
\ \\
 \noindent
  Linear system setting: $$A=I, n=1,\quad \text{with} \quad  p\in \{1,2,3,4\} $$
Preconditioner setting: $$\text{Partition strategy 1 with  } H=k^{-0.3},k^{-0.4}, k^{-0.5}$$
\end{experiment}}
\igg{We begin by recalling that any complex matrix $\MX$ (with the same dimension as the finite element space $\cV^h$) can be written as the sum of its Hermitian and skew-Hermitian parts:  $\MX = \MX_R + \ri \MX_C$ with  $\MX_R$ and $\MX_C$  both Hermitian. Thus, for any vector $\bV$ of nodal values, 
  $$ \langle \bV, \MX\bV\rangle  = \langle \bV,  \MX_R \bV\rangle  + \ri \langle
  \bV,  \MX_C \bV\rangle , $$
  so that, provided $\MX_R$ is positive definite, then the distance of the field of values of $\MX$ from the origin can be estimated below
  by the minimum eigenvalue of $\MX_R$.
We use this \es{fact} to obtain a lower bound on the \igg{distance of the  field of values of the preconditioned matrix}   $\MB_\eps^{-1}\MA_\eps$ \igg{from the origin}, by noting that, for all $\bV \not =\mathbf{0}$,  %\ednote{Ivan has changed the following} 
$$\begin{aligned}
  %\inf_{0\not=\bV \in \mathbb{C}^n}
  \frac{
 \left\langle    \bV, \MB_\eps^{-1}\MA_\eps\bV \right\rangle_{\MD_k} }
 {
\N{\bV}^2_{\MD_k}
} &= \
%\inf_{0\not=\bV \in \mathbb{C}^n}
\frac{
  \left\langle    \bW, \MD_k^{1/2}\MB_\eps^{-1}\MA_\eps \MD_k^{-1/2}\bW \right\rangle}
 {
\N{\bW}^2   
},  \quad \text{where} \quad \bW = \MD_k^{1/2}\bV,
% \\
% &\ge 
%  \frac{
%  \big| \left\langle    \bV, \left( \MD^{1/2}\MB_\eps^{-1}\MA_\eps \MD_k^{-1/2} + \MD^{-1/2}\MA_\eps^* (\MB_\eps^{-1})^* \MD_k^{1/2} \right)\bV \right\rangle \big|}
%  {
% 2\N{\bV}^2  
% }\\
% &\ge \lambda_{\min} \left( \MD\MB_\eps^{-1}\MA_\eps \MD_k^{-1} + \MA_\eps^* (\MB_\eps^{-1})^*  \right).
\end{aligned}
$$
and \es{then} estimating the modulus of the right-hand side from below by the minimum eigenvalue of the Hermitian part of  $\MD_k^{1/2}\MB_\eps^{-1}\MA_\eps \MD_k^{-1/2}$ (when this is positive).

Analogously, the quantity $\N{\MB_{\eps}^{-1} \MA_{\eps}}_{\MD_k}$ is computed as the largest eigenvalue of
% \ednote{I think what was written here before was wrong. Note that for a matrix $\MX$, $$ \Vert \MX \Vert_{\MD_k}^2  = \sup_{\bV \not = \mathbf{0}}\frac{\bV^* \MX^* \MD_k \MX \bV}{\bV^* \MD_k \bV}   = \  \sup_{\bW \not = \mathbf{0}}\frac{\bW^* \MD_k^{-1/2}  \MX^* \MD_k \MX \MD_k^{-1/2} \bW}{\bW^*  \bW}.$$
%   Shihua, Please check that  is in fact what your code computed!
%   }
  $$ \MD_k^{-1/2} ((\MB_\eps^{-1})\MA_\eps)^* \MD_k   (\MB_\eps^{-1}\MA_\eps)
  \MD_k^{-1/2}.$$
We computed  these eigenvalues using  the package SLEPc within the finite element package FreeFEM++.

Tables \ref{tb:fov-b1}-\ref{tb:fov-b3} list the computed bounds for
% \esdel{the cases of}
different subdomain sizes $H=k^{-0.3}, k^{-0.4},$ and $H=k^{-0.5}$.  In each table,  the first number in \es{the} brackets is a lower bound of the distance of the field of values of the preconditoned matrix from  the origin, and the second number in \es{the} brackets is the $\MD_k$ norm of the preconditioned matrix.

We observe that the field of values is bounded  away from the origin for all choices of $H$ and for all values of $k$ tested,
even though these are outside the theoretical range of
Corollary \ref{cor:proj}.
% \esdel{in this case}
Nevertheless we see qualitative agreement with Corollary \ref{cor:proj} in the sense that the distance of the field of values from the origin
% \esdel{is getting}
\es{gets}  smaller as  $\sigma\sim \frac{k}{\epsilon H}$ increases.
% \sg{from Table \ref{tb:fov-b1} to Table \ref{tb:fov-b3}, of which the size of local domains $H = k^{-0.3}, k^{-0.4}, k^{-0.5}$ becomes smaller for each fixed $k$ and fixed $\epsilon$. } 
%\esdel{By}
\es{In} contrast, the norm of the preconditioned matrix changes very little (and is very close to $1$) as $H$ and $k$ vary. \es{Furthermore}, all the bounds appear to be  independent of the polynomial degree $p\in \{1,2,3,4\}$.}

\begin{table}[H]
  \igg{
    \setlength\extrarowheight{2pt} % for a bit of visual "breathing space"
\centering
\begin{tabularx}{\textwidth}{C|CCCC}
\hline
\hline
$k\backslash p$	&$1$	&$2$	&$3$	&$4$	\\
\hline
40	& (0.175, 1.030) 	& (0.176, 1.029) 	& (0.176, 1.028) 	& (0.176, 1.040)	\\
80	& (0.203, 1.020)	& (0.203, 1.020)	& (0.203, 1.020)	& (0.203, 1.021)	\\
120	& 	(0.193, 1.022)	& (0.193, 1.022)	& 	(0.193, 1.021)	&	(0.193, 1.022)	\\
160	&	(--, --) & (0.203, 1.019)		& (0.203, 1.019)		&  (0.203, 1.019)	\\
\hline
	\hline
\end{tabularx}
\caption{\igg{Experiment \ref{exp:fov-bounds}: bounds for the distance of the  field of values from the origin (first  \es{number in brackets}) and the norm (second  \es{number}) of the preconditioned matrix, $\eps =k^{1.5}$, SORAS preconditioner, Partition Strategy 1, $\delta \sim H = k^{-0.3}$.}
}\label{tb:fov-b1}
}
\end{table}

\begin{table}[H]
\igg{\setlength\extrarowheight{2pt} % for a bit of visual "breathing space"
\centering
\begin{tabularx}{\textwidth}{C|CCCC}
\hline
\hline
$k\backslash p$	&$1$	&$2$	&$3$	&$4$	\\
\hline
40	& (0.147, 1.043) 	& (0.148, 1.041) 	& (0.148, 1.041) 	& (0.148, 1.054)	\\
80	& (0.154, 1.037)	& (0.154, 1.037)	& (0.154, 1.036)	& (0.154, 1.038)	\\
120	& 	(0.155, 1.035)	& (0.155, 1.035)	& 	(0.155, 1.035)	&	(0.155, 1.035)	\\
160	&	(--, --) & (0.150, 1.037)		& (0.150, 1.037)		&  (0.150, 1.037)	\\
\hline
	\hline
\end{tabularx}
\caption{\igg{Experiment \ref{exp:fov-bounds}: bounds for the distance of the  field of values from the origin 
(first  \es{number in brackets}) and the norm (second  \es{number})
%(first figure) and the norm (second figure)
 of the preconditioned matrix , $\eps =k^{1.5}$, SORAS preconditioner, Partition Strategy 1, $\delta \sim H = k^{-0.4}$.}
}\label{tb:fov-b2}
}\end{table}

\begin{table}[H]
\igg{\setlength\extrarowheight{2pt} % for a bit of visual "breathing space"
\centering
\begin{tabularx}{\textwidth}{C|CCCC}
\hline
\hline
$k\backslash p$	&$1$	&$2$	&$3$	&$4$	\\
\hline
40	& (0.100, 1.071) 	& (0.101, 1.070) 	& (0.101, 1.069) 	& (0.101, 1.082)	\\
80	& (0.104, 1.060)	& (0.104, 1.060)	& (0.104, 1.059)	& (0.104, 1.061)	\\
120	& 	(0.100, 1.046)	& (0.100, 1.046)	& 	(0.100, 1.046)	&	(0.100, 1.048)	\\
160	&	(--, --) & (0.093, 1.064)		& (0.093, 1.064)		&  (0.093, 1.064)	\\
\hline
	\hline
\end{tabularx}
\caption{\igg{Experiment \ref{exp:fov-bounds}: bounds for the distance of the  field of values from the origin 
(first  \es{number in brackets}) and the norm (second  \es{number})
%(first figure) and the norm (second figure) 
of the preconditioned matrix , $\eps =k^{1.5}$, SORAS preconditioner, Partition Strategy 1, $\delta \sim H = k^{-0.5}$.
    % \igg{Experiment \ref{exp:fov-bounds}: bounds for the field of values, $\eps =k^{1.5}$, SORAS preconditioner, Partition Strategy 1, $H = k^{-0.5}$. }\
    \label{tb:fov-b3}}}
}
\end{table}

\begin{experiment}[Effect of sizes of subdomains and  overlap]
\label{exp:subdomains}

\

\noi Linear system setting: $$A=I, \quad n=1,\quad  p=3$$
Preconditioner setting: $$\text{different partition strategies with different  } H, \delta$$
\end{experiment}
Experiment \ref{exp:p} used   fairly large  subdomains and generous overlap and 
 would represent a relatively  heavy communication load in parallel implementation.
Here we look for more practical alternatives, first comparing Partition Strategies  1 and 2.

In Table \ref{tb:subdomain1} we give results using Partition Strategy 1 with  $H=k^{-\alpha}$, for various $\alpha = 0.4, 0.5, 0.6$.
When  $\eps=k^{1.5}$ the   GMRES convergence is still  independent of $k$,  as $k$ increases,   for all $\alpha$
(and this is guaranteed by Corollaries \ref{cor:matrices} and \ref{cor:GMRES}  when $\alpha = 0.4$. The choice $\alpha = 0.4$ also gives  $k-$independent iterations  when $\eps = 0$, %      However  when $\eps = 0$,
% the  GMRES
but the iterations grow significantly for     $\alpha>0.5$.

In Table \ref{tb:subdomain2}, we compare this with   Partition Strategy 2
with  $H=k^{-0.4}$, and  various choices of  overlap $\delta$.
Note that smaller overlap means a reduced  communication load in parallel implementations.
By comparing the columns   in Table \ref{tb:subdomain1} (for $\alpha=0.4$)  with Table \ref{tb:subdomain2} (for
$\delta=H/4$), 
we see  that reducing the overlap from $H$ to $H/4$ does not degrade the GMRES convergence.
However smaller overlap choices do not give good preconditioners for the case $\eps = 0$.
In Tables  \ref{tb:subdomain1},  \ref{tb:subdomain2} we have also included  results  for the ORAS preconditioner \eqref{eq:ASpc1}. Although this is often better than the SORAS preconditioner, we have no theory for it, and we see that in the next experiment that a theory based on simply estimating the field of values is bound to fail.

\begin{table}[H]
\setlength\extrarowheight{2pt} % for a bit of visual "breathing space"
\centering
\begin{tabularx}{0.85\textwidth}{C|CCC|CCC}
\hline
\hline
& \multicolumn{3}{c}{$\eps =k^{1.5}$}& \multicolumn{3}{c}{$\eps =0$}
 \\ \cline{2-7}
$k$ & $\alpha =0.4$  &  $\alpha =0.5$ &  $\alpha =0.6$& $\alpha =0.4$&  $\alpha =0.5$ &  $\alpha =0.6$\\ 
\hline 
$40$		&12 (6)	&14 (9)	&20 (15)	&16 (10)	&24 (18)	&40 (28)\\
$80$		&12 (7)	&15 (10)	&20 (16)	&20 (14)	&30 (23)	&45 (39)\\
$120$	&12 (6)	&14 (11)	&20 (18)	&22 (16)	&32 (25)	&55 (41)\\
$160$	&12 (6)	&13 (10)	&21 (19)	&21 (15)	&31 (26)	&65 (55)\\
\hline
	\hline
\end{tabularx}
\caption{Experiment \ref{exp:subdomains}:
\#GMRES iterations with the SORAS (ORAS) preconditioner:  subdomain size $H^{-\alpha}$,  Partition strategy 1}\label{tb:subdomain1}
\end{table}

\begin{table}[H]
\setlength\extrarowheight{1pt} % for a bit of visual "breathing space"
\centering \scalebox{0.7}{
\begin{tabularx}{1.25\textwidth}{C|CCCC|CCCCC}
\hline
\hline
& \multicolumn{4}{c}{$\eps =k^{1.5}$}& \multicolumn{5}{c}{$\eps =0$}
 \\ \cline{2-10}
$k$ & $\delta =\frac{H}{4}$  &  $\delta =\frac{1}{k}$	&  $\delta =2h$	&  $\delta =h$& $\delta =\frac{H}{4}$&  $\delta =\frac{2}{k}$	&  $\delta =\frac{1}{k}$	&  $\delta =2h$	 &  $\delta =h$\\ 
\hline 
$40$		&13 (7)	&17 (10)	&17 (10)	&23 (12)	&18 (13)	&19 (13)	&29 (17)	&29 (17)	&40 (19)	\\
$80$		&12 (7)	&16 (10)	&19 (11)	&25 (13)	&20 (15)	&22 (17)	&30 (21)	&40 (23)	 &61 (26)	\\
$120$	&12 (7)	&17 (11)	&20 (12)	&26 (14)	&22 (18)	&27 (23)	&38 (27)	&53 (29)	&77 (31)	\\
$160$	&12 (7)	&18 (12)	&21 (13)	&28 (15)	&24 (19)	&35 (29)	&47 (31)	&64 (34)	&97 (35)	\\
\hline
	\hline
\end{tabularx}}
\caption{Experiment \ref{exp:subdomains}: \#GMRES iterations with the SORAS (ORAS) preconditioner: subdomain size  $H=k^{-0.4}$,  overlap $\delta$,  Partition strategy 2}\label{tb:subdomain2}
\end{table}

In the rest of our  tests, we use the preconditioners based on  Partition Strategy 2 with $H=k^{-0.4}$ and $\delta =H/4,$
since this works well in cases both with and without absorption and presents a reasonable compromise between subdomain size and overlap.
In fact in practice, we set $H=1/M \sim k^{-0.4}$ where  $M$ is an integer. For $k=40,80,120,160$, $M$ is equal to $4,5,6,7$ respectively. 
Thus in the following experiments  we are   using  subdomains that shrink in size  as $k$ increases.

\begin{figure}[t]
    \centering
    \begin{subfigure}[t]{0.5\textwidth}
        \centering
        \includegraphics[width=.9\textwidth]{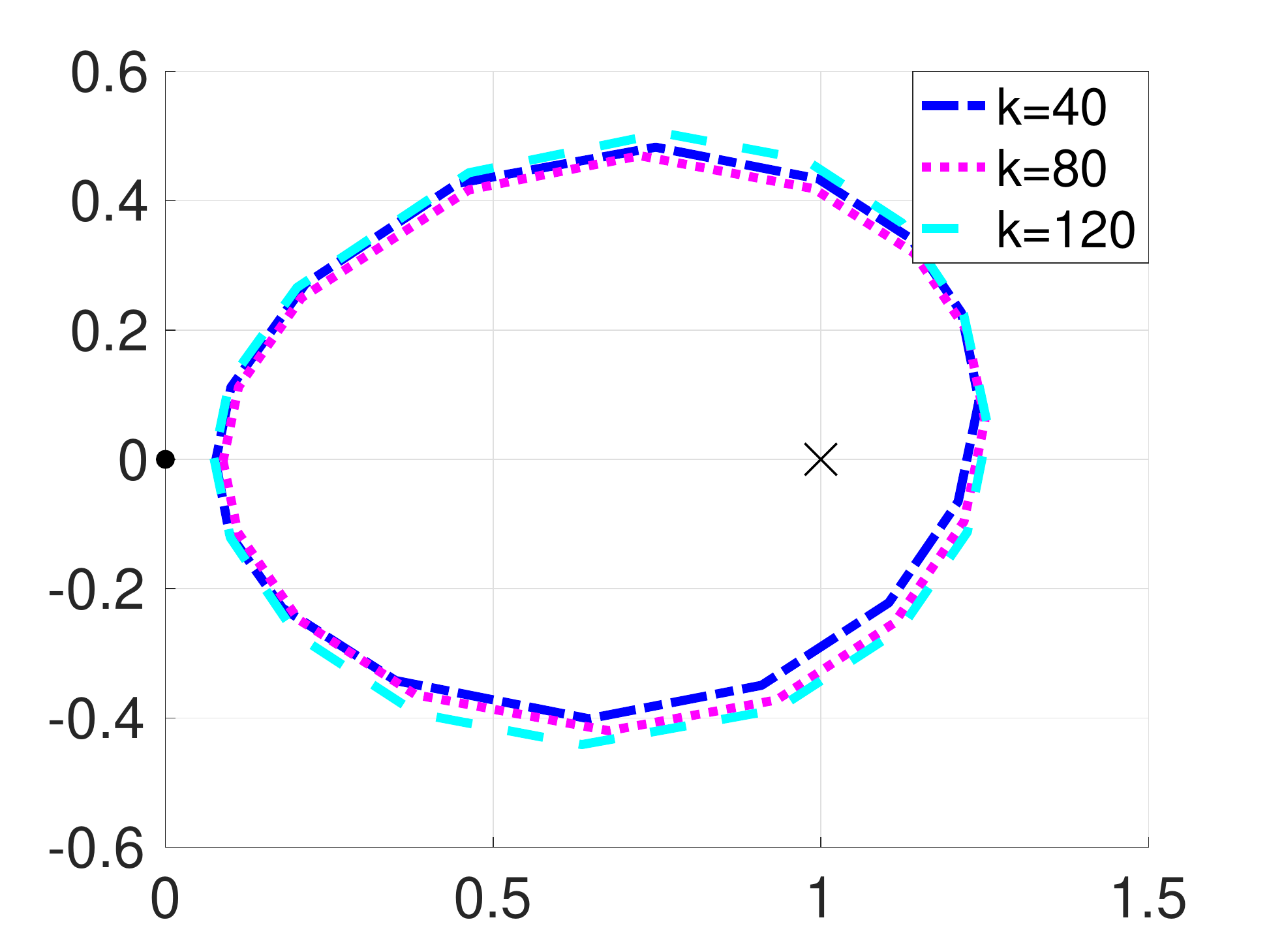}
        \caption{SORAS, $\eps=k^{1.5}$}\label{fig:fov-eps1.5}
         \end{subfigure}%
     \hfill
    \begin{subfigure}[t]{0.5\textwidth}
        \centering
        \includegraphics[width=.9\textwidth]{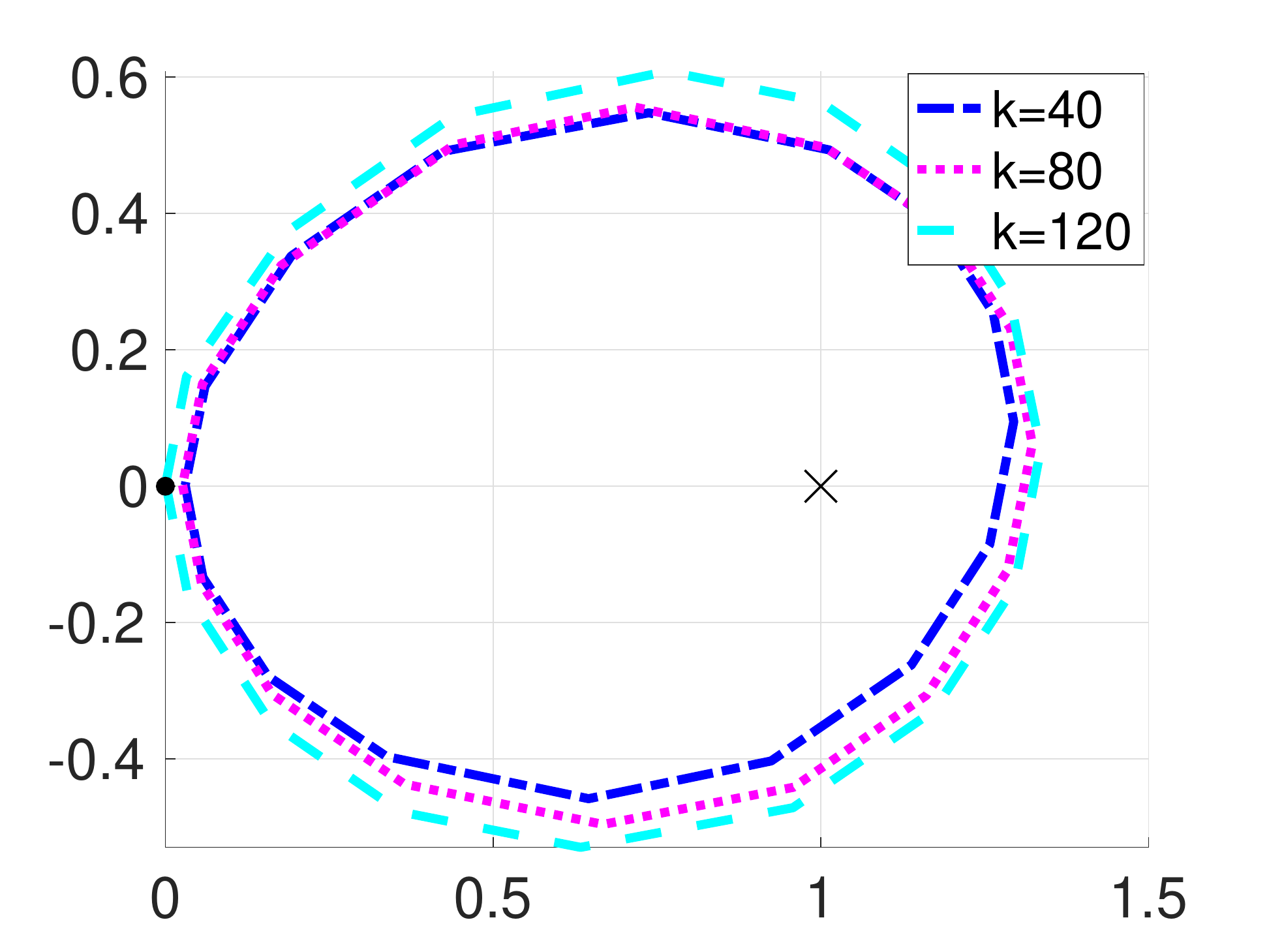}
        \caption{ SORAS, $\eps=k^{1.4}$}\label{fig:fov-eps1.4}
        \end{subfigure}%
        
           \begin{subfigure}[t]{0.5\textwidth}
        \centering
        \includegraphics[width=.9\textwidth]{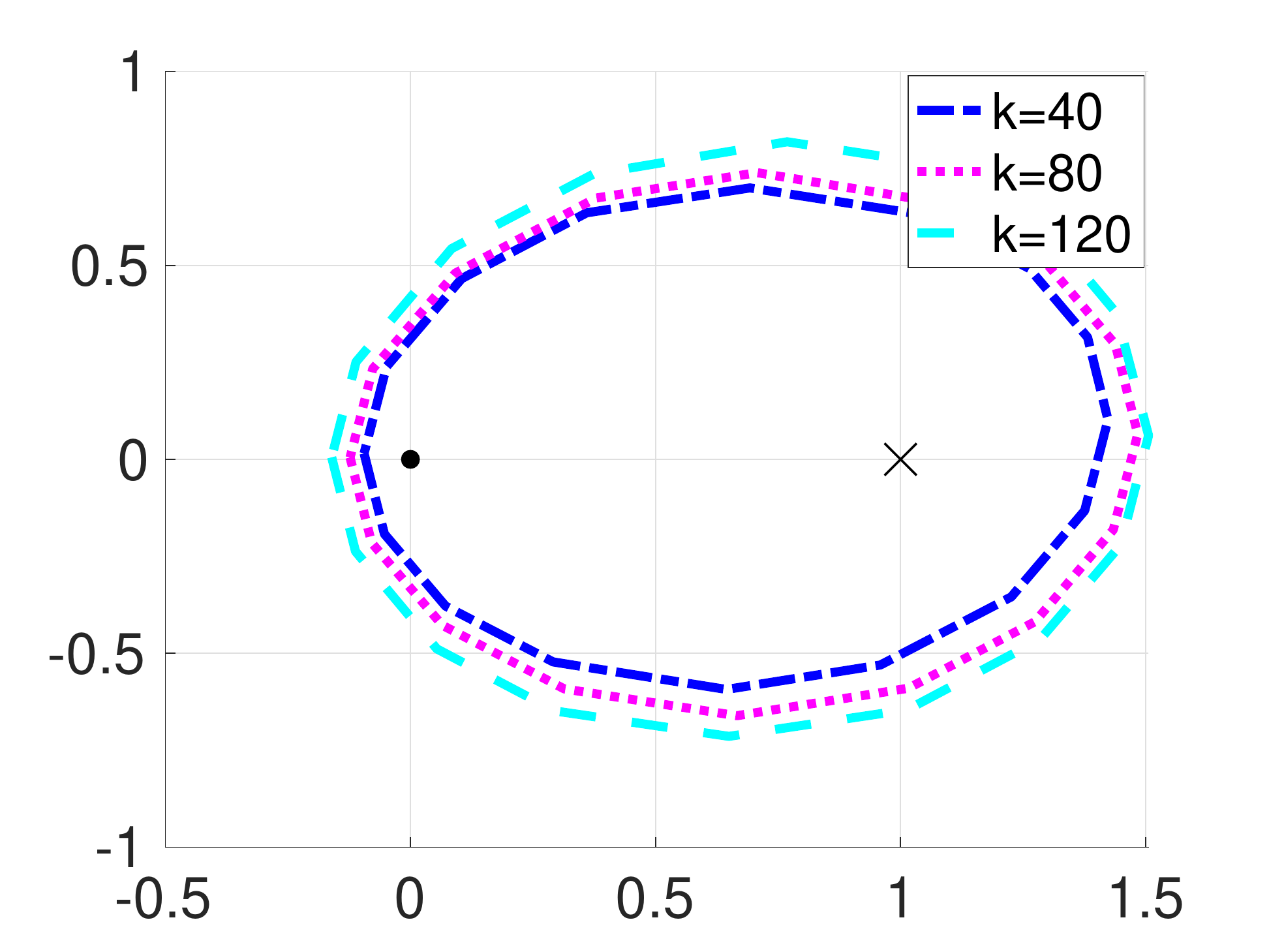}
        \caption{SORAS,  $\eps=k$}\label{fig:fov-eps1}
         \end{subfigure}%
     \hfill
    \begin{subfigure}[t]{0.5\textwidth}
        \centering
        \includegraphics[width=.9\textwidth]{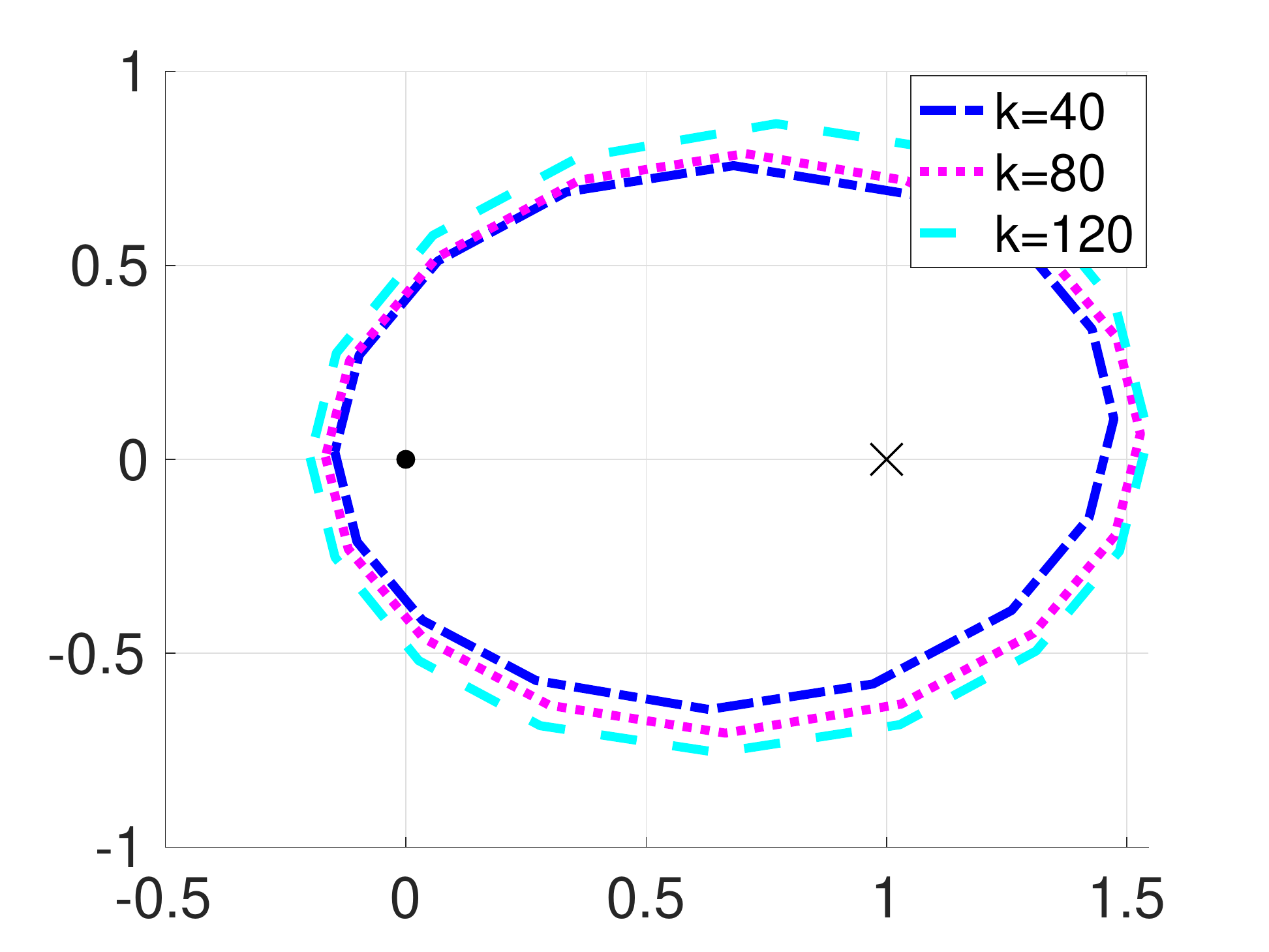}
        \caption{SORAS,  $\eps=0$}\label{fig:fov-eps0}
	\end{subfigure}%
           \caption{Experiment \ref{exp:fov}: field of values  for various  $\eps$ (SORAS). The origin is denoted with a bold dot and the point $1$ with an $\times$. }\label{fig:fov}
\end{figure}

\begin{experiment}[Plots of the field of values]
\label{exp:fov}

\noi Linear system setting: $$A=I,\  n=1, \text{ and }  p=3$$
Preconditioner setting: $$\text{Partition strategy 2 with } H=k^{-0.4}, \delta=\frac{H}{4}$$
\end{experiment}

Corollary \ref{cor:GMRES} used the Elman estimate to
% \esdel{predict the robustness of  GMRES with respect to $k$}
prove $k$-independent bounds on the number of GMRES iterations. This estimate requires
an upper bound on the norm of the preconditioned system and a positive lower bound on the distance of its field
of values 
  % \ednote{Euan says: in this section we used FoV to abbreviate ``field of values". Since we didn't use this abbreviation elsewhere in the paper, I've changed all the ``FoV"s to ``field of values"}
  from the origin and gave us a rigorous  result about the $k$-independence of GMRES iterations for absorptive problems.
  In this experiment we illustrate two things:
  (i)  that our estimates for the field of values are sharp in terms of their dependence on absorption,  and
  (ii) that (given an upper bound on the norm of the preconditioned operator) the positive  separation of the field of values from the origin is sufficient but appears  far from necessary for good convergence of GMRES.   Point (ii) is reinforced again by later experiments.

Throughout we have used  the algorithm of Cowen and Harel \cite{CoHa:95} to 
 plot the boundary of the field of values (in the inner product of  $\langle\cdot, \cdot\rangle_{\MD_k}$) for any given matrix.  Recall that the field of values is a convex subset of $\bbC$.   In all plots of the field of values, the origin in $\bbC$ is denoted with a bold dot, while the point at $1$ is denoted with a cross.  

Figure \ref{fig:fov} shows the boundaries of the field of values for the SORAS preconditioner in Experiment \ref{exp:fov} % (with setting given above)
 with different choices of $\eps$.  Here $\delta = H/4 \sim k^{-0.4}$.
 In Figure \ref{fig:fov-eps1.5} $k/(\vert\eps\vert  \delta) = k^{-0.1} \rightarrow 0$ as $k \rightarrow \infty$
 and the field of values is well away from the origin, thus   confirming  our estimate from Corollary \ref{cor:matrices}.
 Figure \ref{fig:fov-eps1.4} shows the case   $k/(\vert\eps\vert  \delta) = \mathcal{O}(1)$;
here the requirement \eqref{eq:condn1} of Corollary \ref{cor:matrices}   just fails,  
and we see in   Figure \ref{fig:fov-eps1.4}  that the boundary of the  field of values moves towards  the origin as $k$ increases.
 For the
 cases shown in Figures \ref{fig:fov-eps1}-\ref{fig:fov-eps0},  $k/(\vert\eps\vert  \delta)$ blows up as $k$ increases
 and here we see that  the field of values contains the origin. This experiment verifies the sharpness of the field of values  estimates in \S \ref{sec:main_results} in terms of their  dependence on $\eps$. 
 However  the numerical results in Table \ref{tb:subdomain2} % for $\eps = 0$ and $\delta = H/4$
 show that the preconditioners work well even in some cases where the corresponding field of values of the preconditioned problem contains the origin.  For the field of values given in Figure  \ref{fig:fov-eps0},
 the preconditioner arguably still works well with increasing $k$ (6th column of Table \ref{tb:subdomain2}). Indeed we expect that Columns 3-6 of Table \ref{tb:subdomain2} all correspond to fields of values that contain the origin.    
 Thus  GMRES continues to work well even in some cases where our   sufficient  conditions for $k$-independent iterations
 are violated.

 We further emphasise this point by  plotting  in  Figure \ref{fig:fov-oras} the field of values of the ORAS preconditioned matrices for $\eps = k^{1.5}$ and $\eps = 0$. GMRES iteration numbers for these are given (in brackets)  in Columns 2 and 6 of Table \ref{tb:subdomain2}.
 In Column 2 we see convincingly   $k$-independent convergence of GMRES and  in Column 6 we see  very good convergence; however the field of values contains the origin in all cases.

\begin{figure}[t]
    \centering
           \begin{subfigure}[t]{0.5\textwidth}
        \centering
        \includegraphics[width=.9\textwidth]{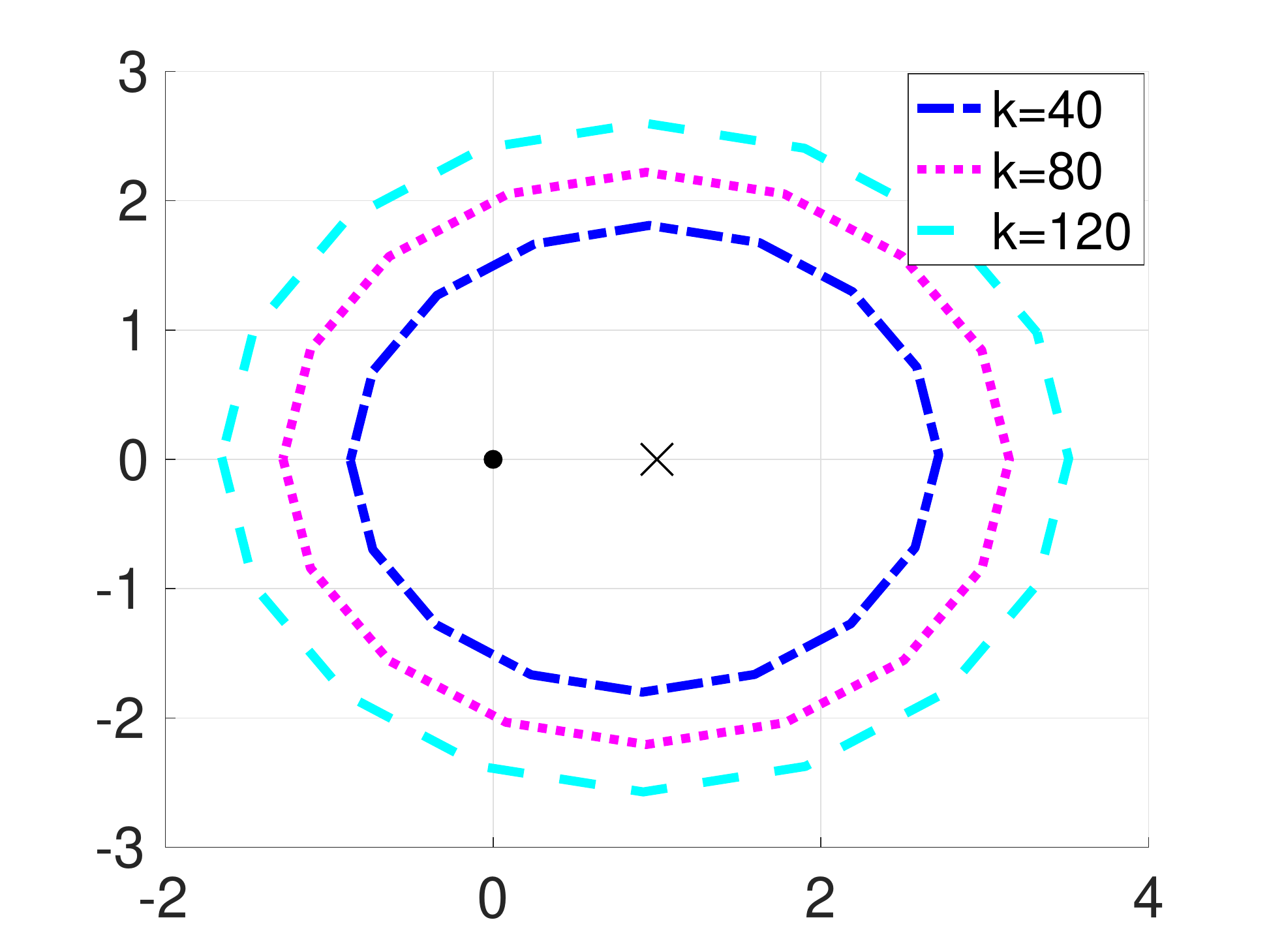}
        \caption{ORAS, $\eps=k^{1.5}$}\label{fig:fov-eps1.5oras}
         \end{subfigure}%
     \hfill
    \begin{subfigure}[t]{0.5\textwidth}
        \centering
        \includegraphics[width=.9\textwidth]{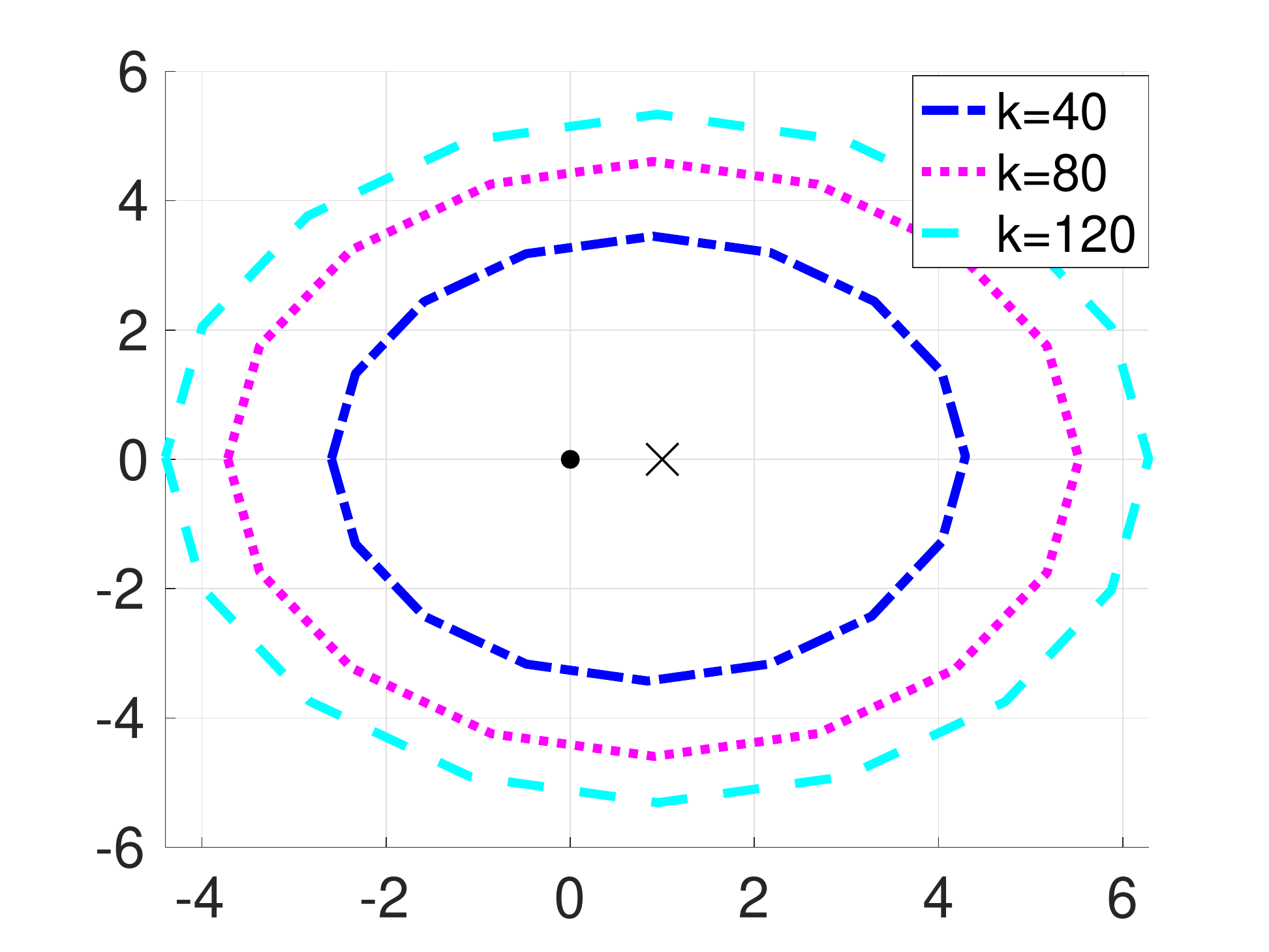}
        \caption{ORAS,  $\eps=0$}\label{fig:fov-eps0oras}
    \end{subfigure}%
   \caption{Experiment \ref{exp:fov}: field of values for various  $\eps$ (ORAS). The origin is denoted with a bold dot and the point $1$ with an $\times$.}\label{fig:fov-oras}
\end{figure}

 \begin{experiment}[Effect of the heterogeneity of the media.]
  \label{exp:hete}

\

\noi Linear system setting: $$\text{variable }A, \,\,n, \text{ and }  p=3$$
Preconditioner setting: $$\text{Partition strategy 2 with } H=k^{-0.4}, \delta=\frac{H}{4}$$
\end{experiment}

% Here we study the effect of variable coefficients.
In the domain $\Omega = (0,1)^2$ we introduce a  penetrable obstacle that is either 
a disk of radius $1/4$ or a square of side length $1/2$, centred at $(1/2,1/2)$.
% \ednote{Euan says: previously we were using both $1/2$ and $\frac12$. I have changed everything to $1/2$.}
  % \esdel{In the obstacle we have variable coefficient (either $A$ or $n$ or both, but $A$ will be confined to a scalar function which we again denote $A$).} 
In this experiment the penetrable obstacle corresponds to  either  $A$ or $n$ being variable.
Although our theory allows $A$ to be a matrix, in these experiments it is scalar.  
Exterior to the penetrable obstacle the coefficients
% \esdel{will be set to}
are $A = 1$  and $n = 1$. The  profiles studied  are shown in Figure \ref{fig:ws}.
In these,  grey denotes a coefficient value equal to  $1$,  while blue denotes a  value $<1$ and red denotes a value $>1$ (with actual values to be given below).
% \ednote{Should we put a pdf on arxiv before we submit and reference it here, so referees can see the colour online if they need to?}
In addition:     
\begin{enumerate}
\item Figures \ref{fig:sctws1} and \ref{fig:sctws4}: coefficient linearly decreases from the center to the boundary of the obstacle;
\item Figures \ref{fig:sctws2} and \ref{fig:sctws5}: coefficient linearly increases from the center to the boundary of the obstacle;
\item Figures \ref{fig:sctws3} and \ref{fig:sctws6}: coefficient oscillates with the maximum and minimum values inside obstacle.
\end{enumerate}
For the oscillating profiles in Figures \ref{fig:sctws3} and \ref{fig:sctws6}, there are 7 layers of uniform thickness as we proceed outward from the center to the boundary of the obstacle with the maximum attained in the red layer and the minimum  in the blue layer. Thus by  stating
the maximum and minimum values of the coefficient,  all profiles are uniquely defined.

\begin{figure}[t]
    \centering
    \begin{subfigure}[t]{0.3\textwidth}
        \centering
        \includegraphics[width=\textwidth]{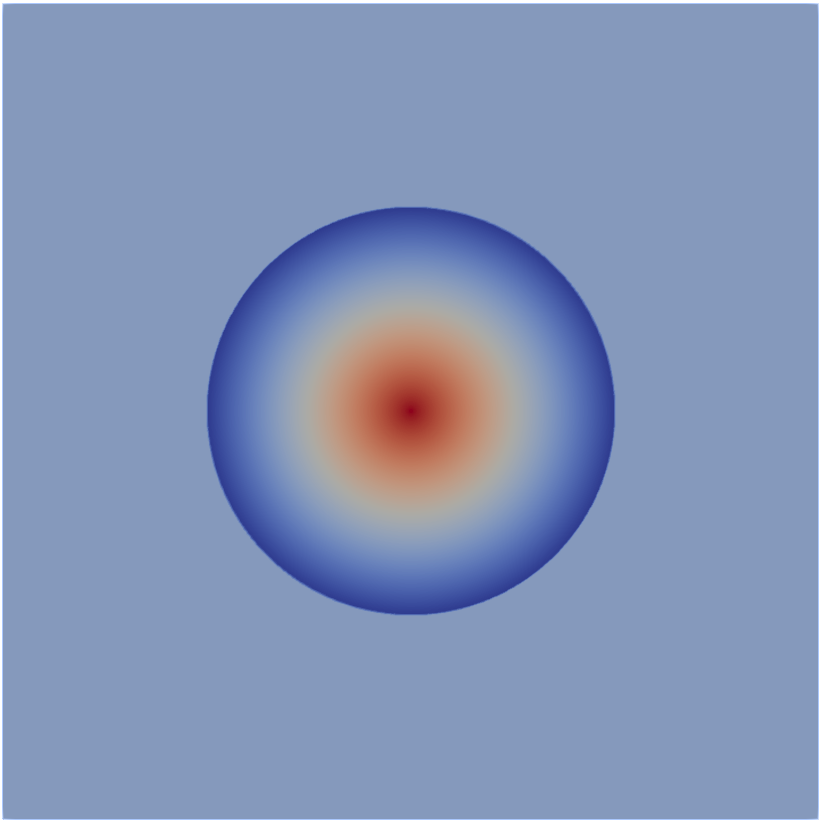}
        \caption{\tiny maxima at center}\label{fig:sctws1}
    \end{subfigure}%
    \hfill
    \begin{subfigure}[t]{0.3\textwidth}
        \centering
        \includegraphics[width=\textwidth]{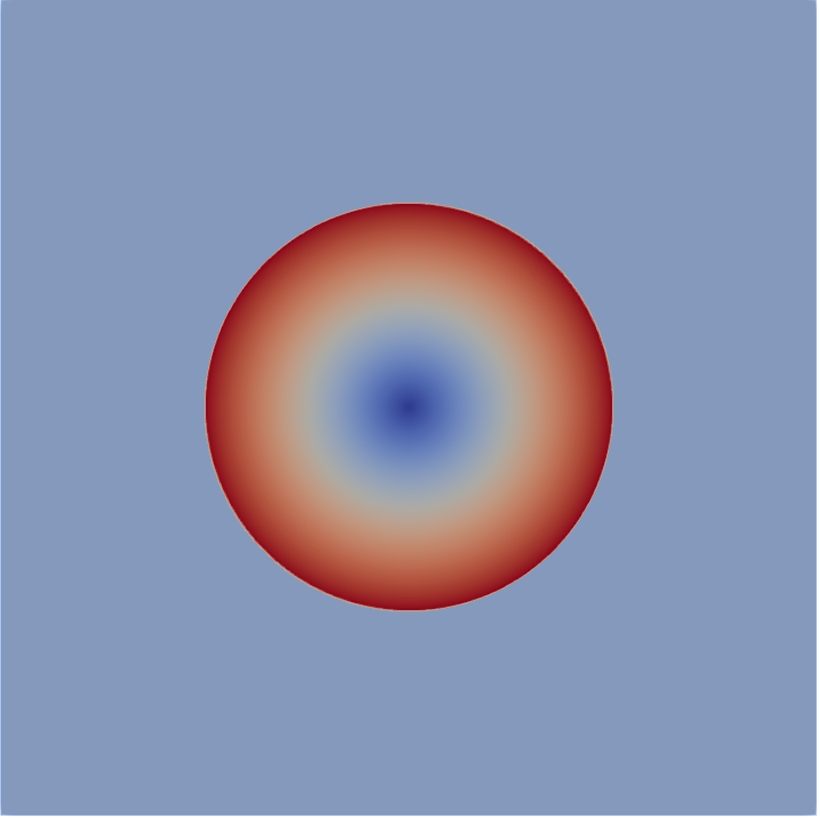}
        \caption{\tiny minima at center}\label{fig:sctws2}
    \end{subfigure}
        \hfill
    \begin{subfigure}[t]{0.3\textwidth}
        \centering
        \includegraphics[width=\textwidth]{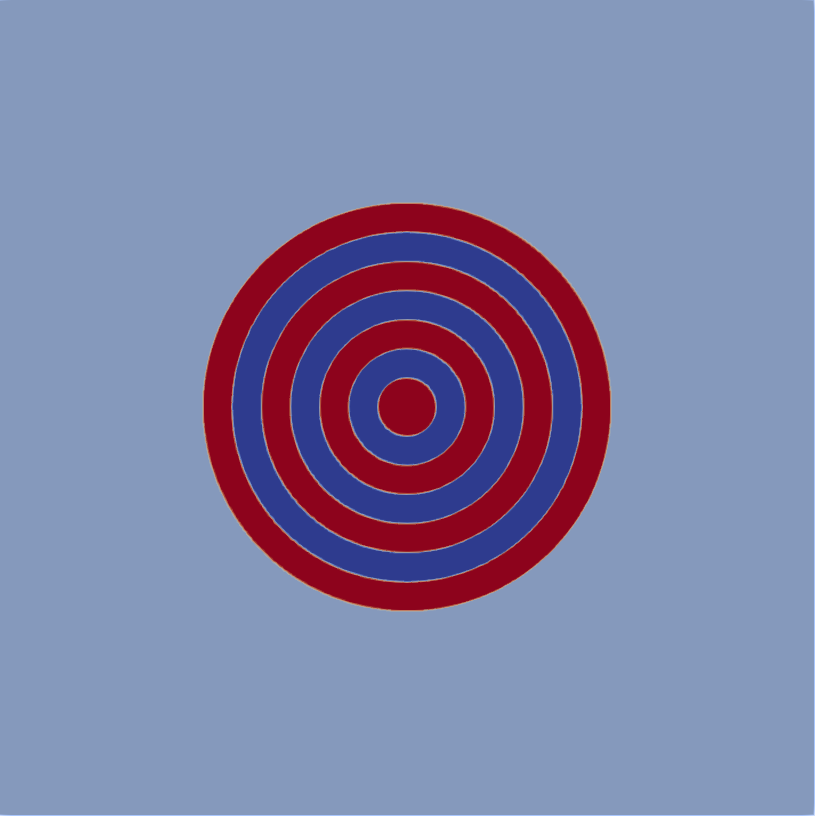}
        \caption{ oscillates}\label{fig:sctws3}
         \end{subfigure}%
     
            \begin{subfigure}[t]{0.3\textwidth}
        \centering
        \includegraphics[width=\textwidth]{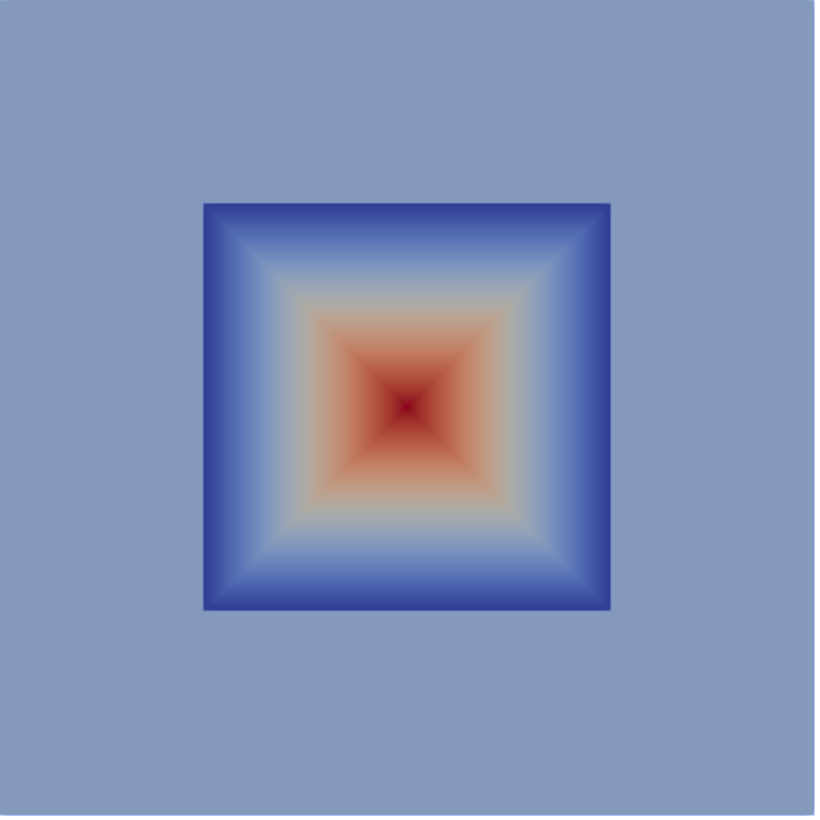}
        \caption{\tiny maxima at center}\label{fig:sctws4}
    \end{subfigure}%
    \hfill
    \begin{subfigure}[t]{0.3\textwidth}
        \centering
        \includegraphics[width=\textwidth]{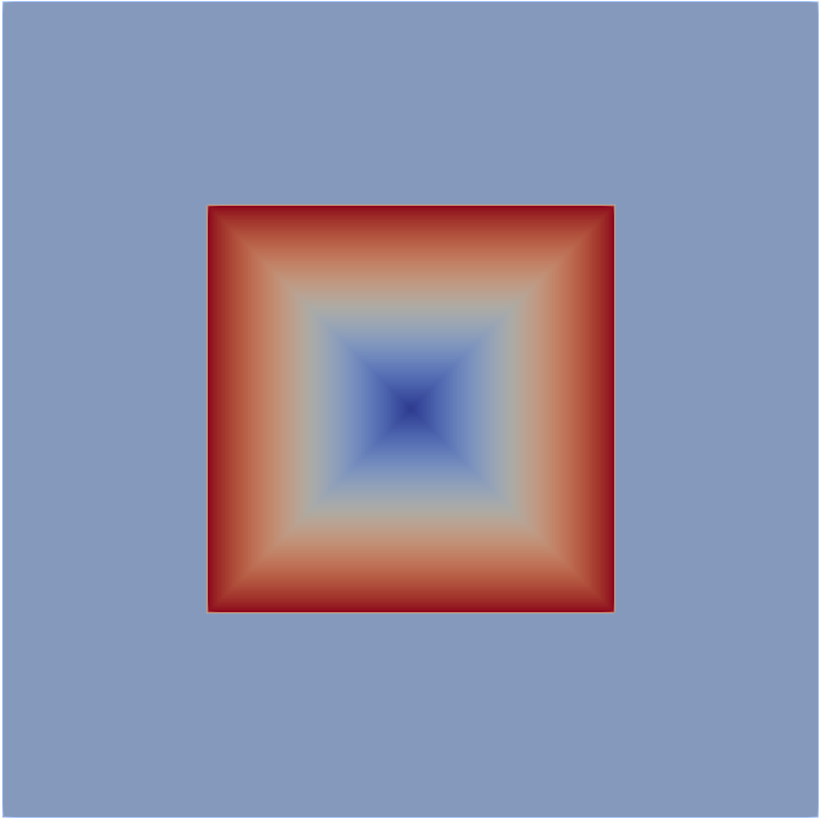}
        \caption{\tiny minima at center}\label{fig:sctws5}
    \end{subfigure}
        \hfill
    \begin{subfigure}[t]{0.3\textwidth}
        \centering
        \includegraphics[width=\textwidth]{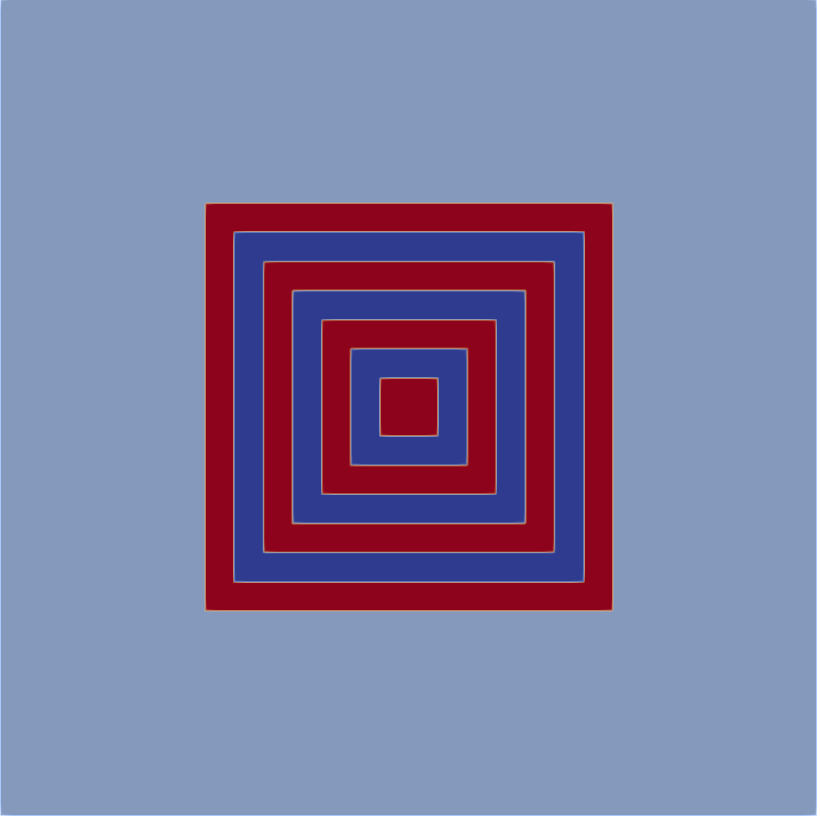}
        \caption{ oscillates}\label{fig:sctws6}
    \end{subfigure}
   \vspace{-.2cm}
    \caption{Different profiles for variable coefficients}\label{fig:ws}
\end{figure}

\noi
\begin{remarknonumber}[Are the coefficients $A$ and $n$ arising from the profiles in Figure \ref{fig:ws} trapping or nontrapping?]
We now describe to what extent it is known whether $A$ and $n$ arising from the profiles in Figure \ref{fig:ws} are trapping or nontrapping. We highlight, however, (recalling the discussion and references in Remark \ref{rem:trapping}) that even if $A$ or $n$ are trapping, we only expect to see the ``bad behaviour" at certain frequencies, and indeed we do not see the extreme ill-conditioning associated with trapping for any of the frequencies used  in the examples below.
The summary is that 
\bit
\item[(i)] Profile (c) is provably trapping for both $A$ and $n$. 
\item[(ii)] Profile (b) is provably trapping for $n$, and Profile (a) is provably trapping for $A$.
\item[(iii)] We expect Profile (a) to be trapping for $n$ and Profile (b) to be trapping for $A$ (but neither is rigorously proved).
\item[(iv)] We expect Profiles (d), (e), and (f) to be nontrapping for $n$ and $A$.
\eit

For understanding these  points, recall that red indicates a coefficient value $>1$, blue indicates a coefficient value $<1$, and the coefficient away from the obstacle equals $1$.

Regarding (i): when the coefficients jump on a smooth convex interface, the problem is trapping if $n$ jumps \emph{down} (moving outwards radially from the centre) or $A$ jumps \emph{up}; see \cite{PoVo:99}, \cite[Section 6]{MoSp:17}. 

Regarding (ii):   When $n$ is a radial function that jumps \emph{down} (moving outwards from the centre) on a circular interface from a linear function to a constant, the problem is trapping by \cite[\S6.2]{Mo:19}, \cite{BaDaMo:19}, similarly when $A$ is a radial function that jumps \emph{up}. 

Regarding (iii): when $n$ is a continuous radial function that \emph{decreases} linearly and $n_{\max}$ is sufficiently large, then $n$ is trapping by \cite{Ra:71}, \cite[Theorem 7.7]{GrPeSp:19}. We therefore expect the linear decrease in Profile (a) to mean that this profile is trapping for $n$. We cannot immediately conclude this from \cite{Ra:71}, \cite[Theorem 7.7]{GrPeSp:19}, since Profile (a) is discontinuous; however, because of the localisation of trapped waves we expect the subsequent nontrapping jump of $n$ not to affect the trapping caused by the linear decrease).
 Similarly, when $A$ is a continuous radial function that \emph{increases} linearly and $A_{\max}$ is sufficiently large we expect Profile (b) to be trapping for $A$.
 
Regarding (iv): it is not yet rigorously known whether Profiles (d), (e), and (f) are trapping or nontrapping for $A$ or $n$. Indeed, all the examples of trapping described above rely on a trapped wave \emph{either} circling an interface that is either radial, or smooth and convex \cite{PoVo:99}, \cite[\S6.2]{Mo:19}, \cite{BaDaMo:19}, \emph{or} supported by a change in a radial coefficient \cite{Ra:71}, \cite[Theorem 7.7]{GrPeSp:19}. Certainly for Profile (f) we expect $A$ and $n$ to be nontrapping, since any wave moving parallel to one of the sides of the square interfaces loses energy when it hits the next side perpendicularly; thus long-lived waves moving parallel to square interfaces cannot exist.
%\ere
\end{remarknonumber}

In Tables \ref{tb:heteABSvaryn} and  \ref{tb:heteABSvaryA} we give  the performance of GMRES  for the six profiles above in the case   $\eps =k^{1.5}$. In Table \ref{tb:heteABSvaryn}, we fix $A=1$, $n_{\min} = 0.02$ and   $n_{\max} =50$,  while  in Table \ref{tb:heteABSvaryA}, we fix $n=1$,   $A_{\min} = 0.02$ and $A_{\max} =50$. 
We see that while the iteration counts  are affected by variation in  $A$, there seems no effect from the variation in $n$. In fact the performance in Table \ref{tb:heteABSvaryn} is similar to the homogeneous case $A = n = 1$
(Table \ref{tb:polyEps}).  
  The worst cases in Table \ref{tb:heteABSvaryA} are for oscillating $A$ (columns Fig \ref{fig:sctws3} and Fig \ref{fig:sctws6}),
  while $A$ increasing outwards is worse than $A$ decreasing outwards. Neverthess it does appear that iteration numbers in Table \ref{tb:heteABSvaryA} are not increasing significantly with $k$.

  This is a case where the field of values plots do give some indication of  convergence rates.    In Figure   \ref{fig:fov-heteABS}
  we plot the field of values for the two cases of Tables \ref{tb:heteABSvaryn} and  \ref{tb:heteABSvaryA} with $k = 120$ ($n$ varying on the left and $A$ varying on the right). In this case $k/\eps \delta \sim k^{-0.1} \rightarrow 0$, so  Corollary \ref{cor:matrices}  ensures that, for $k$ large enough (relative to $p$ and $\cClocal(A,n)$),  the field of values  does not include the origin.
  % \esdel{Some indication for this can be found by examining the estimate
  % {eq:contrast} for $\sigma$ (which has to be small enough)} \ednote{Euan says: I don't understand what this last sentence is saying on top of Corollary \ref{cor:matrices} (what is ``this"?) so I've removed it for now. Happy to have it explained to me of course.}
When  $n$ is  varying and not $A$, $\cClocal(A,n)$ 
  just depends on $n_{\mathrm{min}}$ whereas when  $A$ is varying and not $n$ it depends on both $A_{\mathrm{min}}$  and $A_{\mathrm{max}}$. 
  However we also  tested the case of $n_{\max}=10^4$, $n_{\min}=10^{-4}$ and $A=1$, in which case the iteration counts are 
  similar to those  in Table \ref{tb:heteABSvaryn}.
  
\begin{table}[H]
\setlength\extrarowheight{2pt} % for a bit of visual "breathing space"
\centering
\begin{tabularx}{.9\textwidth}{C|CCCCCC}
\hline
\hline
$k\backslash n$	&Fig \ref{fig:sctws1}	&Fig \ref{fig:sctws2}	&Fig \ref{fig:sctws3}	&Fig \ref{fig:sctws4}	&Fig \ref{fig:sctws5}	&Fig \ref{fig:sctws6}\\
\hline
40	&13 	&13 	&13 	&13 	&13 	&13 \\
80	&12 	&12 	&12 	&12 	&12 	&12 	\\
120	&12 	&12 	&12 	&12 	&12 	&12 	\\
160	&12 	&12 	&12 	&12 	&12 	&12 	\\
\hline
	\hline
\end{tabularx}
\caption{Experiment \ref{exp:hete}: SORAS, $\eps=k^{1.5}, A = 1, n_{\min}=0.02, n_{\max} = 50$}\label{tb:heteABSvaryn}
\end{table}

\begin{table}[H]
\setlength\extrarowheight{2pt} % for a bit of visual "breathing space"
\centering
\begin{tabularx}{.9\textwidth}{C|CCCCCC}
\hline
\hline
$k\backslash A$	&Fig \ref{fig:sctws1}	&Fig \ref{fig:sctws2}	&Fig \ref{fig:sctws3}	&Fig \ref{fig:sctws4}	&Fig \ref{fig:sctws5}	&Fig \ref{fig:sctws6}\\
\hline
40	&20 	&27 	&47 	&20 	&29 	&41 \\
80	&17 	&30 	&51 	&18 	&27 	&46 	\\
120	&21 	&30 	&54 	&21 	&30 	&50 	\\
160	&17 	&26 	&38 	&19 	&28 	&46 	\\
\hline
	\hline
\end{tabularx}
\caption{Experiment \ref{exp:hete}: SORAS, $\eps=k^{1.5}, n = 1, A_{\min}=0.02, A_{\max} = 50$}\label{tb:heteABSvaryA}
\end{table}

\begin{figure}[t]
    \centering
\begin{subfigure}[t]{0.5\textwidth}
        \centering
        \includegraphics[width=.9\textwidth]{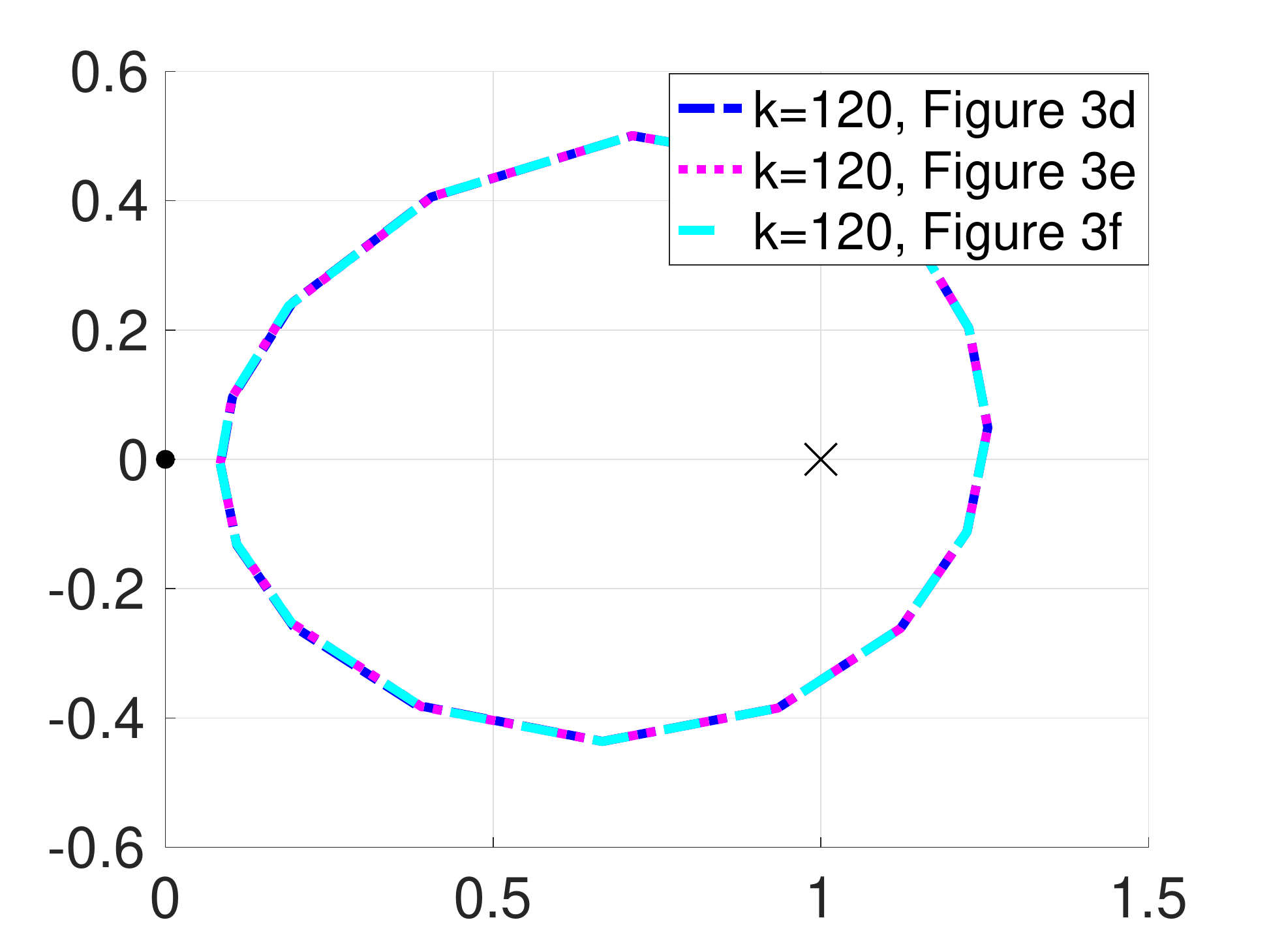}
        \caption{$\eps =k^{1.5}$, $A= 1, n_{\min}=0.02, n_{\max} = 50$}\label{fig:FovHeteEps1_5n50}
    \end{subfigure}%
\hfill     
    \begin{subfigure}[t]{0.5\textwidth}
        \centering
        \includegraphics[width=.9\textwidth]{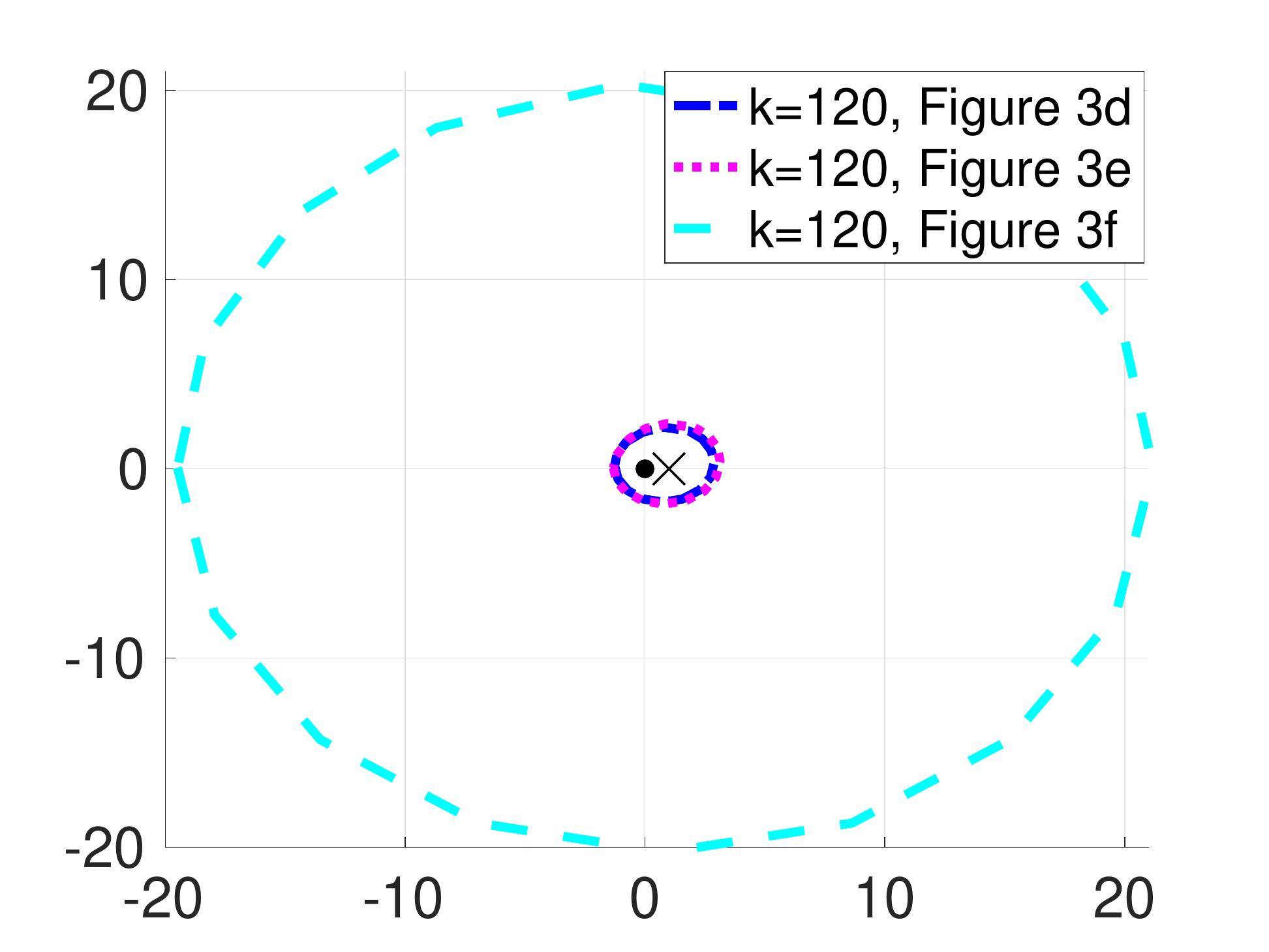}
        \caption{$\eps =k^{1.5}$, $n = 1, A_{\min}=0.02, A_{\max} = 50$}\label{fig:FovHeteEps1_5A50}
         \end{subfigure}%  
    \caption{Experiment \ref{exp:hete}: field of values  of SORAS preconditioned matrix,   heterogeneous case with square obstacle, $\eps =k^{1.5}$}\label{fig:fov-heteABS}
\end{figure}

\noi Without absorption, the performance of GMRES becomes sensitive to variation in  both  $A$ and $n$. For a harder problem, in Tables \ref{tb:heteVaryn2}-\ref{tb:heteVaryA2}, we set the quantity $\max(A_{\max}, n_{\max})/(\min(A_{\min}, n_{\min}))$ to  $4$ and let either $A$ or $n$ vary,
 keeping the other fixed. The performance is worst in the cases of oscillating coefficients, while in the other cases the performance as $k$ increases is still very reasonable.
 
In Tables \ref{tb:heteVaryn4}-\ref{tb:heteVaryA4}, we increase  the quantity $\max(A_{\max}, n_{\max})/(\min(A_{\min}, n_{\min}))$ to $16$. The way the variation of $A$ affects the performance of the preconditioners does not change.
But there is a big increase in GMRES iterations when the range of  $n$ gets bigger.
This is reflected in  our estimates for the pure Helmholtz problem $\eps = 0$, for which the approximation of local problems (given by the estimate \eqref{eq:local-approx2}) deteriorates as $n_{\max}$ increases.
We also plot  the field of values for the cases in Tables \ref{tb:heteVaryn4}-\ref{tb:heteVaryA4}.
The case of oscillating $A$ produces a  much larger fields of values, while varying  $n$ does not change the boundary of the field of values much.

\begin{table}[H]
\setlength\extrarowheight{2pt} % for a bit of visual "breathing space"
\centering
\begin{tabularx}{.9\textwidth}{C|CCCCCC}
\hline
\hline
$k\backslash n$	&Fig \ref{fig:sctws1}	&Fig \ref{fig:sctws2}	&Fig \ref{fig:sctws3}	&Fig \ref{fig:sctws4}	&Fig \ref{fig:sctws5}	&Fig \ref{fig:sctws6}\\
\hline
40	&18 	&21 	&24 	&18 	&19 	&28 \\
80	&22 	&26 	&39 	&20 	&21 	&30 	\\
120	&27 	&34 	&50 	&24 	&24 	&26 \\
160	&29 	&37 	&64 	&25 	&25 	&38 \\
\hline
	\hline
\end{tabularx}
\caption{Experiment \ref{exp:hete}:  SORAS $\eps=0, A = 1, n_{\min}=0.5, n_{\max} = 2.0$}\label{tb:heteVaryn2}
\end{table}

\begin{table}[H]
\setlength\extrarowheight{2pt} % for a bit of visual "breathing space"
\centering
\begin{tabularx}{.9\textwidth}{C|CCCCCC}
\hline
\hline
$k\backslash A$	&Fig \ref{fig:sctws1}	&Fig \ref{fig:sctws2}	&Fig \ref{fig:sctws3}	&Fig \ref{fig:sctws4}	&Fig \ref{fig:sctws5}	&Fig \ref{fig:sctws6}\\
\hline
40	&18 	&18 	&20 	&18 	&18 	&21 \\
80	&21 	&18 	&38 	&18 	&18 	&28 	\\
120	&31 	&21 	&35 	&21 	&20 	&29 \\
160	&32 	&22 	&47 	&23 	&21 	&33 \\
\hline
	\hline
\end{tabularx}
\caption{ Experiment \ref{exp:hete}: SORAS, $\eps=0, n = 1, A_{\min}=0.5, A_{\max} = 2.0$}\label{tb:heteVaryA2}
\end{table}

\begin{table}[H]
\setlength\extrarowheight{2pt} % for a bit of visual "breathing space"
\centering
\begin{tabularx}{.9\textwidth}{C|CCCCCC}
\hline
\hline
$k\backslash n$	&Fig \ref{fig:sctws1}	&Fig \ref{fig:sctws2}	&Fig \ref{fig:sctws3}	&Fig \ref{fig:sctws4}	&Fig \ref{fig:sctws5}	&Fig \ref{fig:sctws6}\\
\hline
40	&22 	&26 	&38 	&21 	&25 	&32\\
80	&35 	&47 	&44 	&31 	&28 	&61\\
120	&54 	&56 	&61 	&41 	&39 	&67 \\
160	&61 	&58 	&55 	&40 	&39 	&59\\
\hline
	\hline
\end{tabularx}
\caption{Experiment \ref{exp:hete}: SORAS  $\eps=0, A = 1, n_{\min}=0.25, n_{\max} = 4.0$}\label{tb:heteVaryn4}
\end{table}

\begin{table}[H]
\setlength\extrarowheight{2pt} % for a bit of visual "breathing space"
\centering
\begin{tabularx}{.9\textwidth}{C|CCCCCC}
\hline
\hline
$k\backslash A$	&Fig \ref{fig:sctws1}	&Fig \ref{fig:sctws2}	&Fig \ref{fig:sctws3}	&Fig \ref{fig:sctws4}	&Fig \ref{fig:sctws5}	&Fig \ref{fig:sctws6}\\
\hline
40	&19 	&20 	&23 	&19 	&19 	&27 \\
80	&24 	&21 	&48 	&19 	&21 	&40 \\
120	&29 	&26 	&49 	&25 	&21 	&56 \\
160	&28 	&24 	&57 	&25 	&20 	&57 \\
\hline
	\hline
\end{tabularx}
\caption{ Experiment \ref{exp:hete}: SORAS,  $\eps=0, n = 1, A_{\min}=0.25, A_{\max} = 4.0$}\label{tb:heteVaryA4}
\end{table}

To conclude, the observations in the heterogeneous case are: 
\begin{itemize}
\item In the case of absorption, the performance is mainly affected by the variation of $A$, but only weakly affected by $n$;
\item In the case without absorption,  the performance is sensitive to the variation of both $A$ and $n$.
  Increasing the range of $n$ now seems to increase the iteration count more strongly than increasing the range of $A$.
\end{itemize}
 
\begin{figure}[H]
    \centering
 \begin{subfigure}[t]{0.5\textwidth}
        \centering
        \includegraphics[width=.9\textwidth]{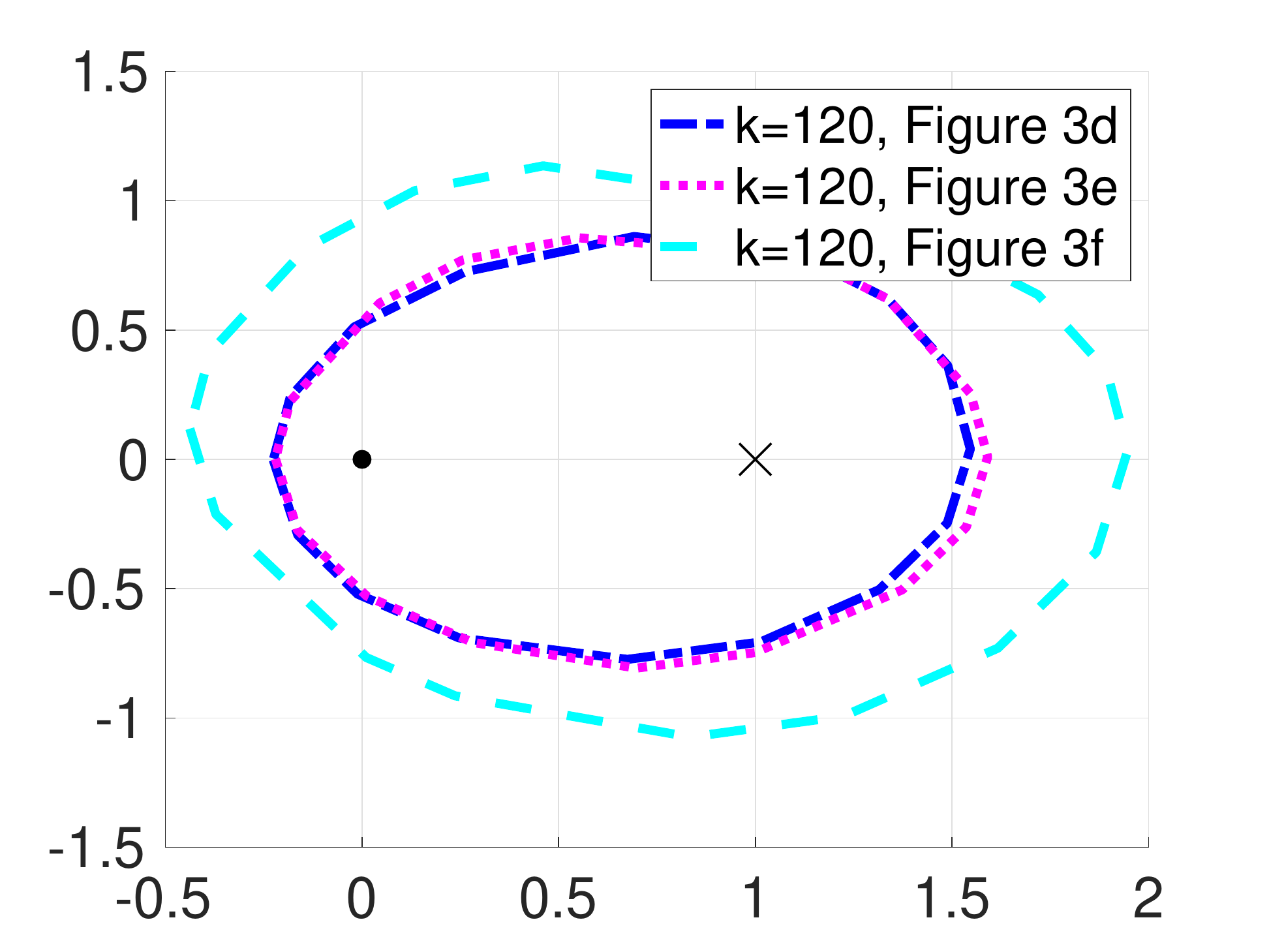}
        \caption{$\eps =0$, $A = 1, n_{\min}=0.25, n_{\max} = 4$}\label{fig:FovHeteEps0n4}
      \end{subfigure}%
      \hfill 
    \begin{subfigure}[t]{0.5\textwidth}
        \centering
        \includegraphics[width=.9\textwidth]{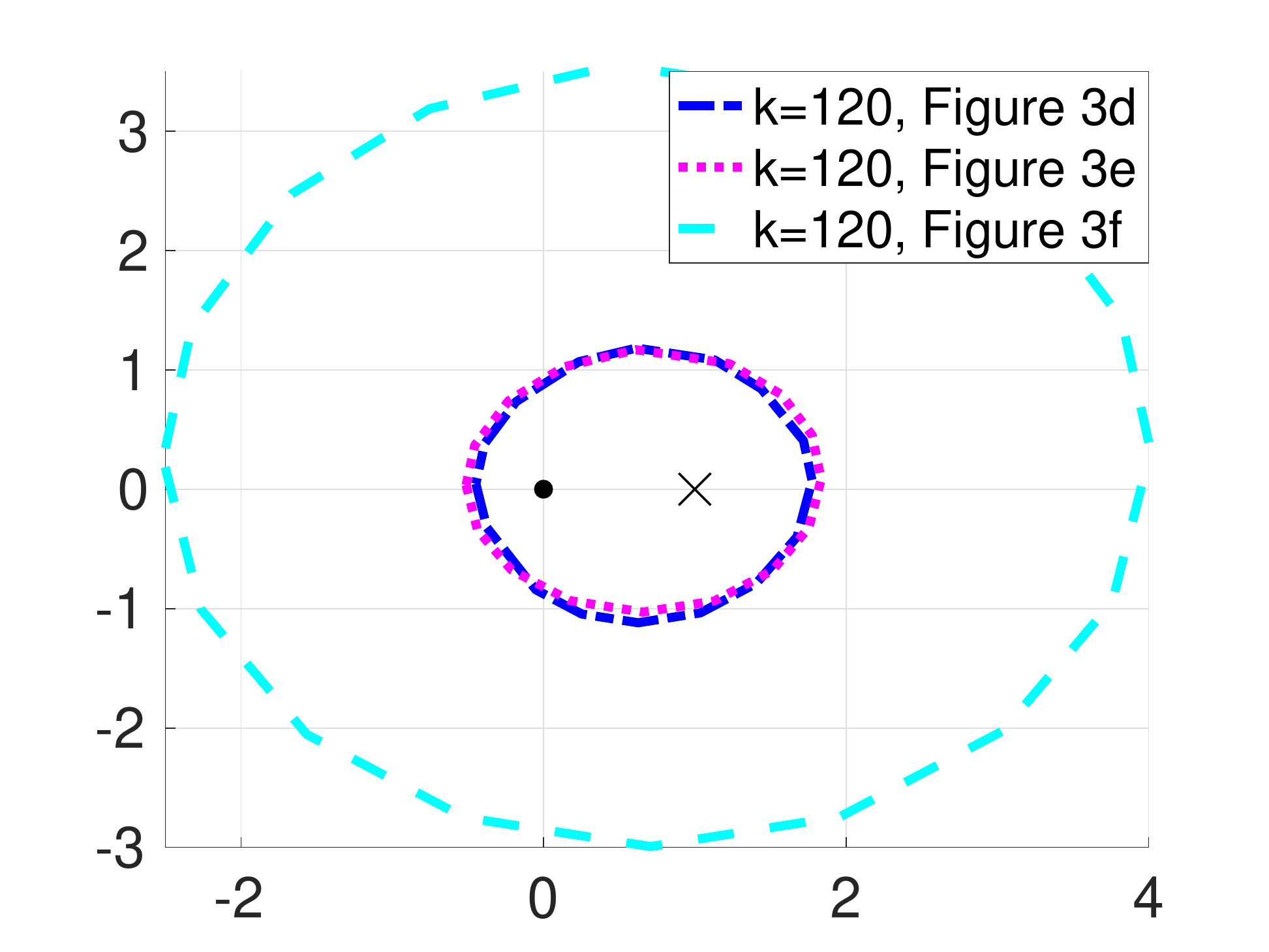}
        \caption{$\eps =0$, $n = 1, A_{\min}=0.25, A_{\max} = 4$}\label{fig:FovHeteEps0A4}
         \end{subfigure}%
   \caption{Experiment \ref{exp:hete}: SORAS,  field of values in the heterogeneous case with square obstacle, $\eps =0$}\label{fig:fov-hete}
\end{figure}

\begin{experiment}[Effect of local variation of the heterogeneity.]
\label{exp:localhete}

\

\noi Linear system setting: $$\text{variable }A, n, \text{ and }  p=3$$
Preconditioner setting: $$\text{Partition strategy 2 with } H=\frac{1}{8}, \delta=\frac{H}{4}$$
\end{experiment}
In Experiment  \ref{exp:hete}, the number of the subdomains  is  $4\times 4$, $5\times 5$, $6\times 6$ and $7\times 7$ for the cases  $k=40,80,120$ and $160$, respectively.  Thus there  always exist subdomains where both global maximum and minimum coefficient values are attained
for any of the   profiles in Figure \ref{fig:ws}. Here we use smaller subdomain sizes  (in fact $8\times 8$)
in order to  illustrate   the  local heterogeneity dependence identified in the theory (Corollary \ref{cor:matrices}). In the case of the  linear variation profiles
Figures \ref{fig:sctws1}, \ref{fig:sctws2}, \ref{fig:sctws4}, \ref{fig:sctws5},
both  the maximum and minimum values of the varying coefficients cannot be attained in a single subdomain.
 For the oscillating profiles \ref{fig:sctws3}, \ref{fig:sctws6}, however, there  always exist  some subdomains  capturing both extreme values. Therefore Profiles  \ref{fig:sctws3}, \ref{fig:sctws6} have worse estimates for the local contrast
$\cClocal(A,n)$ than the others and hence yield worse estimates for the field of values. In Tables \ref{tb:heteVaryA4FixH}-\ref{tb:heteVaryn4FixH} we present the results for these  profiles with $\eps =0$; we see that Profiles \ref{fig:sctws3}, \ref{fig:sctws6} have noticeably worse iteration counts  than the others.

\begin{table}[H]
\setlength\extrarowheight{2pt} % for a bit of visual "breathing space"
\centering
\begin{tabularx}{.9\textwidth}{C|CCCCCC}
\hline
\hline
$k\backslash A$	&Fig \ref{fig:sctws1}	&Fig \ref{fig:sctws2}	&Fig \ref{fig:sctws3}	&Fig \ref{fig:sctws4}	&Fig \ref{fig:sctws5}	&Fig \ref{fig:sctws6}\\
\hline
40	&40 (28)	&41 (30)	&52 (33)	&42 (28)	&40 (30)	&64 (43)\\
80	&46 (38)	&39 (29)	&103 (88)	&38 (27)	&36 (28)	&66 	(59)\\
120	&42 (37)	&32 (27)	&59 (61)	&31 (25)	&26 (23)	&68 (73)\\
160	&35 (34)	&31 (25)	&73 (96)	&27 (23)	&24 (27)	&85 (103)\\
\hline
	\hline
\end{tabularx}
\caption{Fixed subdomains, $\eps=0, n = 1, A_{\min}=0.25, A_{\max} = 4.0$}\label{tb:heteVaryA4FixH}
\end{table}

\begin{table}[H]
\setlength\extrarowheight{2pt} % for a bit of visual "breathing space"
\centering
\begin{tabularx}{.9\textwidth}{C|CCCCCC}
\hline
\hline
$k\backslash n$	&Fig \ref{fig:sctws1}	&Fig \ref{fig:sctws2}	&Fig \ref{fig:sctws3}	&Fig \ref{fig:sctws4}	&Fig \ref{fig:sctws5}	&Fig \ref{fig:sctws6}\\
\hline
40	&48 (32)	&63 (54)	&73 (72)	&49 (38)	&57 (43)	&75 (56)\\
80	&58 (51)	&75 (70)	&79 (84)	&52 (44)	&50 (40)	&103 	(102)\\
120	&73 (66)	&74 (90)	&90 (132)	&52 (44)	&45 (39)	&102 (124)\\
160	&75 (82)	&69 (87)	&100 (124)	&48 (44)	&41 (36)	&59 (73)\\
\hline
	\hline
\end{tabularx}
\caption{Fixed subdomains, $\eps=0, A = 1, n_{\min}=0.25, n_{\max} = 4.0$}\label{tb:heteVaryn4FixH}
\end{table}

\noi{\bf Acknowledgement:\ } 
We thank Victorita Dolean (University of Strathclyde and Universit\'e C\^{o}te d'Azur) for useful mathematical discussions and also  advice on FreeFEM++.
We gratefully acknowledge support from the UK Engineering and Physical Sciences Research Council  Grants EP/R005591/1 (EAS) and   EP/S003975/1
(SG, IGG, and EAS). 
This research made use of the Balena High Performance Computing (HPC) Service at the University of Bath.
\igg{We thank the anonymous  referees for making several penetrating remarks which have improved the manuscript.} 

\footnotesize{
\bibliographystyle{plain}
\bibliography{biblio_epsilon3}
%\bibliography{../note/Solvers4Helmholtz.bib} 
}

\end{document}